\newcommand{\Z}{\ensuremath{{\mathbb{Z}}}\xspace}
\renewcommand{\P}{\ensuremath{{\mathbb{P}}}}
\newcommand{\Q}{\ensuremath{{\mathbb{Q}}}}
\newcommand{\R}{\ensuremath{{\mathbb{R}}}}
\newcommand{\ra}{\rightarrow}
\newcommand\Det{\operatorname{Det}}
\newcommand\Hom{\operatorname{Hom}}
\newcommand\sHom{\operatorname{\mathcal{H}om}}
\newcommand\im{\operatorname{im}}
\newcommand\Sym{\operatorname{Sym}}
\newcommand\tensor{\otimes}
\newcommand\isom{\cong}
\newcommand\sub{\subset}
\newcommand\tesnor{\otimes}
\newcommand\wn{\wedge^n}
\newcommand\disc{\operatorname{disc}}
\newcommand\GL{\operatorname{GL}}
\newcommand\Spec{\operatorname{Spec}}
\newcommand\Proj{\operatorname{Proj}}
\newcommand\wt{\wedge^2}
\renewcommand\O{\mathcal{O}}
\newcommand\BS{\ensuremath{S}\xspace}
\newcommand\fIf{\ensuremath{{\mathcal{I}_f}}\xspace}
\newcommand\OS{\ensuremath{{\O_\BS}}\xspace}
\newcommand\bij[2]{\ensuremath{\left\{\parbox{2.5 in}{#1}\right\} \longleftrightarrow \left\{\parbox{2.5 in}{#2}\right\}}}
\newcommand\bq{\begin{equation}}
\newcommand\eq{\end{equation}}
\newtheorem{proposition}{Proposition}[section]
\newtheorem{theorem}[proposition]{Theorem}
\newtheorem{corollary}[proposition]{Corollary}
\newtheorem{lemma}[proposition]{Lemma}
\theoremstyle{remark}
\renewcommand{\R}{\ensuremath{{\mathcal{R}}}}
\newcommand{\M}{\mathcal{M}}
\newcommand{\I}{\mathcal{I}}
\newenvironment{definition}{\vspace{2 ex}{\noindent{\bf Definition. }}}{\vspace{2 ex}}
\newenvironment{notation}{\vspace{2 ex}{\noindent{\bf Notation. }}}{\vspace{2 ex}}
\title{Parametrization of ideal classes in rings associated to binary forms}
\author{Melanie Matchett Wood\thanks{email:mwood@math.stanford.edu}\\
American Institute of Mathematics and Stanford University}
\begin{document}

\maketitle

\abstract{We give a parametrization of the ideal classes of rings associated to integral binary forms by
classes of tensors in $\Z^2\tensor \Z^n\tensor \Z^n$.  This generalizes Bhargava's work on Higher Composition Laws, which gives such parametrizations in the cases $n=2,3$.  
We also obtain parametrizations of 2-torsion ideal classes by symmetric tensors.  Further, we give versions of these theorems when $\Z$ is replaced by an arbitrary base scheme $S$, and geometric constructions of the modules from the tensors in the parametrization.}

\section{Introduction}\label{intro}

The goal of this paper is to give a parametrization of ideal classes in the rings associated to
binary $n$-ic forms.  
Every integral binary form of degree $n$ has a ring of rank $n$ (a ring isomorphic to $\Z^n$ as a $\Z$-module) associated to it (see, for example, \cite{Naka}, \cite{Simon}, \cite{Bf}).  When the form is irreducible over $\Q$, the associated ring is an order in a degree $n$ number field.
For $n=2$, the  ring associated to a binary quadratic form is just the quadratic ring of the same discriminant.
For $n=3$, binary cubic forms parametrize cubic rings exactly (\cite{DF}, \cite{GGS}).  
For $n>3$ not all rank $n$ rings (or orders in degree $n$ number fields) are associated to a binary $n$-ic form.  
The special orders which are associated to binary $n$-ic forms have been studied as a natural generalization of monogenic orders in \cite{Simon}, and in \cite{PrimeSplit} 
it is found that prime splitting in these orders can be understood simply in terms of the factorization of the form modulo the prime.
In  \cite{Bf} it is shown that binary $n$-ic forms parametrize rank $n$ rings that have an ideal class with certain special structure.
When $n=4$, there is another particularly simple description given in 
\cite{BinQuartic}; that is, binary quartic forms parametrize quartic rings with monogenic cubic resolvents.  The cubic resolvent
of of a quartic ring is an integral model of the classical cubic resolvent field, and was introduced in \cite{HCL3} to parametrize quartic rings.
Thus the orders associated to binary forms are interesting examples of orders in number fields with various nice properties.  In this paper, we parametrize the ideal classes of these orders by classes of tensors.

There are very few classes of orders whose ideal classes have been parametrized.  
When $n=2$, ideal classes of quadratic rings are parametrized by binary quadratic forms, originally by 
Dedekind and Dirichlet \cite{DD} (see also \cite{HCL1} for another parametrization of ideal classes in this case).  Bhargava \cite{HCL2} has found a parametrization of ideal classes of cubic rings.  This is his space of
2 by 3 by 3 boxes of integers, or classes of elements of $\Z^2\tensor \Z^3\tensor \Z^3$.  
The classical parametrization of ideal classes of quadratic rings by binary quadratic forms has had very important applications in number theory,
from genus theory to the computation of class groups of quadratic fields.  Bhargava's parametrization of ideal classes of cubic rings
is much more recent, but already it can be applied to find the average size of the 2-torsion part of the class group of cubic orders, 
both for maximal and non-maximal orders (see forthcoming work of Bhargava).
In this paper, we prove that classes of elements of 
$\Z^2\tensor \Z^n\tensor \Z^n$ parametrize ideal classes of the rings associated to binary $n$-ic forms for all $n$.
This gives an explicit parametrization of class groups of a natural infinite family of orders in rank $n$ number fields.
When $n=2,3$, these are the results of Bhargava in \cite{HCL1}, \cite{HCL2}.  

One can also study symmetric elements of
$\Z^2\tensor \Z^n\tensor \Z^n$, that is elements of $\Z^2\tensor \Sym_2 \Z^n$.  We relate these symmetric tensors to the 2-part of the class group
of rings associated to binary $n$-ic forms, just as in the cases $n=2,3$ in \cite{HCL1}, \cite{HCL2}.  Morales 
(\cite{Morales}, \cite{Morales2}) has also studied elements of $\Z^2\tensor \Sym_2 \Z^n$ and associated modules to them that are related to the $2$-part of certain class groups, though
he associates modules for a slightly different ring than that in our work.  Hardy and Williams
\cite{HW} have also given class number formulas counting elements of $\Z^2\tensor \Sym_2 \Z^2$ of a given discriminant.

In addition, this paper gives analogous results when the integers are replaced by an arbitrary base scheme $S$ 
(or base ring when $S=\Spec R$), and so we also generalize the results from \cite{HCL1} and \cite{HCL2} from the integers to an arbitrary base.  In Morales's work \cite{Morales}, he has replaced $\Z$ by an arbitrary maximal order in a number field in his constructions of modules
from symmetric tensors.  

In this paper, we give both algebraic and geometric constructions for the modules associated to an element of  $\Z^2\tensor \Z^n\tensor \Z^n$.
The algebraic construction is given by explicit formulas for the action of the ring elements on a $\Z$-basis for an ideal.  The geometric construction
gives the modules as sections of line bundles of schemes naturally given by the tensor, and is quite simple for nice tensors.  

As this paper generalizes the results of \cite{HCL2} from $\Z$ to an arbitrary base, it could be applied
to counting problems in number theory over arbitrary orders in number fields or function fields, specifically to finding the average of the
the 2-part of the class group of orders that are cubic over a fixed order.
For $n> 3$, the associated counting problems for elements of $\Z^2\tensor \Z^n\tensor \Z^n$ are much harder, and involve reduction theory problems at the boundary of current research.
Given how little is known about the average size of 2-parts of class groups in any natural infinite family of degree $n$ number fields, the application
of this paper and new work for the reduction theory  of $\Z^2\tensor \Z^n\tensor \Z^n$ has 
 potentially very interesting results.  

This work can also be interpreted geometrically, because a rank $n$ algebra over a scheme $S$ is equivalent to a finite, flat degree $n$ cover of $S$.  In those terms, this work explicitly constructs the moduli space of line bundles (and degenerations) on certain finite covers.  In the case $n=3$, this includes all cubic covers.  In forthcoming work of the author and Erman, this paper is applied over $S=\P^1$ to study explicitly the universal Jacobian of the Hurwitz stack of trigonal curves.  The explicit description of the moduli space provided by this paper gives a proof of unirationality of the space as well as a framework for computing its Picard group  and possibly its Chow ring.

\subsection{Outline of results}

We can represent an element of the space $\Z^2\tensor \Z^n\tensor \Z^n$ as a pair $A=(A_1,A_2)$ of $n$ by $n$
matrices.  Let $\Det(A)$ be the binary $n$-ic form $\Det(A_1x+A_2y)$.  For a non-zero integral binary form $f$, let $R_f$ be the ring
of global functions of the subscheme of $\P^1_\Z$ cut out by $f$.  (The same ring, via other constructions, is associated to $f$ in 
\cite{PrimeSplit}, \cite{Naka}, \cite{Simon}, and \cite{Bf} and reviewed in Section~\ref{S:binf} of the current paper.)  
A binary form $f$ is \emph{non-degenerate} if it has non-zero discriminant.
Let $G$ be the subgroup of elements $(g_1,g_2)$ of $\GL_n(\Z) \times \GL_n(\Z)$ such that $\Det(g_1)\Det(g_2)=1$.
Over the integers, we have the following version of our main theorem which makes some assumptions on $f$ for simplicity of statement.  (This version follows from Theorem~\ref{T:bijZ} using 
Propositions~\ref{P:eqb}, \ref{P:charfrac}, and \ref{P:primu}.)

\begin{theorem}\label{T:specbijI}
For a primitive non-degenerate binary $n$-ic form $f$, there is a bijection between $G$-classes of $A\in \Z^2\tensor \Z^n\tensor \Z^n$ such that $\Det(A)=f$ and \emph{(}not necessarily invertible\emph{)} ideal classes of $R_f$. 
\end{theorem}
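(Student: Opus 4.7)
The plan is to construct the bijection geometrically, using the interpretation of a tensor $A=(A_1,A_2)\in \Z^2\tensor \Z^n\tensor \Z^n$ as a morphism of sheaves
$$\phi_A\colon \O_{\P^1_\Z}(-1)^n \xrightarrow{A_1 x+A_2 y} \O_{\P^1_\Z}^n$$
on $\P^1_\Z$, whose determinant line-bundle map is precisely the binary form $\Det(A)=f$.

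For the forward direction, given $A$ with $\Det(A)=f$, I form $\phi_A$ and take its cokernel $M_A$. Because $f$ is primitive and non-degenerate, $f$ is a non-zerodivisor on $\P^1_\Z$, so Cramer's rule forces $\phi_A$ to be injective. Hence $M_A$ is annihilated by $\det \phi_A=f$ and is therefore a coherent sheaf on $X_f:=V(f)=\Spec R_f$, which makes it an $R_f$-module. A rank count on the generic fibre shows $M_A$ is rank-one and torsion-free, i.e.\ a fractional ideal of $R_f$. Two tensors $A,A'$ are $G$-equivalent precisely when they differ by compatible changes of $\Z$-bases on the source and target of $\phi_A$, which give isomorphic cokernels; the determinant condition $\Det(g_1)\Det(g_2)=1$ is exactly what preserves $\Det(A)=f$. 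Thus the construction descends to a well-defined map from $G$-classes of tensors to ideal classes of $R_f$.

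For the reverse direction, given a rank-one torsion-free $R_f$-module $I$, I push it forward to a coherent sheaf on $\P^1_\Z$ via the closed immersion $X_f\hookrightarrow \P^1_\Z$. Primitivity of $f$ forces $X_f\to \Spec \Z$ to be finite flat of degree $n$, so $I$ is free of rank $n$ over $\Z$. I then build a resolution
$$0\to \O_{\P^1_\Z}(-1)^n \to \O_{\P^1_\Z}^n \to I \to 0,$$
whose middle map is a matrix of linear forms $A_1 x + A_2 y$, yielding the desired tensor. For such a resolution to exist one needs $H^0(\P^1_\Z,I(-1))=0$ together with a surjection $H^0(\P^1_\Z,I)\tensor_\Z \O_{\P^1_\Z}\twoheadrightarrow I$; both conditions can be arranged by choosing a suitable representative of the ideal class (replacing $I$ by an appropriate principal multiple). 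Rank and Euler-characteristic considerations on $\P^1_\Z$ then force the kernel of the surjection to be $\O_{\P^1_\Z}(-1)^n$.

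The two constructions are inverse on representatives by their very shape: the canonical resolution of $M_A$ recovers $\phi_A$, and the resolution used in the reverse direction has cokernel $I$. The main obstacle I expect, and where the hypotheses on $f$ must be used most carefully, is verifying that every ideal class admits a representative whose pushforward to $\P^1_\Z$ fits into a resolution of the prescribed shape, and that any two such representatives differ exactly by the $G$-action. This normalization step aligns the intrinsic $R_f$-module structure with the $\P^1_\Z$-grading that singles out the form of the resolution; primitivity (giving flatness of $X_f\to\Spec \Z$) and non-degeneracy (controlling the generic fibre) together are exactly what make this alignment possible and determine it uniquely modulo the permitted ambiguity.
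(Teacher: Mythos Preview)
Your approach is genuinely different from the paper's and is viable in outline, but the reverse direction contains an actual error. The paper does not use resolutions on $\P^1_\Z$ at all: it first proves (Theorem~\ref{T:bijZ}) a bijection between $G$-classes of tensors and isomorphism classes of \emph{balanced pairs} $(M,N)$ equipped with an $R_f$-map $M\tensor_{R_f}N\to I_f$, via an explicit algebraic construction of the $R_f$-action (your cokernel, after conjugating by $A_1$, is the paper's $N$). It then shows that for non-degenerate $f$ every balanced module is a fractional ideal (Proposition~\ref{P:charfrac}) and that for primitive $f$ each $N$ has a unique balancing partner $M$ (Proposition~\ref{P:primu}), so balanced pairs collapse to single ideal classes. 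Your route bypasses the pair formalism entirely, which is conceptually cleaner but trades the paper's elementary index computations for sheaf cohomology and base-change arguments.

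The gap: your stated condition $H^0(\P^1_\Z,\iota_*I(-1))=0$ is never satisfied. Since $X_f$ is affine, $H^0(\P^1_\Z,\iota_*I(-1))=\Gamma(X_f,I\tensor\O_{X_f}(-1))\cong\Z^n$ for every fractional ideal $I$. Moreover, replacing $I$ by a principal multiple does nothing: it does not change the isomorphism class of $\iota_*I$ as a sheaf on $\P^1_\Z$, so there is no normalization step available and none is needed. What you must actually prove is that for any fractional ideal $I$ (automatically $\Z$-free of rank $n$), choosing a $\Z$-basis gives a surjection $\O_{\P^1_\Z}^n\twoheadrightarrow\iota_*I$ whose kernel $K$ is isomorphic to $\O(-1)^n$. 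This holds, but requires an argument: $K$ is $\Z$-flat (since $\iota_*I$ is), and over each residue field $k$ of $\Spec\Z$ the kernel $K\tensor k$ is locally free on $\P^1_k$ of rank $n$, degree $-n$, with $H^0=0$, hence $\O(-1)^n$ by Grothendieck splitting; then cohomology and base change globalize. In the forward direction you should likewise verify that $M_A$ is $\Z$-free of rank $n$, not just check the generic fibre: pushing forward the resolution and using $\pi_*\O(-1)=R^1\pi_*\O(-1)=0$ gives $\pi_*M_A\cong\Z^n$ directly.
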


If $f$ is monic and $\Det(A)=f$, then $\Det(A_1)=1$.  We can then act by an element of $G$ so as to assume that
$A_1$ is the identity matrix.  Further $G$ action fixing $A_1$ (the identity matrix) is just conjugation of $A_2$.
If $f=F(x_1,x_2)$ is monic, then $R_f=\Z[\theta]/F(\theta,1)$, 
and Theorem~\ref{T:specbijI} generalizes the classical result
that ideal classes of monogenic orders correspond to conjugacy classes in $\Z^n\tensor \Z^n$ whose characteristic polynomial is
$F(t,-1)$.
So, we can view Theorem~\ref{T:specbijI} as placing rings associated to binary forms in analogy with monogenic rings,
as in \cite{PrimeSplit} and \cite{Simon}.

In the case $n=3$, Theorem~\ref{T:specbijI} is slightly stronger than the corresponding version in \cite{HCL2},
which gives a correspondence between $A$ associated to invertible ideals and invertible ideal classes of $R_f$.

As in \cite{HCL1,HCL2}, we must define a notion of balanced to state a more general theorem.
We will show in Section~\ref{S:balance} that there are several equivalent ways to formulate the notion of balanced, but we first give the one closest to Bhargava's notion in \cite{HCL1,HCL2}.  
For a non-zero form $f$, there is a naturally associated ideal $I_f$ of $R_f$.
This ideal class was constructed in \cite{Bf} and the construction is reviewed in Section~\ref{S:binf} of the current paper.
A \emph{balanced} pair of ideals (for $f$) is a pair $(M,N)$ of fractional $R_f$-ideals such that 
$MN\subset I_f$ and $|M||N|=|I_f|$, where $|P|$ denotes the norm of $P$.  Two such pairs $(M,N)$ and $(M',N')$ are in the same class if
$M'=\lambda M$ and $N'=\lambda^{-1} N$ for some invertible element 
$\lambda\in R_f\tensor_\Z \Q$.  Note that an invertible ideal has a unique balancing partner, and thus classes of balanced pairs of invertible ideals are the same as invertible ideal classes.

\begin{theorem}\label{T:IsameBhar}
For non-degenerate $f$, we have a bijection
$$
\bij{classes of balanced pairs $(M,N)$ for $f$}{$G$-classes of $A\in \Z^2 \tensor \Z^n \tensor \Z^n$ with $\Det(A)=f$}.
$$
\end{theorem}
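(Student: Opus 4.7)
The plan is to construct explicit maps in both directions and verify they descend to a well-defined bijection on equivalence classes.

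Starting from $A=(A_1,A_2)\in \Z^2\tensor\Z^n\tensor\Z^n$ with $\Det(A)=f$, I view the two factors of $\Z^n$ as free rank-$n$ $\Z$-modules $M$ and $N$. The tensor $A$ encodes a $\Z$-bilinear pairing $M\tensor_\Z N\to \Z^2$, which via the canonical identification of $\If$ with $\Z^2$ as a $\Z$-module (recalled in Section~\ref{S:binf}) becomes a multiplication $m:M\tensor_\Z N\to \If$. To lift $M$ and $N$ to $\Rf$-modules, I define the action of a distinguished $\Z$-algebra generator of $\Rf$ by an explicit formula in $A_1,A_2$ (essentially $-A_1^{-1}A_2$, appropriately rescaled to clear denominators), and verify using $\Det(A_1x+A_2y)=f$ that this endomorphism satisfies the defining polynomial relation of $\Rf$. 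Non-degeneracy of $f$ makes $\Rf\tensor\Q$ étale, so the construction extends uniquely to an $\Rf$-module structure; then $m$ is $\Rf$-bilinear, whence $MN\subset \If$, and the identity $|M||N|=|\If|$ is a determinant calculation.

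In the reverse direction, given a balanced pair $(M,N)$ with chosen $\Z$-bases, the multiplication $M\tensor_\Z N\to \If$ expanded in the canonical $\Z$-basis of $\If$ produces a tensor $A\in \Z^2\tensor\Z^n\tensor\Z^n$. The containment $MN\subset \If$ makes $A$ integral, and $|M||N|=|\If|$ forces $\Det(A)=f$ exactly (not merely up to a scalar). Changing $\Z$-bases of $M$ and $N$ by $g_1,g_2\in \GL_n(\Z)$ transforms $A$ in the obvious way, but also scales the induced identification of $\det M\tensor\det N$ with $\det \If$ by $\Det(g_1)\Det(g_2)$; preserving the fixed identification forces $(g_1,g_2)\in G$. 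A rescaling $(M,N)\mapsto (\lambda M,\lambda^{-1}N)$ by invertible $\lambda\in \Rf\tensor\Q$ acts through such $(g_1,g_2)\in G$ on bases, so the two equivalence relations match up.

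The main obstacle is verifying that the forward construction genuinely produces fractional $\Rf$-ideals, i.e.\ that the endomorphisms of $M$ and $N$ built from $A_1,A_2$ simultaneously satisfy the correct integer polynomial and assemble into compatible $\Rf$-actions making $m$ into $\Rf$-bilinear multiplication. The key point is that the single identity $\Det(A_1x+A_2y)=f$ carries two independent consequences: a Cayley--Hamilton style argument gives the well-definedness of the module structure from the characteristic polynomial, while a discriminant comparison yields the precise norm equality $|M||N|=|\If|$. Once this core identity is fully exploited, the remaining checks (that the two constructions are mutually inverse and that the equivalence classes correspond) become bookkeeping, secured by the rigidity of modules over the étale algebra $\Rf\tensor\Q$ provided by non-degeneracy of $f$.
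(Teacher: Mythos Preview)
Your overall strategy is the paper's: build the $R_f$-module structure on $M,N$ from $A$ using (roughly) $-A_1^{-1}A_2$ and Cayley--Hamilton, recover $A$ from the pairing in the other direction, and match up equivalence relations. But there is a genuine error that makes your construction, as written, fail.

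You repeatedly invoke a ``canonical identification of $\If$ with $\Z^2$ as a $\Z$-module.'' There is no such thing: $\If$ is free of rank $n$ over $\Z$, not rank $2$. What Section~\ref{S:binf} supplies is a $\Z$-linear map $\If\to V=\Z^2$ (via $\check\zeta_{n-1}$ and $-\check\zeta_{n-2}$), which is not an isomorphism for $n>2$. So the tensor $A$ only gives $M\tensor_\Z N\to\Z^2$, and lifting this to an $R_f$-module map into $\If$ is a real step, not a relabeling. The paper does it by first using Proposition~\ref{P:Jstruc} (the isomorphism $\Hom_{R_f}(N,J_f)\cong\Hom_\Z(N,\Z)$) to land in $J_f$, and then Lemma~\ref{L:inI} with Corollary~\ref{C:thet} to see the image actually lies in $\If\subset J_f$. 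Dually, in your reverse direction, expanding $M\tensor N\to\If$ in a $\Z$-basis of $\If$ would produce an element of $\Z^n\tensor\Z^n\tensor\Z^n$; to get back to $\Z^2\tensor\Z^n\tensor\Z^n$ one composes with $\If\to V$, and one needs Proposition~\ref{P:Istruc} (injectivity of $\Hom_{R_f}(P,\If)\to\Hom_\Z(P,V)$) to know nothing is lost.

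Two further gaps. First, non-degeneracy does not force $f_0\neq0$ (take $f=x_1x_2$), so $A_1$ need not be invertible even over $\Q$; the paper handles this via the universal tensor over $\Lambda=\Z[u_{ijk}]$ (Lemma~\ref{L:nodenom}) together with $\GL_2(\Z)$-equivariance (Section~\ref{S:GLinv}), which your sketch omits. Second, translating the module-theoretic balanced condition into the fractional-ideal statement $MN\subset\If$, $|M||N|=|\If|$ is not merely ``a determinant calculation'': the paper devotes Proposition~\ref{P:eqb}, Proposition~\ref{P:charfrac}, and Theorem~\ref{T:sameBhar} to showing balanced modules are characteristic, that characteristic modules embed as fractional ideals when $f$ is non-degenerate, and that the index condition becomes the norm equality.
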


Theorem~\ref{T:IsameBhar} follows from Theorems~\ref{T:bijZ} and \ref{T:sameBhar}.
 In \cite{HCL2}, Bhargava asks for an appropriate formula of balanced
for degenerate forms so as to obtain a theorem such as the above.  We will give such a notion, which can be seen for example in Theorem~\ref{T:introarb}.

We  also relate symmetric elements of $A\in \Z^2 \tensor \Z^n \tensor \Z^n$ to the $2$-part of class groups of rings
associated to binary forms.  For nice forms $f$ we get the following, which follows from Theorems~\ref{T:gensym} and \ref{T:primequiv}.
\begin{theorem}\label{T:primsym}
For every primitive non-degenerate binary $n$-ic form $f$ with coefficients in $\Z$, there is a bijection
$$
\bij{classes of $(M,k)$ where $M$ is a fractional $R_f$-ideal,
$k$ is an invertible element of $R_f\tensor_\Z \Q$, and $M=(I_fk:M)$}{$\GL_n(\Z)$ classes of $A\in \Z^2 \tensor \Sym_2 \Z^n $ with $\Det(A)=f$},
$$
where $\Sym_2 \Z^n$ are symmetric $n$ by $n$ matrices, the action of $g\in\GL_n(\Z)$ is by multiplication on the left by $g$ and right by $g^t$,
and $(M,k)$ and $(M_1,k_1)$ are in the same class if
$M_1=\lambda M$ and $k_1=\lambda^{2} k$ for some invertible element 
$\lambda\in R_f\tensor_\Z \Q$, and $(I_fk:M)$ is the fractional ideal of elements $x$ such that
$xM\subset I_fk$.
\end{theorem}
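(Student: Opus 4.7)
I will derive Theorem~\ref{T:primsym} from the balanced-pair correspondence of Theorem~\ref{T:IsameBhar} by tracking the involution $\sigma$ that swaps the two $\Z^n$ factors in $\Z^2\tensor\Z^n\tensor\Z^n$; a symmetric tensor is by definition a fixed point of $\sigma$, and a short computation identifies its balanced partner $(M,N)$ with a pair $(M,k)$ of the required form. The first step is the swap dictionary: if $A=(A_1,A_2)$ corresponds to the balanced pair $(M,N)$, then $\sigma(A)=(A_1^t,A_2^t)$ corresponds to $(N,M)$. This is natural because the two $\Z^n$ factors in the construction play the roles of bases of $M$ and $N$, so exchanging them exchanges the two ideals; I would verify it directly from the explicit formulas behind Theorem~\ref{T:IsameBhar}.

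\textbf{From symmetry to the colon condition.} A symmetric $A$ is $\sigma$-fixed, so by the swap dictionary its balanced pair satisfies $(M,N)\sim(N,M)$: there exists $\nu\in(R_f\tensor_\Z\Q)^*$ with $N=\nu M$. Setting $k=\nu^{-1}$, the balanced conditions $MN\subset I_f$ and $|M||N|=|I_f|$ become $M\cdot M\subset kI_f$ and $|M|^2=|kI_f|$; since $M\subset(I_fk:M)$ and the two fractional ideals have equal norms, they must agree, yielding $M=(I_fk:M)$. The balanced equivalence $(M,N)\sim(\lambda M,\lambda^{-1}N)$ transforms $\nu$ to $\lambda^{-2}\nu$ and hence $k$ to $\lambda^{2}k$, matching the equivalence $(M,k)\sim(\lambda M,\lambda^{2}k)$ in the theorem. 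Conversely, given $(M,k)$ with $M=(I_fk:M)$, setting $N=k^{-1}M$ yields a balanced pair that is visibly $\sigma$-symmetric, providing the inverse map.

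\textbf{Descending to $\GL_n(\Z)$-orbits.} The theorem uses $\GL_n(\Z)$ embedded diagonally in $G$ via $g\mapsto(g,g)$ (legitimate because $\det(g)^2=1$ over $\Z$), whereas the balanced correspondence uses the full $G$. Two points remain: every $G$-orbit whose balanced pair is $\sigma$-symmetric must contain a genuinely symmetric representative, and two symmetric tensors in the same $G$-orbit are in fact $\GL_n(\Z)$-equivalent. For the first, applying the construction of Theorem~\ref{T:IsameBhar} to $(M,k^{-1}M)$ uses the symmetric multiplication map $M\times M\to kI_f$ and produces a symmetric pair of matrices. For the second, if $(g_1,g_2)\in G$ carries a symmetric $A$ to a symmetric $A'$ then $h:=g_2^{-1}g_1$ must satisfy $hA_i=A_ih^t$ for each $i$; primitivity and non-degeneracy of $f$ then force $h$ to lie in a very small twisted centralizer, which combined with the balanced-pair equivalence returns $(g_1,g_2)$ to the diagonal copy of $\GL_n(\Z)$. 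This centralizer analysis, where the primitive non-degenerate hypothesis is consumed, is the main obstacle I foresee; I expect Theorems~\ref{T:gensym} and \ref{T:primequiv} to supply the general framework and sharper equivalent formulations that make the descent clean.
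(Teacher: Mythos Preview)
Your route through Theorem~\ref{T:IsameBhar} differs from the paper's and has a genuine gap. The paper never descends from $G$-orbits: it uses the \emph{based} bijection of Theorem~\ref{T:bijZ}, under which a tensor $A$ is symmetric precisely when the two based modules $M,N$ are literally identical (same module, same basis). Quotienting both sides by the diagonal $\GL_n(\Z)$ then yields Theorem~\ref{T:gensym} directly, with no descent argument at all. Theorem~\ref{T:primequiv} then converts a self-balanced module $(M,\phi)$ into a pair $(M,k)$ by realizing $M$ as a fractional ideal, after which $\phi$ becomes multiplication by a specific element of $R_f\otimes\Q$ that pins down $k$.

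The gap in your approach is that passage through balanced-pair \emph{classes} discards exactly the data needed to recover $k$. From $(M,N)\sim(N,M)$ you extract $\nu$ with $N=\nu M$, but $\nu$ is determined only modulo the unit group of the multiplier ring $\O_M=\{x:xM\subset M\}$, so $k=\nu^{-1}$ is well-defined only modulo $\O_M^*$. The equivalence on $(M,k)$, however, identifies $k$ only modulo $(\O_M^*)^2$; whenever $\O_M^*/(\O_M^*)^2\neq 1$ your forward map is ill-defined and the descent fails. Concretely, for $f=x_1^2+x_2^2$ the pairs $(\Z[i],1)$ and $(\Z[i],i)$ are inequivalent (no unit of $\Z[i]$ squares to $i$), yet both yield the balanced pair of ideals $(\Z[i],\Z[i])$; on the tensor side they give symmetric $A$'s lying in the same $G$-orbit but in distinct $\GL_2(\Z)$-orbits. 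The ``twisted centralizer'' $\{h:hA_i=A_ih^t\}$ you hope is small is exactly the image of $\O_M^*$, and the obstruction to your descent is precisely $\O_M^*/(\O_M^*)^2$. The fix is to retain the balancing map $\phi$ rather than only the pair of ideals---i.e.\ to work at the based level, which is what the paper does via Theorem~\ref{T:bijZ}.
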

We also give a version of the above in Theorem~\ref{T:gensym} for all non-zero forms $f$, which again uses a more delicate notion of balanced.  
If we restrict Theorem~\ref{T:primsym} to invertible modules $M$, then the condition $M=(I_fk:M)$ is replaced by
$M^2=I_fk$, and the restricted set is an extension of a torsor of the 2-part of the class group of $R_f$ by $R_f^*/R_f^2$.
(We say a torsor instead of a principal homogeneous space because $I_f$ might not be a square in the class group and there
would be no such $M$ in that case.)

This paper also gives analogous results over an arbitrary base scheme $S$.  We consider $V,U,W$, locally free $\OS$-modules of ranks 2, $n$, and $n$,
respectively.  We then study global sections $p\in V\tesnor U \tesnor W$.  We can construct 
$\Det(p)\in \Sym^n V \tensor \wn U \tensor \wn W$, which is a binary $n$-ic form.  
Fix any $f$ in $\Sym^n V \tensor L$, where $L$ is a locally free rank 1 $\OS$-module.
There is a natural associated rank $n$ $\OS$-algebra $R_f$ and an $R_f$ module $I_f$ with a natural map $I_f\ra V$ of $\OS$-modules (see \cite{Bf}, or Section~\ref{S:arbbase} of the current paper).
 A \emph{balanced pair of modules} for a non-zero-divisor $f$ is a 
pair of $R_f$-modules $M$ and $N$, each a locally free rank $n$ $\OS$-module such that $\wn M \tensor \wn N\isom L^*$, and a map of $R_f$-modules $M \tensor_{R_f} N \ra I_f$, such that when the composition
$M \tensor_\OS N \ra M \tensor_{R_f} N \ra I_f \ra V$ is written as $A\in M^* \tesnor N^* \tensor V$
 we have $\Det(A)=fu$, where $u$ is a unit in $\OS$.  (We can then choose a unique isomorphism $\wn M \tensor \wn N\isom L^*$ such that $\Det(A)=f$.)
We have the following, proven in Theorem~\ref{T:ARB}.
\begin{theorem}\label{T:introarb}
For every non-zero-divisor binary $n$-ic form $f\in \Sym^n V \tensor L$, there is a bijection
$$
\bij{isomorphism classes of balanced pairs $(M,N)$ of modules for $f$}{isomorphism classes of $A\in V\tensor U\tensor W$, where $U$ and $W$ are locally free rank $n$ $\OS$-modules with
an isomorphism $\wn U \tensor \wn W\isom L$ such that $\Det(A)=f$}.
$$
\end{theorem}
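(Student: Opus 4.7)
The plan is to construct explicit mutually inverse maps in both directions, reducing the global statement to the affine (and ultimately free) case by a local-to-global argument based on Theorem~\ref{T:bijZ}.

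The forward direction (balanced pair to tensor) is essentially formal. Given a balanced pair $(M,N)$ with pairing $\mu \colon M \tensor_{R_f} N \to I_f$, set $U := M^*$ and $W := N^*$. The composition
\[
A \colon M \tensor_\OS N \lra M \tensor_{R_f} N \stackrel{\mu}{\lra} I_f \lra V
\]
is, by duality, a section of $V \tensor U \tensor W$. The hypothesis $\wn M \tensor \wn N \isom L^*$ dualizes to $\wn U \tensor \wn W \isom L$, and the definition of balanced says exactly that $\Det(A) = f$. This assignment is manifestly functorial in isomorphisms of balanced pairs.

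The reverse direction (tensor to balanced pair) is the substantive part. Given $A$ with $\Det(A) = f$, set $M := U^*$ and $N := W^*$; then $A$ is a priori only an $\OS$-linear map $M \tensor_\OS N \to V$. To endow $M$ and $N$ with $R_f$-module structures and factor this map through $I_f$, I would work locally: choose an open cover $\{S_i\}$ of $S$ on which $V, U, W, L$ trivialize, and apply the explicit formulas of Theorem~\ref{T:bijZ} on each $S_i$; these express the $R_f$-action on bases of $M$ and $N$ polynomially in the coefficients of $A$. The main obstacle is gluing on overlaps: one must show that the two local $R_f$-module structures and lifts $M \tensor_{R_f} N \to I_f$ agree. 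This reduces to equivariance of the integral construction under the natural action of $\GL(U) \times \GL(W)$ respecting the trivialization $\wn U \tensor \wn W \isom L$, i.e.\ the sheaf-theoretic version of the group $G$; since this equivariance is precisely the content of the bijection in Theorem~\ref{T:bijZ}, the local data descend to a global balanced pair.

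Finally, that the two constructions are mutually inverse is a local question and follows from the affine case; matching of equivalence relations reduces to the same equivariance statement already used in gluing.
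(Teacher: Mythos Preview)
Your approach is essentially the paper's own: construct $A$ from $(M,N)$ by composing the balancing map with $I_f\to V$, construct $(M,N)$ from $A$ by applying the universal polynomial formulas locally and gluing via $G$-equivariance, and verify the inverse locally. This is exactly the argument the paper gives for Theorem~\ref{T:ARB}.

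One technical point you gloss over: when you say the inverse check ``follows from the affine case,'' note that Theorem~\ref{T:bijZ} is stated over $\Z$, and its proof (via Proposition~\ref{P:howact}) uses a $\GL_2(\Z)$ action to arrange $f_0\ne 0$. Over an arbitrary affine base $B$ with $f$ merely a non-zero-divisor, you cannot always achieve this by $\GL_2(B)$; the paper handles it by passing to the localization $E$ of $B[x,y]$ at non-zero-divisors and applying a $\GL_2(E)$ transformation that makes the leading coefficient equal to $f$ itself, hence a unit in $E$. This is a small but necessary ingredient that your sketch should acknowledge.
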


From a $p\in V\tesnor U \tesnor W$ we give two constructions of the corresponding ideal
classes or modules.  The first construction (in Section~\ref{S:arbbase}) is algebraic and explicit and the second (in Section~\ref{S:ArbBase}) is geometric and more intuitive.  
We give a heuristic description of the geometric construction here.  If we have locally free $\OS$-modules $F$ and $G$,
and $s\in F \tesnor G$, then we can construct the $k$-minor 
$\wedge^k s \in \wedge^k F \tensor \wedge^k G$.  If $H$ is also a locally free $\OS$-module, and
we have $s\in F \tesnor G\tensor H$, then we have a $k$-minor $\wedge^k_H s$ with $H$-coefficients in $\wedge^k F \tensor \wedge^k G \tensor \Sym^k H$ (see \cite[Section 8.2]{Quartic} for the details of this construction).
For $p\in V\tesnor U \tesnor W$, the $n$-minor with coefficients in $V$ defines a subscheme
$T_p(V)$ in $\P(V)$, the $2$-minor with coefficients in $U$ defines a subscheme
$T_p(U)$ in $\P(U)$, and the $2$-minor with coefficients in $W$ defines a subscheme
$T_p(W)$ in $\P(W)$.  Abusing notation, we let $\pi$ denote the map from all of these schemes to $S$.
The \emph{heuristic} definition of $R_f$ is to take $\pi_* \O_{T_p(V)}$ (or $\pi_* \O_{T_p(U)}$ or $\pi_* \O_{T_p(W)}$--all the $\OS$-algebras turn out to be the same in the nicest cases),
and $M=\pi_* \O_{T_p(U)}(1)$ and $N=\pi_* \O_{T_p(W)}(1)$ (where $\O(1)$ is as pulled back from the corresponding projective bundle).
This construction does not work for all $p$ (e.g. the three algebras given for $R_f$ are not necessarily the same)  and it is not functorial in $S$.  As in the case of binary $n$-ic forms in \cite{Bf}, we use hypercohomology to extend our heuristic geometric construction to a construction that works in all cases and is functorial.

\subsection{Outline of the paper}
In Section \ref{S:binf} we review the rings and ideals associated to binary $n$-ic forms, and give some computations with these rings and ideals that are critical for the work in this paper.  In Section~\ref{S:Main}, we prove our main theorem (Theorem~\ref{T:bijZ}) over the integers.  We first give the algebraic constructions of a pair of modules from an element of $\Z^2 \tesnor \Z^n \tensor \Z^n$ in Section~\ref{SS:algZ}.
In Section~\ref{SS:proofZ} we prove Theorem~\ref{T:bijZ} when
the leading coefficient of $f$ is not zero.
In Section~\ref{S:GLinv}, we study the $\GL_2(\Z)$ 
invariance of our construction of modules, and use this to finish the proof of Theorem~\ref{T:bijZ}.
In Section~\ref{S:sym} we give the general analogs of Theorem~\ref{T:bijZ} for symmetric tensors.

In Section~\ref{S:balance}, we further study the notion of balanced 
pairs of modules, and show it is equivalent to a characteristic 
polynomial condition and an index condition (Proposition~\ref{P:eqb}).
In Section~\ref{SS:nodeg}, we show that for non-degenerate binary $n$-ic forms 
all balanceable modules are fractional ideals (Proposition~\ref
{P:charfrac}), and prove that in this case the definition of balanced modules
is equivalent to the definition of balanced ideals given above (Theorem~\ref
{T:sameBhar}).    In Section~\ref{SS:prim}, we specialize to 
the case of primitive non-degenerate forms, where we see that every 
fractional ideal has a unique balancing partner.  This is the final 
step in the proof of Theorem~\ref{T:specbijI}. 

In Section~\ref{S:arbbase}, we prove versions of these main theorems over an arbitrary base.  In particular, we prove Theorem~\ref{T:introarb} (as Theorem~\ref{T:ARB}) and a symmetric version.
In Section~\ref{SS:geomZ}, we give
a geometric construction of the modules 
from the universal tensor
and prove it is the same as the algebraic construction in Section~\ref{SS:algZ}.  The main obstacle is that we give multiple
ring constructions and we must show that they agree.  The rings
are given by global sections of different schemes, but the schemes themselves are not isomorphic.
Finally, in Section~\ref{S:ArbBase}, we give a geometric construction over arbitrary base of the modules from a triple tensor
and prove that it commutes with base change (Corollary~\ref{C:BC}).

\section{Binary forms, rings, and ideals}\label{S:binf}
Given a binary $n$-ic form $f_0x_1^n +f_1 x_1^{n-1}x_2 +\dots +f_nx_2^n$ with $f_i\in \Z$, there is a naturally associated
rank $n$ ring $R_f$ (see \cite{Naka}, \cite{Simon}, \cite{Bf}) and a sequence of $R_f$-modules (see \cite{SimonIdeal}, \cite{Bf}). 
Here we review the facts from \cite{Bf} about $R_f$ and these
modules that are necessary in this paper, as well as make some computations that will be critical in this work.  We will eventually need these results over more general rings than $\Z$,
so we will now work over an arbitrary ring $B$ in place of $\Z$.  

Let $f=f_0x_1^n +f_1 x_1^{n-1}x_2 +\dots +f_nx_2^n$ a binary $n$-ic form with coefficients $f_i\in B$ such that $f$
is not a zero divisor in $B[x,y]$.  We first give geometric constructions of a ring and ideals from $f$, as given in \cite{Bf}.
We define $R_f$ as the $B$-algebra of global sections of the regular functions of $T_f$,
the subscheme of $\P^1_B$ defined by $f$.  We have line bundles $\O_{T_f}(k)$ on $T_f$ pulled back from $\O(k)$ on $\P^1_B$.
We define $I_f$ to be $\Gamma(\O_{T_f}(n-3))$ (i.e. the global sections of $\O_{T_f}(n-3)$), and 
$J_f$ to be $\Gamma(\O_{T_f}(n-2))$.  This gives $I_f$ and $J_f$ the structure of $R_f$-modules. 
Note that our $I_f$ is the $I_f^{n-3}$ or $\fIf_{n-3}$ of \cite{Bf}, and our $J_f$ is the $I_f^{n-2}$ or $\fIf_{n-2}$ of \cite{Bf}.

Equivalent, but more concrete, constructions of $R_f$, $I_f$, and $J_f$ are also given in \cite{Bf}, and we give those now,
as they will be easier to work with. 
For these constructions, we assume $f_0$ is not a zero-divisor in $B$.
 Write $f=F(x,y)$.  
Let $B'=B_{f_0}$ (the ring $B$ with $f_0$ inverted).  We can also define the $B$-algebra 
$R_f$ as the subring of $B'[\theta]/F(\theta,1)$ generated by $\zeta_0,\dots,\zeta_{n-1}$ with 
$$\zeta_0=1, \text{   and   }
\zeta_k=f_0\theta^k+\dots+f_{k-1}\theta\text{ for } k>0.$$
The $\zeta_k$ give a $B$-module basis of $R_f$, and it is shown in \cite[Theorem 2.4]{Bf} that 
this definition of
$R_f$
agrees with the geometric one give above, and in particular that the $B$-module generated by the $\zeta_i$
 is closed under multiplication.
Note that if $f_0$ is a unit in $B$, then $R_f=B[\theta]/F(\theta,1)$.
We can define $I_f$ and $J_f$ as sub-$B$-modules of $B'[\theta]/F(\theta,1)$, such that 
\begin{align*}
I_f 
  \mbox{ is the }  &B \mbox{-module generated by } && 1,\theta,\theta^2,\dots,\theta^{n-3},\zeta_{n-2},\zeta_{n-1} \textrm{ or}\\
\mbox{ equiv. the }&B\mbox{-module generated by } && 1,\theta,\theta^2,\dots,\theta^{n-3},f_0\theta^{n-2},f_0\theta^{n-1}+f_1\theta^{n-2} \\
J_f 
  \mbox{ is the }&B\mbox{-module generated by } && 1,\theta,\theta^2,\dots,\theta^{n-3},\theta^{n-2},\zeta_{n-1}. &
\end{align*}
When $n=2$, we use only the second description of $I_f$ given above.
In \cite[Theorem 2.4]{Bf}, it is shown that these definitions of $I_f$ and $J_f$ agree with the geometric ones given above, and in particular that
$I_f$ and $J_f$ are closed under multiplication by elements of $R_f$.
We have a map of $R_f$-modules $I_f\ra J_f$ given by inclusion.  This map is not canonical and does not arise geometrically,
yet it will be important in our proofs.  

The elements $f_0,\zeta_1,\dots,\zeta_{n-1}$ are a 
$B'$-module basis of 
$B'[\theta]/F(\theta,1)$.
Let $\check{\zeta_i}$ be the $B'$-module basis of $\Hom_{B'}(B'[\theta]/F(\theta,1),B')$ 
dual to the $\zeta_i$.
So $\check{\zeta}_i(\zeta_j)=\delta_{ij}$ for $j>0$.  
Also, let $\check{\theta}_i$ be the $B'$-module basis of $\Hom_{B'}(B'[\theta]/F(\theta,1),B')$ 
dual to $1,\theta,\theta^2,\dots,\theta^{n-1}$.
We can apply
these $\check{\zeta}_i$ and $\check{\theta}_i$ to elements in $I_f$ and $J_f$ since they lie in $B'[\theta]/F(\theta,1)$,
but they are not necessarily dual to a $B$-module basis of $I_f$ or $J_f$.
The following are the key computations we will need.

\begin{proposition}\label{P:getcoeff}
For $r\in B'[\theta]/F(\theta,1)$ and $1\leq k\leq n-1$, 
$$\check{\zeta}_{n-1}(\zeta_k r)=\check{\theta}_{n-1-k}(r)-f_k\check{\zeta}_{n-1}(r).$$
\end{proposition}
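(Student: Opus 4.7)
The plan is to reduce the identity to two straightforward coordinate computations, then chain them together by induction on $k$, exploiting the recursion $\zeta_{k+1} = \theta\zeta_k + f_k\theta$ (valid for $k\ge 1$, immediate from the definition of the $\zeta_i$).

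First I would record two preliminary identities. Since $\zeta_k = f_0\theta^k + f_1\theta^{k-1}+\cdots+ f_{k-1}\theta$ has $\theta^k$-coefficient $f_0$, the change-of-basis matrix from $(1,\theta,\ldots,\theta^{n-1})$ to $(f_0,\zeta_1,\ldots,\zeta_{n-1})$ is upper-triangular with $f_0$'s on the diagonal, so comparing the top coefficients yields
\[
\check{\zeta}_{n-1}(r) \;=\; f_0^{-1}\check{\theta}_{n-1}(r) \qquad \text{for all } r\in B'[\theta]/F(\theta,1).
\]
Second, using the reduction $\theta^n \equiv -f_0^{-1}\bigl(f_1\theta^{n-1}+f_2\theta^{n-2}+\cdots+f_n\bigr) \pmod{F(\theta,1)}$, writing $s=\sum c_j\theta^j$ and isolating the coefficient of $\theta^m$ in $\theta s$ gives the ``shift lemma''
\[
\check{\theta}_m(\theta s) \;=\; \check{\theta}_{m-1}(s) \;-\; f_{n-m}\,\check{\zeta}_{n-1}(s) \qquad (1\le m\le n-1),
\]
the single nontrivial contribution beyond the obvious index shift coming from the reduction of the $c_{n-1}\theta^n$ term.

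The base case $k=1$ is then immediate: $\zeta_1 r = f_0\theta r$, so $\check{\zeta}_{n-1}(\zeta_1 r)=\check{\theta}_{n-1}(\theta r)$, and the shift lemma with $m=n-1$ yields $\check{\theta}_{n-2}(r)-f_1\check{\zeta}_{n-1}(r)$. For the inductive step, use $\zeta_{k+1}=\theta\zeta_k+f_k\theta$ to write
\[
\check{\zeta}_{n-1}(\zeta_{k+1}r) \;=\; \check{\zeta}_{n-1}\bigl(\zeta_k\cdot(\theta r)\bigr) \;+\; f_k\,\check{\zeta}_{n-1}(\theta r);
\]
apply the inductive hypothesis with $r$ replaced by $\theta r$ to the first summand, observe that the two $f_k\check{\zeta}_{n-1}(\theta r)$ terms cancel, and then apply the shift lemma with $m=n-1-k$ to the surviving $\check{\theta}_{n-1-k}(\theta r)$, producing exactly $\check{\theta}_{n-2-k}(r)-f_{k+1}\check{\zeta}_{n-1}(r)$, as needed.

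The argument is almost entirely index bookkeeping; the only substantive point is the change-of-basis identity $\check{\zeta}_{n-1}=f_0^{-1}\check{\theta}_{n-1}$, which explains why $\check{\zeta}_{n-1}$ is the ``correct'' functional to pair with the $\zeta_k$-action. The main risk is miscounting in the $\theta^n$-reduction when deriving the shift lemma, which is what matches $f_{n-m}$ with $f_{k+1}$ in the inductive step.
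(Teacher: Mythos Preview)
Your argument is correct. You prove the identity by induction on $k$, using the recursion $\zeta_{k+1}=\theta\zeta_k+f_k\theta$ together with a general shift lemma $\check{\theta}_m(\theta s)=\check{\theta}_{m-1}(s)-f_{n-m}\check{\zeta}_{n-1}(s)$. The paper instead does a single direct computation: it expands $\zeta_k r=(f_0\theta^k+\cdots+f_{k-1}\theta)\sum_j r_j\theta^j$, splits off the term $r_{n-1-k}\theta^{n-1-k}$ (which contributes $f_0 r_{n-1-k}$ to the $\theta^{n-1}$-coefficient), and for the remaining terms with $j\ge n-k$ uses $f_0\theta^n+\cdots+f_n=0$ once to rewrite $\theta^{j-(n-k)}(f_0\theta^n+\cdots+f_{k-1}\theta^{n-k+1})=-\theta^{j-(n-k)}(f_k\theta^{n-k}+\cdots+f_n)$, from which only $j=n-1$ contributes $-f_k r_{n-1}$ in degree $n-1$. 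Dividing by $f_0$ finishes. So the paper never invokes the recursion for the $\zeta_k$ or an inductive hypothesis; your shift lemma is a strict generalization of what the paper later records as a corollary (the case $m=n-1$). Your route packages the bookkeeping more modularly, at the cost of an induction; the paper's route is a one-shot expansion that handles all $k$ simultaneously.
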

\begin{proof}
We will write out $\zeta_k r$ in terms of powers of $\theta$ and then read off the
coefficient of $\theta^{n-1}$.  First, we write $r=\sum_{j=0}^{n-1} r_j\theta^j$ and so
$$
\zeta_k r=(f_0\theta^k+\dots+f_{k-1}\theta)(r_{n-1}\theta^{n-1}+\dots+r_0).
$$
To find the $\theta^{n-1}$ coefficient, we only have to look at terms of $r$ with 
$j\geq n-1-k$.  From the $r_{n-1-k}\theta^{n-1-k}$ term we get a $\theta^{n-1}$ coefficient
of $r_{n-1-k}f_0$.  From the remaining terms, we get the sum
\begin{align*}
\sum_{j=n-k}^{n-1} r_j\theta^j(f_0\theta^k+\dots+f_{k-1}\theta) 
&=\sum_{j=n-k}^{n-1} r_j\theta^{j-(n-k)}(f_0\theta^{n}+\dots+f_{k-1}\theta^{n-k+1})\\
&=\sum_{j=n-k}^{n-1} -r_j\theta^{j-(n-k)}(f_k\theta^{n-k}+\dots+f_{n})\\
\end{align*}
and the only term of the final sum with a non-zero $\theta^{n-1}$ coefficient
is the $j=n-1$ term which has a $\theta^{n-1}$ coefficient of $-r_{n-1}f_k$.
So $\check{\theta}_{n-1}(r)=f_0\check{\theta}_{n-1-k}(r)-f_k\check{\theta}_{n-1}(r)$,
and dividing by $f_0$ proves the proposition.
\end{proof}

\begin{corollary}\label{C:thet}
For $r\in B'[\theta]/f(\theta,1)$, $$\check{\zeta}_{n-1}(\theta r)=\check{\zeta}_{n-2}(r).$$
\end{corollary}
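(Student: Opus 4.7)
The plan is to apply Proposition \ref{P:getcoeff} with $k=1$ and then convert between the two dual bases $\check{\theta}_i$ and $\check{\zeta}_j$. Since $\zeta_1 = f_0\theta$ and $\check{\zeta}_{n-1}$ is $B'$-linear, the proposition immediately gives
$$f_0 \check{\zeta}_{n-1}(\theta r) = \check{\theta}_{n-2}(r) - f_1 \check{\zeta}_{n-1}(r).$$
Since $f_0$ is a unit in $B'$, the corollary will then follow as soon as I can show the functional identity $\check{\theta}_{n-2} = f_0\check{\zeta}_{n-2} + f_1 \check{\zeta}_{n-1}$ on $B'[\theta]/F(\theta,1)$.

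To verify this identity, I would simply evaluate both sides on the $B'$-basis $\{f_0, \zeta_1, \dots, \zeta_{n-1}\}$. Reading off the coefficient of $\theta^{n-2}$ in the explicit expansion $\zeta_j = f_0\theta^j + f_1\theta^{j-1} + \dots + f_{j-1}\theta$ shows that $\check{\theta}_{n-2}(\zeta_j)$ equals $f_0$ when $j = n-2$, equals $f_1$ when $j=n-1$, and vanishes otherwise; on the basis element $f_0$ it vanishes trivially. The right-hand side $f_0\check{\zeta}_{n-2} + f_1\check{\zeta}_{n-1}$ agrees with these values by the defining duality $\check{\zeta}_i(\zeta_j) = \delta_{ij}$ for $j > 0$ together with $\check{\zeta}_i(f_0) = 0$ for $i \geq 1$.

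Substituting the identity back into the displayed equation yields $f_0 \check{\zeta}_{n-1}(\theta r) = f_0 \check{\zeta}_{n-2}(r)$, and dividing by $f_0$ completes the proof. The main obstacle is essentially absent: the substantive computation has already been done in Proposition \ref{P:getcoeff}, and what remains is purely the bookkeeping of translating the dual basis $\check{\theta}_i$ into the dual basis $\check{\zeta}_j$, which is straightforward because the change-of-basis matrix between $(\theta^j)$ and $(f_0, \zeta_1, \dots, \zeta_{n-1})$ is lower triangular with $f_0$'s on the diagonal.
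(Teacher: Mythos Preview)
Your proof is correct and follows essentially the same route as the paper's. The paper applies Proposition~\ref{P:getcoeff} with $k=1$ and then simply asserts the change-of-basis formulas $\check{\zeta}_{n-1}=\check{\theta}_{n-1}/f_0$ and $\check{\zeta}_{n-2}=(\check{\theta}_{n-2}-f_1\check{\theta}_{n-1}/f_0)/f_0$, which rearrange exactly to your identity $\check{\theta}_{n-2}=f_0\check{\zeta}_{n-2}+f_1\check{\zeta}_{n-1}$; you have supplied the explicit verification of that identity that the paper omits. One tiny caveat: your claim that $\check{\theta}_{n-2}(f_0)$ vanishes ``trivially'' tacitly assumes $n\geq 3$; when $n=2$ both sides evaluate to $f_0$ on the basis element $f_0$, so the identity still holds but the bookkeeping is slightly different.
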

\begin{proof}
We have
  \begin{equation*}
  \check{\zeta}_{n-1}:=\frac{\check{\theta}_{n-1}}{f_0} \quad \mbox{and} \quad 
  \check{\zeta}_{n-2}:=\frac{\check{\theta}_{n-2}- \frac{f_1 \check{\theta}_{n-1}}{f_0} }{f_0},
  \end{equation*}
and thus this follows from the above proposition when $k=1$.
\end{proof}

\begin{lemma}\label{L:inI}
If we have a homomorphism $\phi$ of $B$-modules from some $B$-module $P$ to $J_f$,
then the image of $\phi$ is in $I_f$ if and only if the image of 
$\check{\zeta}_{n-2}\phi$ is in $B$.
\end{lemma}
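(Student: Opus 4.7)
The plan is to compare the two $B$-module bases of $J_f$ and $I_f$ via the single functional $\check{\zeta}_{n-2}$.  In the basis $\{1,\theta,\dots,\theta^{n-3},\theta^{n-2},\zeta_{n-1}\}$ of $J_f$, applying the explicit formula for $\check{\zeta}_{n-2}$ read off from the proof of Corollary~\ref{C:thet} (combined with the $B'$-duality $\check{\zeta}_{n-2}(\zeta_j)=\delta_{n-2,j}$ for $j>0$ and $\check{\zeta}_{n-2}(f_0)=0$) shows that $\check{\zeta}_{n-2}$ vanishes on every generator except $\theta^{n-2}$, where it takes the value $1/f_0$.  Consequently, for an arbitrary element $y=a_0+a_1\theta+\dots+a_{n-3}\theta^{n-3}+a_{n-2}\theta^{n-2}+b\,\zeta_{n-1}\in J_f$ with $a_i,b\in B$ we obtain $\check{\zeta}_{n-2}(y)=a_{n-2}/f_0$ as an element of $B'$.

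The forward direction is then immediate by a change of basis: writing any $y\in I_f$ in the basis $\{1,\theta,\dots,\theta^{n-3},\zeta_{n-2},\zeta_{n-1}\}$ of $I_f$, the value $\check{\zeta}_{n-2}(y)$ is literally the $\zeta_{n-2}$-coefficient, which lies in $B$ by definition of that basis.  For the converse, suppose $\check{\zeta}_{n-2}(\phi(p))\in B$ for every $p\in P$.  The displayed formula gives $a_{n-2}/f_0\in B$, so $a_{n-2}\in f_0 B$ because $f_0$ is a non-zero-divisor in $B$.  Setting $d=a_{n-2}/f_0\in B$ and applying the identity $a_{n-2}\theta^{n-2}=d\,\zeta_{n-2}-d\,(f_1\theta^{n-3}+\dots+f_{n-3}\theta)$ then rewrites $\phi(p)$ as a $B$-linear combination of the generators of $I_f$, placing $\phi(p)$ in $I_f$.

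The only genuinely delicate point, and the place I would take some care, is the meaning of ``lies in $B$'' when $\check{\zeta}_{n-2}$ a priori produces elements of $B'=B_{f_0}$: the non-zero-divisor hypothesis on $f_0$ makes $B\hookrightarrow B'$ injective, so the conditions ``$a_{n-2}/f_0\in B$'' and ``$a_{n-2}\in f_0 B$'' are literally equivalent. The rest of the argument is a routine basis-change calculation between two natural $B$-bases of a rank-$n$ submodule of $B'[\theta]/F(\theta,1)$, and I do not anticipate any substantive obstruction.
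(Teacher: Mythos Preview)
Your proof is correct and follows the same idea as the paper's: the lemma reduces to the characterization $I_f=\{j\in J_f:\check{\zeta}_{n-2}(j)\in B\}$, which you verify by an explicit change of basis between the two $B$-bases of $J_f$ and $I_f$. The paper's own proof is the single sentence ``The elements of $I_f$ are just the elements $j\in J_f$ for which $\check{\zeta}_{n-2}(j)\in B$,'' so you have simply supplied the details (the computation $\check{\zeta}_{n-2}(\theta^{n-2})=1/f_0$ and the rewriting $f_0\theta^{n-2}=\zeta_{n-2}-f_1\theta^{n-3}-\cdots-f_{n-3}\theta$) that the paper leaves implicit.
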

\begin{proof}
The elements of
$I_f$ are just the elements $j\in J_f$ for which $\check{\zeta}_{n-2}(j)\in B$.
\end{proof}

Thus $\check{\zeta}_{n-1}$ and $-\check{\zeta}_{n-2}$ give two $B$-module maps
from $I_f$ to $B$, or a $B$-module map $I_f \ra V=B^2$, where $(-1)^{i+1}\zeta_{n-i}$ gives the map into
the $i$th coordinate of $V$.  We have chosen the maps in such a way that the map $I_f \ra V$ the canonical
map given in \cite[Equation (3.8)]{Bf} (where the $k$ in \cite[Equation (3.8)]{Bf} is $n-3$ for our purposes).  
This map is useful because it doesn't lose information about $R_f$-module maps.  More formally,
we have the following.

\begin{proposition}\label{P:Istruc}
 For any binary $n$-ic form $f$ and any $R_f$-module $P$,
composition with the map $I_f \ra V$ gives an injection
of $R_f$ modules
$$
\Hom_{R_f}(P,I_f)\ra \Hom_B(P,V).
$$
\end{proposition}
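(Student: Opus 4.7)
The plan is to translate the claim into a structural property of $I_f$. A map $\phi \in \Hom_{R_f}(P, I_f)$ becomes zero in $\Hom_B(P, V)$ precisely when $\phi(P) \subseteq \ker(I_f \to V)$. Since $\phi(P)$ is automatically an $R_f$-submodule of $I_f$, it suffices to prove that $\ker(I_f \to V)$ contains no nonzero $R_f$-submodule; equivalently, for each nonzero $q \in I_f$ there exists $r \in R_f$ with $(\check{\zeta}_{n-1}(rq), -\check{\zeta}_{n-2}(rq)) \neq (0,0)$ in $V$.

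To verify this I would first assume $f_0$ is not a zero divisor in $B$, so that the concrete description $I_f \subset B'[\theta]/F(\theta,1)$ is available and the map $I_f \to V$ is given by $i\mapsto(\check{\zeta}_{n-1}(i),-\check{\zeta}_{n-2}(i))$. Suppose toward contradiction that $q \in I_f$ is nonzero but $\check{\zeta}_{n-1}(rq) = \check{\zeta}_{n-2}(rq) = 0$ for every $r \in R_f$. Specializing to $r=\zeta_k$ for $1 \le k \le n-1$, Proposition~\ref{P:getcoeff} yields
$$
\check{\theta}_{n-1-k}(q) \;=\; \check{\zeta}_{n-1}(\zeta_k q) + f_k \check{\zeta}_{n-1}(q) \;=\; 0.
$$
Combined with $\check{\theta}_{n-1}(q) = f_0 \check{\zeta}_{n-1}(q) = 0$, every coefficient of $q$ in the $B'$-basis $1,\theta,\dots,\theta^{n-1}$ of $B'[\theta]/F(\theta,1)$ vanishes, so $q=0$ there and hence in the submodule $I_f$, a contradiction. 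This completes the argument in the concrete setting.

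The main obstacle is the general case, in which $f_0$ may be a zero divisor and the concrete description of $I_f$ is no longer directly available. My plan for this step is to check injectivity of the $B$-module map $\Hom_{R_f}(P,I_f)\to\Hom_B(P,V)$ after localization at each prime of $B$, using that the geometric constructions of $R_f$, $I_f$, and the canonical map $I_f \to V$ from \cite{Bf} are both natural in $B$ and equivariant under $\GL_2(B)$ changes of variables in $(x_1,x_2)$. Since the hypothesis that $f$ is a non-zero-divisor in $B[x,y]$ is preserved by localization, at each local stage one can perform a change of variables to place a non-zero-divisor coefficient in the leading position, thereby reducing to the concrete case already handled.
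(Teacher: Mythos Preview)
Your argument is essentially the paper's own proof: pick a nonzero $q$ in the image, use Proposition~\ref{P:getcoeff} with $r=\zeta_k$ together with the vanishing of $\check\zeta_{n-1}(q)$ to kill every $\check\theta_j(q)$, and conclude $q=0$. The paper's proof is written entirely in the concrete setting with $f_0$ a non-zero-divisor and does not carry out the further reduction you sketch; your extra paragraph is a reasonable plan, though note that when the paper later needs the analogous statement over a general base (Section~\ref{S:arbbase}) it instead enlarges $B$ to the ring obtained by inverting all non-zero-divisors of $B[x,y]$ and performs a $\GL_2$-change there, rather than localizing at primes of $B$.
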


\begin{proof}
Suppose, for the sake of contradiction, that we had a non-zero map $\phi \in \Hom_{R_f}(P,I_f)$ such that
the image of $\phi$ was in the kernel of $I_f\ra V$.  Let $r$ be a non-zero element of $\im(\phi)$.  
Then, by Proposition~\ref{P:getcoeff} we have 
$$\check{\theta}_{n-1-k}(r)=\check{\zeta}_{n-1}(\zeta_k r)=0$$
for $2\leq k\leq n-1$.  Thus, we see that $r=0$.
\end{proof}

In \cite[Corollary 3.7]{Bf}, it is shown that as $R_f$ modules, $J_f \isom \Hom_B(R_f,B)$, and thus we have the following proposition.
\begin{proposition}\label{P:Jstruc}
For any binary $n$-ic form $f$ and any $R_f$-module $P$,
composition with the map $\check{\zeta}_{n-1} \colon J_f \ra B$ gives an isomorphism
of $R_f$ modules
$$
\Hom_{R_f}(P,J_f)\stackrel{\check{\zeta}_{n-1}}{\ra} \Hom_B(P,B).
$$
\end{proposition}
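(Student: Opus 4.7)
\vspace{2ex}
\noindent\textbf{Proof proposal.} The plan is to reduce the statement to the standard tensor-hom adjunction, using the fact (cited just before the statement, from \cite[Corollary 3.7]{Bf}) that $J_f \isom \Hom_B(R_f,B)$ as $R_f$-modules. Once this identification is in place, the proposition is formally a restatement of adjunction, so the whole proof really amounts to matching the map $\check{\zeta}_{n-1}$ with the structural data of the dualizing isomorphism.

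First I would make the isomorphism $J_f \isom \Hom_B(R_f,B)$ explicit. The natural candidate is the map sending $j \in J_f$ to the $B$-linear functional $\psi_j \colon r \mapsto \check{\zeta}_{n-1}(rj)$; this is well defined because $R_f \cdot J_f \subset J_f$ and $\check{\zeta}_{n-1}$ sends $J_f$ into $B$, and it is $R_f$-linear by its definition. I would verify that this coincides with the isomorphism from \cite[Corollary 3.7]{Bf} (both are $R_f$-linear maps between locally free $B$-modules of the same rank, so it suffices to check one determinant or equivalently surjectivity/injectivity after inverting $f_0$, using the basis $\zeta_0,\ldots,\zeta_{n-1}$). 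The crucial consequence is that under this identification, the functional $\check{\zeta}_{n-1}\colon J_f \to B$ corresponds to \emph{evaluation at} $1 \in R_f$, since $\psi_j(1)=\check{\zeta}_{n-1}(j)$.

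Next, standard tensor-hom adjunction gives a natural isomorphism of $R_f$-modules
$$
\Hom_{R_f}(P,\Hom_B(R_f,B)) \isom \Hom_B(P \tensor_{R_f} R_f, B) \isom \Hom_B(P,B),
$$
where the composite sends $\phi \colon P \to \Hom_B(R_f,B)$ to $p \mapsto \phi(p)(1)$, i.e., to post-composition with evaluation at $1$. Composing with the identification of the previous paragraph, the adjunction isomorphism becomes precisely post-composition with $\check{\zeta}_{n-1}$, which is the map in the proposition.

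The only non-formal step is the identification of $\check{\zeta}_{n-1}$ with ``evaluation at $1$'' under the dualizing isomorphism of \cite[Corollary 3.7]{Bf}; I expect this to be the main obstacle, though it is a bookkeeping check rather than a substantive one. The $R_f$-linearity of the map in the proposition and the fact that it is an isomorphism then follow formally. In particular, injectivity is analogous to Proposition~\ref{P:Istruc} (a non-zero $\phi$ cannot have $\check{\zeta}_{n-1}\circ \phi = 0$ because that would make all functionals $\check{\zeta}_{n-1}(\zeta_k \cdot \phi(p))$ vanish, which by Proposition~\ref{P:getcoeff} forces $\phi(p)=0$), and surjectivity follows by dimension/rank counting on a free $B$-basis of $P$ once one reduces to the case $P=R_f$ via adjunction.
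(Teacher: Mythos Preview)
Your proposal is correct and matches the paper's approach exactly: the paper simply cites \cite[Corollary 3.7]{Bf} for the $R_f$-module isomorphism $J_f \isom \Hom_B(R_f,B)$ and states the proposition as an immediate consequence, leaving the adjunction step implicit. You have correctly filled in that step (including the identification of $\check{\zeta}_{n-1}$ with evaluation at $1$), which is precisely the standard argument the paper is gesturing at.
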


\section{Main theorems}\label{S:Main}

We write an element $A\in \Z^2 \tensor \Z^n \tensor \Z^n$ as pair $A_1,A_2$ of $n\times n$ matrices.
The \emph{determinant} of $A$ is the binary $n$-ic form $\Det(A_1x_1 +A_2x_2)$.
For non-zero binary form $f$ with integer coefficients, we defined in Section~\ref{S:binf}
 a rank $n$ ring $R_f$ and two modules $I_f$ and $J_f$ for that ring.
Recall that we have a map $I_f \ra V$ of abelian groups, where $V=\Z^2$.
We will next define a notion of a balanced pair of $R_f$-modules.  The idea is that the product of the
pair should map to $I_f$, but that map should be constrained by the form $f$ itself.
Note the definition we now give is different from the one given in the Introduction, but we will see in Theorem~\ref{T:sameBhar} that the definitions agree for non-degenerate $f$.

\begin{definition}
 A \emph{based balanced pair of modules} for $f$ is a 
pair of $R_f$-modules $M$ and $N$, a choice of basis
$M \isom \Z^n$ and $N \isom \Z^n$, and a map of $R_f$-modules
$M \tensor_{R_f} N \ra I_f$, such that when the composition
$M \tensor_\Z N \ra M \tensor_{R_f} N \ra I_f \ra V$ is written as a pair of matrices
$A_1$ and $A_2$ (viewing elements of $M$ as row vectors and elements of $N$ as column vectors), we have $\Det(A_1 x_1 + A_2 x_2)=f$.
If $v_i$, $m_j$, and $n_k$ are the bases of $V$, $M$, and $N$ respectively
indicated above, then
the $j,k$ entry of $A_i$ is the coefficient of $v_i$ in the image of $m_j \tesnor n_k$, i.e.
$(-1)^{i+1}\zeta_{n-i}(m_j \tesnor n_k)$.
We will often refer to the based balanced pair as $M, N$, with the bases and balancing map understood.
\end{definition}

\begin{definition}
A \emph{balanced pair of modules} for a non-zero form $f$ is a 
pair of $R_f$-modules $M$ and $N$, each a free rank $n$ $\Z$-module,
and a map of $R_f$-modules $M \tensor_{R_f} N \ra I_f$, such that when the composition
$M \tensor_\Z N \ra M \tensor_{R_f} N \ra I_f \ra V$ is written as a pair of matrices $A_1$ and $A_2$,
 we have $\Det(A_1 x_1 + A_2 x_2)=\pm f$.
Given a balanced pair of modules for a non-zero form $f$, there is
a unique choice of generator $\chi$ of $\wn M \tensor \wn N$
such that $\Det(A_1 x_1 + A_2 x_2)=f$
when constructing $A$ with bases of $M$ and $N$ that give $\chi\in \wn M \tensor \wn N$. 
If we have based balanced pairs $(M,N)$ and $(M',N')$ such that
the modules and balancing maps are the same and only the bases differ, then the change of bases must preserve $\chi$ since
both based balanced pairs give $\Det(A_1 x_1 + A_2 x_2)=f$.
\end{definition}

In this section we prove the following theorem.
\begin{theorem}\label{T:bijZ}
For every non-zero binary $n$-ic form $f$ with coefficients in $\Z$, there is a bijection
$$
\bij{based balanced pairs $(M,N)$ of modules for $f$}{$A\in \Z^2 \tensor \Z^n \tensor \Z^n$ with $\Det(A)=f$}.
$$
Let $G$ be the subgroup of $\GL_n(\Z)\times\GL_n(\Z)$ of elements $(g_1,g_2)$ such that
$\Det(g_1)\Det(g_2)=1$.  Then, $G$ acts equivariantly in the above bijection
(acting of the bases of $M$ and $N$), and we obtain a bijection
$$
\bij{isomorphism classes of balanced pairs $(M,N)$ of modules for $f$}{$G$-classes of $A\in \Z^2 \tensor \Z^n \tensor \Z^n$ with $\Det(A)=f$}.
$$
\end{theorem}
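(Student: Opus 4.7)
The forward map is essentially tautological. Given a based balanced pair $(M, N)$ with bases $(m_j), (n_k)$, the balancing map $M \tensor_{R_f} N \ra I_f$ composed with $I_f \ra V$ yields a $\Z$-bilinear map $M \tensor_\Z N \ra V$; written in the chosen bases this is exactly a pair of matrices $(A_1, A_2)$, so an element $A \in \Z^2 \tensor \Z^n \tensor \Z^n$, and the balancing condition on $M, N$ is precisely $\Det(A_1 x_1 + A_2 x_2) = f$.

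The real work is the inverse. Given $A$ with $\Det(A) = f$, my plan is to first reduce to the case $f_0 \neq 0$ using the $\GL_2(\Z)$-equivariance proved in Section~\ref{S:GLinv}: acting on $V$ by a suitable element of $\GL_2(\Z)$ replaces $f$ by an equivalent form with non-vanishing leading coefficient (always possible for a non-zero $f$) and transports both sides of the candidate bijection compatibly. Assuming $f_0 \neq 0$, the identity $\Det(A_1) = f(1,0) = f_0$ makes $A_1$ invertible over $\Q$, and setting $T := -A_1^{-1} A_2$ the identity $\Det(A_1 x + A_2) = f(x,1)$ combined with Cayley-Hamilton gives $f_0 T^n + f_1 T^{n-1} + \dots + f_n I = 0$. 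Hence the rule $\theta \mapsto T$ makes $\Q^n$ into a module over $R_f \tensor_\Z \Q = \Q[\theta]/(f(\theta, 1))$. The crucial algebraic claim, to be verified by direct manipulation of the adjugate expansion of $A_1 x + A_2$, is that each generator $\zeta_k = f_0 \theta^k + \dots + f_{k-1} \theta$ acts on $\Q^n$ by an integer matrix, so $\Z^n$ is stable under $R_f$. This puts an $R_f$-module structure on $N = \Z^n$, and a symmetric construction (acting on row vectors via $-A_2 A_1^{-1}$) does the same for $M$.

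To construct the balancing map, I will use the computational framework of Section~\ref{S:binf}. The tensor $A$ records exactly the $\check\zeta_{n-1}$ and $-\check\zeta_{n-2}$ coefficients of the desired products $m_j n_k$ viewed inside $B'[\theta]/F(\theta, 1)$. By Corollary~\ref{C:thet} and Proposition~\ref{P:getcoeff}, all of the remaining coefficients of $m_j n_k$ with respect to the basis $1, \theta, \dots, \theta^{n-3}, \zeta_{n-2}, \zeta_{n-1}$ of $J_f$ are forced to be obtainable by applying iterates of $\theta$ to $m_j$ or $n_k$ and then reading off a fixed coordinate, so they are integral by the step above. Lemma~\ref{L:inI} then certifies that the product in fact lies in $I_f \sub J_f$, giving a $\Z$-bilinear map $M \tensor_\Z N \ra I_f$. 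Proposition~\ref{P:Istruc} guarantees that any $R_f$-bilinear lift of the given $V$-valued pairing is unique, and the explicit formulas above furnish such a lift, so the balancing map $M \tensor_{R_f} N \ra I_f$ is well-defined and recovers $A$ after composition with $I_f \ra V$. Checking that the two assignments are mutually inverse is then a matter of tracing definitions.

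For the $G$-equivariance, the action of $(g_1, g_2) \in G$ by change of bases transports $A_i \mapsto g_1 A_i g_2^t$; the determinant constraint $\Det(g_1)\Det(g_2) = 1$ is exactly the condition that the change of bases preserves the canonical generator $\chi \in \wn M \tensor \wn N$ singled out by the balancing data. Quotienting both sides by this action, and noting that passing from a based balanced pair to an isomorphism class of balanced pair amounts to forgetting the choice of bases compatible with $\chi$, yields the second bijection. The main obstacle I anticipate is the pair of integrality assertions -- that each $\zeta_k$ acts by an integer matrix on $\Z^n$ and that the reconstructed pairing takes values in $I_f$ rather than only in $J_f$ -- both of which ultimately rest on rearranging the identity $\Det(A_1 x_1 + A_2 x_2) = f(x_1, x_2)$ together with the coefficient-extraction formulas of Proposition~\ref{P:getcoeff}.
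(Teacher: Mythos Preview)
Your outline is correct and lands on the same bijection, but the route differs from the paper's in two notable places.

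First, the order of reduction. The paper does \emph{not} reduce to $f_0\ne 0$ before constructing the inverse $\psi$. Instead it builds $\psi$ once and for all over the universal ring $\Lambda=\Z[u_{ijk}]$ for the universal tensor $\mathcal{C}$, obtains polynomial formulas for the $\zeta_k$-action and the balancing map, and then specializes $u_{ijk}\mapsto a_{ijk}$ to get $\psi(A)$ for \emph{every} $A$, including those with $f_0=0$. The $\GL_2(\Z)$-equivariance of Section~\ref{S:GLinv} is used only afterward, to verify $\psi\circ\phi=\mathrm{id}$ in the residual case $f_0=0$. Your ordering (reduce to $f_0\ne0$ first, then construct) also proves the theorem, but the universal construction is what makes the passage to an arbitrary base in Section~\ref{S:arbbase} immediate.

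Second, the integrality argument. Your suggestion to read the integrality of $f_0T^k+\cdots+f_{k-1}T$ off the adjugate expansion of $A_1x+A_2$ is valid and in fact more elementary than what the paper does: writing $\mathrm{adj}(A_1x+A_2)=\sum_j B_j x^j$ with $B_j\in M_n(\Z)$, the recursion $A_1B_{j-1}+A_2B_j=f_{n-j}I$ yields $B_{n-1-k}A_1=f_0T^k+\cdots+f_kI$, hence the matrix of $\zeta_k$ is $B_{n-1-k}A_1-f_kI\in M_n(\Z)$. The paper instead proves this integrality universally (Lemma~\ref{L:nodenom}) by a UFD trick: over $\Lambda$, Cayley--Hamilton lets one rewrite the left tail $c_0(-\mathcal{C}_1^{-1}\mathcal{C}_2)^k+\cdots$ as minus the right tail $c_{k+1}+\cdots+c_n(-\mathcal{C}_2^{-1}\mathcal{C}_1)^{n-k}$; the two sides have possible denominators in disjoint variable sets $\{u_{1jk}\}$ and $\{u_{2jk}\}$, so both lie in $\Lambda$.

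For the balancing map your sketch is a little looser than the paper's. Rather than reconstructing all coefficients of $m_j\circ n_k$ one by one, the paper first uses Proposition~\ref{P:Jstruc} to turn the $\Lambda$-pairing $\alpha\star\beta=\alpha\,\mathcal{C}_1\,\beta$ into an $R_c$-bilinear pairing $\circ:M_\mathcal{C}\times N_\mathcal{C}\to J_c$, checks $R_c$-balancedness via $(\theta\alpha)\star\beta=\alpha\star(\theta\beta)=-\alpha\,\mathcal{C}_2\,\beta$, and then applies Lemma~\ref{L:inI} exactly as you propose to land in $I_c$. Your idea amounts to the same computation unpacked coordinatewise; invoking Proposition~\ref{P:Jstruc} up front would tighten it.
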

We now give  a map $\phi$ from balanced based pairs to $\Z^2 \tensor \Z^n \tensor \Z^n$.  From the definition of a balanced based pair, we have
the map of $\Z$-modules
$M \tensor_\Z N \ra M \tensor_{R_f} N \ra I_f \ra V$, which can be written as a pair of matrices
$A_1,A_2$ as above.  This pair of matrices is $\phi(M,N)$.

In Section~\ref{SS:algZ} we construct a based balanced pair of modules from an $A\in \Z^2 \tensor \Z^n \tensor \Z^n$.
Our construction is completely concrete, and we give formulas for the action of $R_f$ on $M$ and $N$.
 In Section~\ref{SS:proofZ}, we prove that this construction gives an inverse to the map $\phi$ described above when $f_0\ne0$.
 In Section~\ref{S:GLinv}, we use the $\GL_2$ equivariance of our construction to reduce to the case that $f_0\ne0$, which will prove
Theorem~\ref{T:bijZ}.

\subsection{Construction of balanced pair of modules}\label{SS:algZ}

We are given $A\in \Z^2 \tensor \Z^n \tensor \Z^n$, which we can write as a pair $A_1, A_2$ of $n\times n$ matrices.
Let $f$ be the determinant of $A$.  In this section, we will construct a based balanced pair $(M,N)$ of modules for $f$.
We begin by letting $M=\Z^n$ and $N=\Z^n$ as abelian groups.  It remains to specify
the $R_f$ action on $M$ and $N$ and the map of $R_f$-modules $M \tensor_{R_f} N \ra I_f$.
We can write the elements of $M$ as row vectors with entries in $\Z$ and the elements of $N$ as column vectors with entries in $\Z$.
Heuristically, the action of $R_f$ will by given by $\theta$ acting on $M$ on the right by $-A_2A_1^{-1}$
and on $N$ on the left by $-A_1^{-1}A_2$.  The trouble with this construction is that $\theta$ is not an element of $R_f$
(unless $f_0$, the $x^n$ coefficient of $f$, is $\pm 1$) and that $A_1$ is not necessarily invertible 
(it could be the zero matrix!).  
We could solve both of these problems by inverting $f_0$, but it is possible that $f_0=0$.  
So we will pass to a universal situation, where we can always invert $f_0$.

We replace $\Z$ by the ring $\Lambda=\Z[\{u_{ijk}\}_{1\leq i\leq 2,1\leq j\leq n,1\leq k\leq n}]$ (the
free polynomial algebra on $2n^2$ variables over $\Z$), and we replace $A$ with the universal tensor $\mathcal{C}$ in $\Lambda^2 \tensor_\Lambda \Lambda^n \tensor_\Lambda \Lambda^n$,
where  $\mathcal{C}_i$ has $j,k$ entry $u_{i,j,k}$. 
We have a binary $n$-ic form $c=\Det(\mathcal{C}_1 x_1 + \mathcal{C}_2 x_2)$ with coefficients in $\Lambda$.  
We now let $M_\mathcal{C}=\Lambda^n$ and $N_\mathcal{C}=\Lambda^n$ as $\Lambda$-modules.  
We will give an action of the
$\Lambda$-algebra $R_c$ on $M_\mathcal{C}$ and $N_\mathcal{C}$
 and then we will give a map of $R_c$-modules $M_\mathcal{C} \tensor_{R_c} N_\mathcal{C} \ra I_c$.
This construction will be equivariant for the $G_\Lambda$ actions, where $G_\Lambda$ is the subgroup of $\GL_n(\Lambda)\times\GL_n(\Lambda)$
of $(g_1,g_2)$
such that
$\Det(g_1)\Det(g_2)=1$.
To recover a construction over $\Z$, we can just specialize by letting the $u_{i,j,k}=a_{i,j,k}$
in our formulas.

\subsubsection{$R_c$ action}\label{S:Raction}
We will write elements of $M_\mathcal{C}$ as row vectors with entries in $\Lambda$ 
and elements of 
$N_\mathcal{C}$ as column vectors with entries in $\Lambda$.
We can write $c=c_0 x_1^n + c_1 x_1^{n-1} x_2 + \dots + c_n x_2^n$.  
We will invert $c_0$ and denote all of the corresponding objects with a $'$.  For example, we have
$\Lambda'=\Lambda_{c_0}$, the ring $\Lambda$ with $c_0$ inverted.  We also have $R_c'=R_c\tensor_\Lambda \Lambda'$, which is just
the result of inverting $c_0$ in $R_c$.  If we write $c=C(x_1,x_2)$, we know from Section~\ref{S:binf} that $R_c'=\Lambda'[\theta]/C(\theta,1)$.
We have $M_\mathcal{C}'=M_\mathcal{C}\otimes_\Lambda \Lambda'$ and $N_\mathcal{C}'=N_\mathcal{C}\otimes_\Lambda \Lambda'$.

We define an action of $R_c'$ on $M_\mathcal{C}'$ and $N_\mathcal{C}'$ 
(which we still view as row vectors and column vectors respectively, just now with entries in $\Lambda'$)
by having $\theta$ act like 
$-\mathcal{C}_2\mathcal{C}_1^{-1}$ (on the right) on the row vectors and $-\mathcal{C}_1^{-1}\mathcal{C}_2$ (on the left) on the column vectors.
Since $\Det(\mathcal{C}_1x_1+\mathcal{C}_2x_2)=c$, the matrices $-\mathcal{C}_2\mathcal{C}_1^{-1}$ and  $-\mathcal{C}_1^{-1}\mathcal{C}_2$ satisfy their (common) characteristic polynomial $C(t,1)$.
Thus we have given a well-defined action
of $\Lambda'[\theta]/u(\theta,1)$ on $M_\mathcal{C}'$ and $N_\mathcal{C}'$.  
This restricts to an action of $R_c$ on $M_\mathcal{C}'$ and $N_\mathcal{C}'$, which we will now show
is actually an action of $R_c$ on $M_\mathcal{C}$ and $N_\mathcal{C}$.  

\begin{lemma}\label{L:nodenom}
  For $1\leq k \leq n-1$, the matrix
\begin{equation}
c_0(-\mathcal{C}_1^{-1}\mathcal{C}_2)^k + c_1(-\mathcal{C}_1^{-1}\mathcal{C}_2)^{k-1} + \dots c_{k-1}(-\mathcal{C}_1^{-1}\mathcal{C}_2)
\end{equation}
whose entries a priori are in  $\Q(u_{i,j,k})$ (the fraction field of $\Lambda$) are actually in $\Lambda$.
\end{lemma}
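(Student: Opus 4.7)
The plan is to use the classical adjugate identity $P(t) \cdot \operatorname{adj}(P(t)) = C(t,1) \cdot I$, where $P(t) = \mathcal{C}_1 t + \mathcal{C}_2$, viewed as an identity in $M_n(\Lambda[t])$, to clear the denominators coming from $\mathcal{C}_1^{-1}$ all at once and exhibit the matrix in the lemma as a polynomial expression visibly in $\Lambda$.

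First I would observe that since the entries of $P(t)$ lie in $\Lambda[t]$, so do all its cofactors, and therefore $\operatorname{adj}(P(t)) \in M_n(\Lambda[t])$. Writing $\operatorname{adj}(P(t)) = \sum_{j=0}^{n-1} B_j\, t^{\,n-1-j}$ with each $B_j \in M_n(\Lambda)$ and expanding the identity $P(t)\,\operatorname{adj}(P(t)) = C(t,1)\,I$, the coefficient of $t^{n-k}$ for $0 \leq k \leq n-1$ yields the relations
$$\mathcal{C}_1 B_0 = c_0 I \qquad \text{and} \qquad \mathcal{C}_1 B_k + \mathcal{C}_2 B_{k-1} = c_k I \;\text{ for } 1 \leq k \leq n-1.$$
Solving the second relation for $B_k$ and writing $X = -\mathcal{C}_1^{-1}\mathcal{C}_2$ gives the recursion $B_k = c_k \mathcal{C}_1^{-1} + X B_{k-1}$, and from the base case $B_0 = c_0 \mathcal{C}_1^{-1}$ a straightforward induction yields
$$B_k = (c_0 X^k + c_1 X^{k-1} + \dots + c_{k-1} X + c_k I)\,\mathcal{C}_1^{-1}.$$
Multiplying on the right by $\mathcal{C}_1$ and subtracting $c_k I$ produces the identity
$$c_0 X^k + c_1 X^{k-1} + \dots + c_{k-1} X = B_k \mathcal{C}_1 - c_k I,$$
whose right-hand side manifestly has entries in $\Lambda$ since $B_k$, $\mathcal{C}_1$, and $c_k$ do.

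The main conceptual point, rather than a technical obstacle, is recognizing that the adjugate of $\mathcal{C}_1 t + \mathcal{C}_2$ packages exactly the cancellations needed: once that is set up, the inductive verification is routine. An alternative via Cayley--Hamilton applied directly to $X$ would only give the monic characteristic equation $X^n + (c_1/c_0) X^{n-1} + \dots + (c_n/c_0)I = 0$, which introduces additional factors of $c_0$ in denominators rather than removing them from the partial sums $\zeta_k(X)$; the adjugate expansion, by contrast, makes the integrality of $\zeta_k(X)$ uniform in $k$ and explicit.
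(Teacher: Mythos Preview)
Your proof is correct, and it takes a genuinely different route from the paper's. The paper argues via Cayley--Hamilton and a symmetry trick: from $c_0X^n+\dots+c_nI=0$ one multiplies through by $X^{-(n-k)}=(-\mathcal{C}_2^{-1}\mathcal{C}_1)^{n-k}$ to rewrite the desired partial sum as a polynomial in $-\mathcal{C}_2^{-1}\mathcal{C}_1$; then, since the reduced denominators of the left side involve only the $u_{1jk}$ while those of the right side involve only the $u_{2jk}$, the UFD property of $\Lambda$ forces the entries into $\Lambda$. Your adjugate argument is more direct and constructive: it exhibits the matrix explicitly as $B_k\mathcal{C}_1-c_kI$ with $B_k\in M_n(\Lambda)$ read off from $\operatorname{adj}(\mathcal{C}_1t+\mathcal{C}_2)$, and so avoids both the UFD hypothesis on $\Lambda$ and the need to invert $\mathcal{C}_2$. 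One small remark: your final paragraph slightly undersells the Cayley--Hamilton route, since that is exactly what the paper uses---not to prove integrality on its own, but in tandem with the $\mathcal{C}_1\leftrightarrow\mathcal{C}_2$ symmetry; what your method buys is a clean closed formula and independence from any special property of the coefficient ring.
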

\begin{proof}
Over the field $\Q(u_{i,j,k})$, since $-\mathcal{C}_1^{-1}\mathcal{C}_2$ satisfies its characteristic polynomial, we have 
\begin{eqnarray*}
c_0(-\mathcal{C}_1^{-1}\mathcal{C}_2)^n &+& c_1(-\mathcal{C}_1^{-1}\mathcal{C}_2)^{n-1} + \dots c_{k-1}(-\mathcal{C}_1^{-1}\mathcal{C}_2)^{n-k+1} \\
&+& c_k(-\mathcal{C}_1^{-1}\mathcal{C}_2)^{n-k} + c_{k+1}(-\mathcal{C}_1^{-1}\mathcal{C}_2)^{n-k-1} + \dots c_{n}(-\mathcal{C}_1^{-1}\mathcal{C}_2)^{0}=0.
\end{eqnarray*}
Since $\mathcal{C}_1$ and $\mathcal{C}_2$ are invertible over the field $\Q(u_{i,j,k})$, the last equation is equivalent to
\begin{equation}\label{E:matint}
c_0(-\mathcal{C}_1^{-1}\mathcal{C}_2)^k + c_1(-\mathcal{C}_1^{-1}\mathcal{C}_2)^{k-1} + \dots c_{k-1}(-\mathcal{C}_1^{-1}\mathcal{C}_2)^{} = -(c_{k+1}(-\mathcal{C}_2^{-1}\mathcal{C}_1)^{0} + \dots c_{n}(-\mathcal{C}_2^{-1}\mathcal{C}_1)^{n-k}).
\end{equation}
If we view the matrix entries of both sides of Equation~\eqref{E:matint} as reduced ratios of elements of the UFD $\Lambda$, the denominator of the
left hand side can only involve $u_{1jk}$ and the denominator of the right hand side can only involve $u_{2jk}$.  Thus,
the matrices $c_0(-\mathcal{C}_1^{-1}\mathcal{C}_2)^k + c_1(-\mathcal{C}_1^{-1}\mathcal{C}_2)^{k-1} + \dots c_{k-1}(-\mathcal{C}_1^{-1}\mathcal{C}_2)^{}$
 must have all their entries in $\Lambda$.
\end{proof}

By definition, the $\Lambda$-algebra $R_c$ has a basis as a $\Lambda$-module given by $1,\zeta_1,\dots, \zeta_{n-1}$, where
$\zeta_k= c_0 \theta^k + \dots +c_{k-1}\theta \in \Lambda'[\theta]/c(\theta,1)$.
Thus the  action of $\zeta_k$ on $N_\mathcal{C}'$ is given by a matrix whose coefficients are in $\Lambda$,
and so it restricts to an action on $N_\mathcal{C}$.  An analogous argument can be made for $M_\mathcal{C}$.  This construction
is clearly equivariant for the $G_\Lambda$ actions.

\subsubsection{Balancing Map}
Now we will construct a map of $R_c$-modules $M_\mathcal{C} \tensor_{R_c} N_\mathcal{C} \ra I_c$.
The matrix $\mathcal{C}_1$ gives us an $\Lambda$-module pairing on $M_\mathcal{C}$ and $N_\mathcal{C}$ into $\Lambda$ by
$\alpha\star\beta =\alpha \mathcal{C}_1 \beta$ for $\alpha\in M_\mathcal{C}$ and $\beta\in N_\mathcal{C}$.
In other words, the matrix $\mathcal{C}_1$
which acts on $N_\mathcal{C}$ on the left as a $\Lambda$-module, gives $n$
homomorphisms of $\Lambda$-modules from $N_\mathcal{C}$ into $\Lambda$, one for each row of $\mathcal{C}_1$, and we map the $i$th basis element $m_i$ of $M$ to the  
$\Lambda$-module homomorphism of $N_\mathcal{C}$ into $\Lambda$ given by the $i$th row of $\mathcal{C}_1$.  
This gives
us a $\Lambda$-module map from $M_\mathcal{C}$ into $\Hom_\Lambda(N_\mathcal{C},\Lambda)$
and we have $\Hom_\Lambda(N_\mathcal{C},\Lambda)\isom \Hom_{R_c}(N,J_c)$, 
by Proposition~\ref{P:Jstruc}.
We use the symbol $\circ$ to denote the resulting pairing of $M_\mathcal{C}$ and $N_\mathcal{C}$ into $J_c$.
Thus, $\check{\zeta}_{n-1}(\alpha\circ\beta)= \alpha\star\beta$.
This pairing in $J_c$ 
is clearly equivariant for the $G_\Lambda$ actions.

Now we will show that $\circ$ gives a map of $R_c$-modules
$M_\mathcal{C} \tensor_{R_c} N_\mathcal{C} \ra I_c$.  To see this, we extend our pairing $\circ$ to
$M_\mathcal{C}' \tensor_{\Lambda'} N_\mathcal{C}' \ra J_c'$, which is a $R_c'$ module map for the $R_c'$ action on $N_\mathcal{C}$.
In fact, we can show that $\circ$ factors through $M_\mathcal{C}' \tensor_{R_c'} N_\mathcal{C}'$, i.e.  $(\theta \alpha)\circ\beta=\alpha\circ ( \theta\beta)$
for $\alpha\in M_\mathcal{C}'$ and $\beta \in N_\mathcal{C}'$.
If we fix an $\alpha$ and let $\beta$ vary over the elements of $N_\mathcal{C}'$, the expressions $(\theta \alpha)\circ\beta=\alpha\circ ( \theta\beta)$ give 
two homomorphisms from $N_\mathcal{C}'$ into $J_c'$ and we can check if they are the same by taking $\check{\zeta}_{n-1}$.
Now, $\check{\zeta}_{n-1}((\theta \alpha)\circ\beta)=(\theta \alpha)\star\beta=\alpha  (-\mathcal{C}_2 \mathcal{C}_1^{-1}) \mathcal{C}_1
\beta$ and
$\check{\zeta}_{n-1}(\alpha\circ ( \theta\beta))=\alpha \mathcal{C}_1(- \mathcal{C}_1^{-1}\mathcal{C}_2) \beta$, so we see that 
$\circ$ gives a map of $R_c'$ modules 
$M_\mathcal{C}' \tensor_{R_c'} N_\mathcal{C}' \ra J_c'$
and thus our original $\circ$ is a map
of $R_c$-modules $M_\mathcal{C} \tensor_{R_c} N_\mathcal{C} \ra J_c$.

We have an inclusion of $R_c$ modules
$I_c \subset J_c$ (given in Section~\ref{S:binf}), 
and we will use Lemma~\ref{L:inI} to see that for all $\alpha\in M_\mathcal{C}$ and $\beta\in N_\mathcal{C}$, the element 
$\alpha\circ\beta$ is in $I_c$.  Fix an $\alpha\in M_\mathcal{C}$. Then by Lemma~\ref{L:inI}, 
$ \alpha\circ N_\mathcal{C}   \subset I_c$ if and only if $\check{\zeta}_{n-2}(\alpha\circ N_\mathcal{C})\subset \Lambda$.
By Corollary~\ref{C:thet} we see that $\check{\zeta}_{n-2}(\alpha\circ N_\mathcal{C})=\check{\zeta}_{n-1}(\alpha\circ (\theta N_\mathcal{C}) )=
  \alpha\star(\theta N_\mathcal{C})$.
However, we have seen that the pairing $ \alpha\star (\theta\beta)$ is given by the matrix $-\mathcal{C}_2$, and thus
$\check{\zeta}_{n-2}(\alpha\circ N_\mathcal{C})\subset \Lambda$.  Thus we have given a map of $R_c$-modules $M_\mathcal{C} \tensor_{R_c} N_\mathcal{C} \ra I_c$.
Note that we have defined $\circ$ so that if $m_j$ and $n_k$ are the chosen bases of $M_\mathcal{C}$ and $N_\mathcal{C}$ respectively,
\begin{equation}\label{E:gotAB}
\check{\zeta}_{n-1}(m_j\circ n_k)=u_{1jk} \quad \mbox{and} \quad -\check{\zeta}_{n-2}(m_j\circ n_k)=u_{2jk},
\end{equation}
which makes $M_\mathcal{C}$ and $N_\mathcal{C}$ a based balanced pair of modules for $c$.

\subsubsection{Back to $\Z$}
Now given $A\in \Z^2 \tensor \Z^n \tensor \Z^n$, to find the action of $R_f$ on $M$, we take the matrix
by which $\zeta_k$ acted on $M_\mathcal{C}$ above, and substitute $a_{i,j,k}$ for the $u_{i,j,k}$, and similarly for $N$.  Of course,
the conditions for this to be a ring action will be satisfied since they are satisfied formally.  Also,
we have a map of $\Z$-modules $M \tesnor_\Z N \ra I_f$ given by specializing the formulas from the last section, and we can see that
this factors through a map of $R_f$-modules $M \tesnor_{R_f} N \ra I_f$ because the conditions for the factorization
and for the map to respect $R_f$-module structure are
satisfied formally.  Let $\psi(A)=(M,N)$.

\subsection{Proof of Theorem~\ref{T:bijZ} when $f_0\ne0$}\label{SS:proofZ}
Now we prove Theorem~\ref{T:bijZ} by showing that $\phi$ and $\psi$ are inverse constructions.
Suppose we have $A\in \Z^2 \tensor \Z^n\tensor \Z^n$.  Let $\psi(A)=(M,N)$, and let $A'=\phi(M,N)$.  By Equation~\eqref{E:gotAB},
we have that $A'=A$. Now, suppose we have $(M,N)$, a based balanced pair of modules for $f$, and $\phi(M,N)=A$ and
$\psi(A)=(M',N')$.  We first check that the action of $R_f$ is the same on $M$ and $M'$
(and $N$ and $N'$), and then we will check that the balancing maps agree.
We assume that $f_0\ne 0$. In this case, we may invert $f_0$ as in Section~\ref{SS:algZ},
and obtain $\Z[\theta]/F(\theta,1)$-modules $M_{f_0}$ and $N_{f_0}$.

\begin{proposition}\label{P:howact}
 If we write elements of $M_{f_0}$ as row vectors and elements of 
$N_{f_0}$ as column vectors, then
$\theta$ acts by $-A_2A_1^{-1}$ on the right  on $M_{f_0}$ and
$\theta$ acts by $-A_1^{-1}A_2$ on the left on $N_{f_0}$.
\end{proposition}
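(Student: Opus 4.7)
The plan is to exploit the $R_f$-bilinearity of the balancing pairing $\circ$ together with Corollary~\ref{C:thet}, which relates the coefficient functionals $\check\zeta_{n-1}$ and $\check\zeta_{n-2}$ under multiplication by $\theta$. First, note that since $\Det(A_1 x_1 + A_2 x_2) = f$, the leading coefficient satisfies $\det(A_1) = f_0$, so after inverting $f_0$ the matrix $A_1$ becomes invertible and the elements $-A_2 A_1^{-1}$ and $-A_1^{-1} A_2$ genuinely make sense over $\Z[1/f_0]$. Also, after inverting $f_0$, the element $\theta = \zeta_1/f_0$ lies in $R_f' = R_f \otimes \Z[1/f_0]$ and therefore acts on $M_{f_0}$ and $N_{f_0}$.

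Next, recall that by the definition of $\phi$ and the bases $\{m_j\}$ of $M$ and $\{n_k\}$ of $N$, the matrix entries of $A_1$ and $A_2$ are recovered from the balancing pairing by
\[
(A_1)_{jk} = \check\zeta_{n-1}(m_j \circ n_k), \qquad (A_2)_{jk} = -\check\zeta_{n-2}(m_j \circ n_k).
\]
Let $T$ denote the matrix of the right action of $\theta$ on $M_{f_0}$, so that $\theta m_j = \sum_l T_{jl} m_l$. Since $\circ$ is $R_f'$-bilinear on the inverted modules,
\[
(\theta m_j) \circ n_k \;=\; \theta \cdot (m_j \circ n_k) \quad \text{in } J_f \otimes \Z[1/f_0].
\]
Applying $\check\zeta_{n-1}$ to the left-hand side gives $(T A_1)_{jk}$, while applying it to the right-hand side and invoking Corollary~\ref{C:thet} gives $\check\zeta_{n-2}(m_j \circ n_k) = -(A_2)_{jk}$. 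Hence $T A_1 = -A_2$, and inverting $A_1$ over $\Z[1/f_0]$ yields $T = -A_2 A_1^{-1}$ as claimed.

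The computation for $N_{f_0}$ is entirely parallel: letting $T'$ be the matrix of the left action of $\theta$, so that $\theta n_k = \sum_l T'_{lk} n_l$, bilinearity gives $m_j \circ (\theta n_k) = \theta \cdot (m_j \circ n_k)$, and applying $\check\zeta_{n-1}$ together with Corollary~\ref{C:thet} yields $A_1 T' = -A_2$, hence $T' = -A_1^{-1} A_2$. The only real subtlety is the appeal to Corollary~\ref{C:thet}, which is precisely the bridge letting us convert multiplication by $\theta$ inside the argument of $\check\zeta_{n-1}$ into a change of functional from $\check\zeta_{n-1}$ to $\check\zeta_{n-2}$; once this is in hand, the identification of both matrices is immediate from the definition of $\phi$.
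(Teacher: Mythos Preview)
Your proof is correct and follows essentially the same route as the paper: both use Corollary~\ref{C:thet} to convert $\check\zeta_{n-1}(\theta\cdot(m_j\circ n_k))$ into $\check\zeta_{n-2}(m_j\circ n_k)=-(A_2)_{jk}$, and then read off the action of $\theta$ from the resulting matrix identity. The only cosmetic difference is that the paper first proves a faithfulness statement for the pairing $\star$ (using that $\alpha A_1=\alpha A_2=0$ forces $\alpha=0$ since $f\neq 0$) and then cancels, whereas you bypass this by noting $\det A_1=f_0$ is a unit in $\Z[1/f_0]$ and inverting $A_1$ directly---which is slightly cleaner here.
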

\begin{proof}
We let the map $M \tensor_\Z N \ra I_f$ be denoted by $\circ$.
We define $\alpha\star \beta$ to be $\check{\zeta}_{n-1}(\alpha \circ \beta)$.
We fix a non-zero $\alpha\in M_{f_0}$ and suppose for the sake of contradiction that $\alpha \star N_{f_0}=0$. 
Then $\alpha \circ N_{f_0}=0$ by Proposition~\ref{P:Jstruc}, and thus $\alpha A_1=\alpha A_2=0$.  Thus 
$\alpha$ is in the left kernel of $A_1x_1+A_2x_2$ for formal $x_i$ and so we obtain
$f=\Det(A_1x_1+A_2x_2)=0$, a contradiction.  Therefore, if  $\alpha \circ N_{f_0}=0$, then $\alpha=0$.
We have$(\theta\alpha)\star\beta=\check{\zeta}_{n-2}(\alpha\circ\beta)$
by Corollary~\ref{C:thet}
 and $\check{\zeta}_{n-2}(\alpha\circ\beta)=\alpha (-A_2) \beta =(\alpha (-A_2 A_1^{-1}))  A_1\beta$.
We conclude that 
$\theta\alpha=\alpha (-A_2 A_1^{-1})$.  A similar argument can be made for $N_{f_0}$.
\end{proof}

This proposition shows that the pairs of modules $(M,N)$ and $(M',N')$ have the same $R_f$ action.  We know that the map
$\Hom_{R_f} (M \tensor_{R_f} N, I )\ra \Hom_\Z (M \tensor_{\Z} N, V)$ is injective (from Proposition~\ref{P:Istruc}), and thus since 
$\phi(M,N)=A$ and $\phi(M',N')=\phi(\psi(A))=A$, we see that $(M,N)$ and $(M',N')$ have the same balancing map.
Therefore, we have proven Theorem~\ref{T:bijZ} when $f_0\ne 0$.  We will finish the proof at the beginning of the next section, by reducing to this case.

\subsection{$\GL(V)$ invariance}\label{S:GLinv}
Let $V=\Z^2$, and we have that $\GL_2(\Z)=\GL(V)$ acts on $V\tesnor \Z^n \tensor \Z^n$ and also on binary $n$-ic
forms in $\Sym^n V$.  The determinant map is equivariant for these actions.  Let $g\in \GL(V)$, so that
$g(A)=A'$ and $g(f)=f'$.  Then we have isomorphisms $R_f\isom R_f'$, and $I_f\isom I_f'$
(e.g. by the geometric construction of \cite[Section 2.3]{Bf}). 
In fact, $g$ also gives a map $V\ra V$ such that the diagram
$$
\begin{CD}
 I_f @>g>> I_f' \\
 @ VVV @ VVV \\
 V @>>g> V \\
\end{CD}
$$
commutes.

More concretely, consider $g=\left(\begin{smallmatrix} a & b \\ c& d \end{smallmatrix}\right)\in\GL_2(\Z)$.
If $A=(A_1,A_2)$, then $g(A)=A'=(A_1',A_2')=(aA_1+bA_2,cA_2+dA_2)$.  If $f=F(x,y)$, then $g(f)=f'=F(ax+cy,bx+dy)$.
Write $f'=F'(x,y)$.  If $\theta$ is a root of $F(x,1)$, then $\theta'=\frac{d\theta-c}{-b\theta+a}$ is a root of $F'(x,1)$.
This induces the map $R_f'\isom R_f$.  Note that $\theta=\frac{a\theta'+c}{b\theta'+d}.$
We can view $I_f$ and $I_f'$ as fractional ideals in the same $\Q$-algebra.
They are given as fractional ideals of different $\Q$-algebras, $Q_f$ and $Q_f'$ respectively,  in Section~\ref{S:binf},
but the map $\theta'\mapsto\frac{d\theta-c}{-b\theta+a}$ gives an isomorphism of those $\Q$-algebras.
Then the map $I_f\isom I_f'$ is given by
$$
I_f \ra Q_f \isom Q_f' \stackrel{\times (b\theta'+d)^{n-3}}{\longrightarrow} Qf' \supset I_f'.
$$
Note that $\frac{1}{b\theta'+d}=\frac{a-b\theta}{ad-bc}.$

Viewing $\check{\zeta}_{n-1}$,$\check{\zeta}_{n-2}$ and $\check{\zeta}'_{n-1}$,$\check{\zeta}'_{n-2}$ as maps of 
 $I_f$ and $I_f'$, respectively, we have that
$I_f\isom I_f'$ induces 
$$
\check{\zeta}'_{n-1}\mapsto a\check{\zeta}_{n-1}-b\check{\zeta}_{n-2} \text{  and  } 
-\check{\zeta}'_{n-2}\mapsto \check{\zeta}_{n-1}-d\check{\zeta}_{n-2},
$$
which exactly gives that our construction of $(A_1,A_2)$ from $\check{\zeta}_{n-1}$,$-\check{\zeta}_{n-2}$ is equivariant.
We can check this on a generating set of $\GL_2(\Z)$, though it also follows from \cite[Proposition 3.3]{Bf}.
If we write an element $v$ of $V$ as a column vector, then $g$ acts on $V$ by the standard left action.
In the map from $I_f\ra V$, an element $\alpha\in I_f$ maps to 
$\left[\begin{smallmatrix}
        \check{\zeta}'_{n-1}(\alpha)\\
 -\check{\zeta}'_{n-2}(\alpha)
       \end{smallmatrix}
 \right] .$

Our constructions of $M$, $N$, and the balancing map are equivariant under this $\GL(V)$ action.
More precisely, under the identifications $R_f\isom R_f'$ and $I_f\isom I_f'$ and the map $V \stackrel{g}{\ra} V$, the based modules and balancing map
we obtain from $A$ are the same as the based modules and balancing map we obtain from $A'$.
(This can easily be checked on a basis of $\GL_2(\Z)$, or alternatively, it follows from
the geometric versions of the constructions in Section~\ref{SS:geomZ}.
For example, if $\theta$ acts like $-A_2A_1^{-1}$ then $\theta'=\frac{d\theta-c}{-b\theta+a}$
will act like $(-dA_2A_1^{-1}-c)(bA_2A_1^{-1}+a)^{-1}=-(A_2')(A_1')^{-1}$.)
Thus, to check that the $R_f$ action and balancing map on pairs $M,N$ and $M',N'$ agree, we can check after a
$\GL_2(\Z)$ action on $f$ so that $f_0\ne 0$ (as long as $f\ne0$).  This proves Theorem~\ref{T:bijZ}.

\section{Symmetric tensors}\label{S:sym}

In the map
$$
\bij{based balanced pairs $(M,N)$ of modules for $f$}{$A\in \Z^2 \tensor \Z^n \tensor \Z^n$ with $\Det(A)=f$}.
$$
of Theorem~\ref{T:bijZ}, it is easy to see from the construction that pairs where $M$ and $N$
are the same based module exactly correspond to $A$ such that $A_1$ and $A_2$ are symmetric matrices.

\begin{definition}
A \emph{self balanced module} for a non-zero form $f$ is an 
 $R_f$-module $M$, that is a free rank $n$ $\Z$-module,
and a map of $R_f$-modules $M \tensor_{R_f} M \ra I_f$, such that when the composition
$M \tensor_\Z M \ra M \tensor_{R_f} M \ra I_f \ra V$ is written as a pair of matrices $A_1$ and $A_2$ (using a single choice of basis for $M$),
 we have $\Det(A_1 x_1 + A_2 x_2)= f$.
\end{definition}

We easily conclude the following.
\begin{theorem}\label{T:gensym}
For every non-zero binary $n$-ic form $f$ with coefficients in $\Z$, there is a bijection
$$
\bij{isomorphism classes self balanced modules $M$ for $f$}{$\GL_n(\Z)$ classes of $A\in \Z^2 \tensor \Sym_2 \Z^n $ with $\Det(A)=f$},
$$
where $\Sym_2 \Z^n$ are symmetric $n$ by $n$ matrices, and the action of $g\in\GL_n(\Z)$ is by multiplication on the left by $g$ and right by $g^t$.
\end{theorem}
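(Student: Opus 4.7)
The plan is to deduce Theorem~\ref{T:gensym} from Theorem~\ref{T:bijZ} by restricting to the diagonal case $N=M$. The starting point is the remark preceding the theorem statement: under the bijection of Theorem~\ref{T:bijZ}, based balanced pairs $(M,N)$ in which $N=M$ with the same chosen basis (and symmetric balancing) correspond precisely to tensors $A$ with $A_1,A_2\in\Sym_2\Z^n$. I would verify this from formula~\eqref{E:gotAB}, which reads $(A_i)_{jk}=(-1)^{i+1}\check{\zeta}_{n-i}(m_j\circ n_k)$: once $(m_j)=(n_j)$, the matrices $A_i$ are symmetric if and only if the pairing $\circ$ is symmetric in its arguments, which is exactly the content of a self balanced structure.

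In the converse direction, starting from $A$ symmetric I would use Proposition~\ref{P:howact}: over $\Z[1/f_0]$, $\theta$ acts on $M$ by right multiplication by $-A_2A_1^{-1}$ on row vectors and on $N$ by left multiplication by $-A_1^{-1}A_2$ on column vectors. Symmetry of $A_1,A_2$ gives $(-A_2A_1^{-1})^t=-A_1^{-1}A_2$, so transposition is an $R_f$-module isomorphism $M\to N$ identifying the distinguished bases; this promotes $(M,N)$ to a self balanced module $(M,\circ)$, and the scalar identity $\check{\zeta}_{n-1}(\alpha\circ\beta)=\alpha A_1\beta=\beta A_1\alpha^t$ (using $A_1^t=A_1$), together with Proposition~\ref{P:Istruc}, upgrades to symmetry of the full $R_f$-module map into $I_f$. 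The case $f_0=0$ is reduced to $f_0\ne0$ using the $\GL_2(\Z)$-equivariance from Section~\ref{S:GLinv}, applied to a $\gamma\in\GL_2(\Z)$ that fixes the symmetric condition on $A$ (which is preserved, as $\GL(V)$ acts only on the $V$-factor).

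For the group actions, I would observe that an isomorphism of self balanced modules is precisely a single change of basis of $M$ by some $g\in\GL_n(\Z)$, which corresponds on the tensor side to the diagonal action $A_i\mapsto gA_ig^t$. The diagonal embedding $g\mapsto(g,g)$ lands in $G$ since $\det(g)^2=1$ automatically, so this is a restriction of the $G$-action of Theorem~\ref{T:bijZ} that preserves the symmetric locus and matches the $\GL_n(\Z)$-action in the statement. Passing to orbits on both sides yields the claimed bijection.

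The main obstacle is the reverse direction of the first step: verifying that the transpose identification $M\cong N$ for symmetric $A$ is simultaneously compatible with the $R_f$-action and with the balancing map into $I_f$. The subtlety is that symmetry of the pairing is most easily checked after composition with $I_f\to V=\Z^2$ (where it reduces to the transpose identity for $A_1$), and one needs Proposition~\ref{P:Istruc} to promote this to symmetry at the level of $R_f$-module maps to $I_f$ itself; this is what makes the passage from Theorem~\ref{T:bijZ} to Theorem~\ref{T:gensym} honestly immediate rather than merely plausible.
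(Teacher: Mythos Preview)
Your proposal is correct and follows the same route as the paper. The paper's entire argument is the one-line observation preceding the theorem (``pairs where $M$ and $N$ are the same based module exactly correspond to $A$ such that $A_1$ and $A_2$ are symmetric'') together with the phrase ``We easily conclude the following''; you have simply unpacked this, using Equation~\eqref{E:gotAB}, Proposition~\ref{P:howact}, and Proposition~\ref{P:Istruc} to justify what the paper leaves as evident from the construction.
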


\section{Equivalent formulations of the balancing condition}\label{S:balance}

In order to prove Theorems~\ref{T:specbijI} and \ref{T:primsym} in the Introduction, we will show that for primitive
forms, modules that appear in balanced pairs have unique balance partners, and that for non-degenerate forms, modules that appear
in balanced pairs can be realized as fractional ideals.
First, we will see an equivalent formulation of the definition of balanced.

We define a \emph{characteristic $R_f$-module} $M$ to be a $R_f$-module $M$ which is a free rank $n$ $\Z$-module such that for any element $\zeta \in R_f$
the action of $\zeta$ on $M$ viewed as a $\Z$-module has the same characteristic polynomial as the action
of $\zeta$ on $R_f$ (by multiplication) viewed as a $\Z$-module.  
Fractional ideals of $R_f$ are characteristic modules of $R_f$.

Given two based modules $M$ and $N$ with bases $\alpha_i$ and $\beta_i$ respectively such that $M \subset N$,
the \emph{index} $[N:M]$ is the absolute value of the determinant of the matrix $Q$ with entries in $\Z$ such that
$[\begin{matrix} \alpha_1 & \alpha_2 &\dots & \alpha_n\end{matrix}]=[\begin{matrix} \beta_1 & \beta_2 &\dots & \beta_n\end{matrix}]
\cdot Q$. 
Now we will see that our condition of balanced is equivalent to $M$ characteristic and an index condition on the map
$M\tensor_{R_f} N\ra I_f$.

\begin{proposition}\label{P:eqb}
Consider a non-zero binary form $f$ over $\Z$, and two $R_f$ modules $M$ and $N$, with a $R_f$-module map $M\tesnor_{R_f} N \ra I_f$, such
that $M$ and $N$ are both free rank $n$ $\Z$-modules.  Then this data gives a balanced pair of modules
for $f$ if and only if $M \subset \Hom_{R_f} (N,I_f)$, and $[\Hom_{R_f} (N,J_f):M]=[J_f:I_f]$ 
(with any inclusion of $I_f$ in $J_f$ as $R_f$-modules), and 
either $M$ or $N$ is characteristic.

\end{proposition}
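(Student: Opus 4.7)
The plan is to split the identity $\Det(A_1 x_1 + A_2 x_2) = \pm f$ into two pieces: the leading coefficient $f_0$, captured by an index condition on $A_1$; and the remaining coefficients $f_1,\dots,f_n$, captured by a characteristic polynomial identity for $\theta$ acting on $N$ (or $M$). The two hypotheses on the right of the equivalence will correspond to these two pieces respectively.

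First I would reduce to the case $f_0 \ne 0$. All three right-hand conditions transport cleanly under the $\GL_2(\Z)$-equivariant isomorphisms of $R_f$, $I_f$, and $J_f$ from Section~\ref{S:GLinv}, since those are $\Z$-algebra and $\Z$-module isomorphisms and hence preserve characteristic modules, indices, and the inclusion $I_f \subset J_f$. Since $f \ne 0$, a change of coordinates puts us in the case $f_0 \ne 0$, where the explicit descriptions of Section~\ref{S:binf} apply. In this setting, Proposition~\ref{P:Jstruc} identifies $\Hom_{R_f}(N, J_f) \isom \Hom_\Z(N, \Z)$ via $\check{\zeta}_{n-1}$, and under this identification the adjoint $M \to \Hom_{R_f}(N, I_f) \subset \Hom_{R_f}(N, J_f)$ of the balancing map is exactly the $\Z$-linear map whose matrix in the chosen bases is $A_1$. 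A direct inspection of the bases of $I_f$ and $J_f$ in Section~\ref{S:binf} yields $[J_f : I_f] = |f_0|$, so the combined conditions $M \subset \Hom_{R_f}(N, I_f)$ and $[\Hom_{R_f}(N, J_f) : M] = [J_f : I_f]$ are equivalent to $|\Det(A_1)| = |f_0|$.

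Given $|\Det(A_1)| = |f_0| \ne 0$, I would then show that the full balancing identity is equivalent to $N$ (or, symmetrically, $M$) being characteristic. Since $A_1$ is invertible over $\Z[1/f_0]$, the argument of Proposition~\ref{P:howact} applies: using Corollary~\ref{C:thet} and the $R_f$-linearity of the balancing map, $\theta$ acts on $N_{f_0}$ by $T = -A_1^{-1} A_2$ (the nondegeneracy hypothesis that the proof of Proposition~\ref{P:howact} derives from the balanced condition is now supplied by the index equality). A short computation gives $\Det(tI - T) = \Det(A_1)^{-1} \Det(tA_1 + A_2)$, while $R_f[1/f_0] \isom \Z[1/f_0][\theta]/F(\theta, 1)$ has characteristic polynomial $F(t,1)/f_0$ for the action of $\theta$. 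Since every $\zeta \in R_f$ is a polynomial in $\theta$ over $\Z[1/f_0]$, and since monic integer polynomials agreeing over $\Z[1/f_0][t]$ already agree over $\Z[t]$, the characteristic-module property for $N$ is equivalent to $\Det(A_1)^{-1} \Det(tA_1 + A_2) = F(t,1)/f_0$, i.e.\ to $\Det(tA_1 + A_2) = \pm F(t,1)$ (using $\Det(A_1) = \pm f_0$), which homogenizes to $\Det(A_1 x_1 + A_2 x_2) = \pm f$.

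The main obstacle is coordinating the two halves of the binary form: the index condition pins down only $f_0$, and extracting the remaining coefficients requires the $R_f$-module structure, specifically the interaction through Corollary~\ref{C:thet} between the action of $\theta$ and the two coordinate maps $\check{\zeta}_{n-1}$ and $-\check{\zeta}_{n-2}$ of $I_f \to V$. The symmetric case where $M$ rather than $N$ is characteristic runs via the parallel argument, using that $\theta$ acts on the right of $M_{f_0}$ by $-A_2 A_1^{-1}$.
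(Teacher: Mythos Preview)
Your approach is correct and matches the paper's proof closely. Both reduce to $f_0\ne 0$ via $\GL_2(\Z)$, identify the index condition with $|\Det A_1|=|f_0|$ through the isomorphism $\Hom_{R_f}(N,J_f)\isom\Hom_\Z(N,\Z)$, use the $R_f$-linearity of the pairing together with Corollary~\ref{C:thet} to recover the $\theta$-action as $-A_1^{-1}A_2$ (with faithfulness now coming from $\Det A_1\ne 0$ rather than from the balanced hypothesis), and then read off $\Det(A_1x_1+A_2x_2)=\pm f$ from the characteristic property. The only organizational difference is that you package the argument as two nested equivalences (index condition $\Leftrightarrow$ leading coefficient; characteristic $\Leftrightarrow$ remaining coefficients), whereas the paper proves the two implications separately, invoking the construction $\psi$ of Section~\ref{SS:algZ} for the forward direction.
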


The equality of indexes does not depend on the choice of inclusion of $I_f$ in $J_f$, because any two inclusions differ by multiplication by a non-zero-divisor in $R_f\tensor_\Z \Q$.  This multiplies both $[\Hom_{R_f} (N,J_f):M]$ and $[J_f:I_f]$ by the absolute value of the norm of that element.

\begin{proof}
We can act by $\GL_2(\Z)$ so as to assume $f_0\ne0$.  Then, we use the inclusion of $I_f$ in $J_f$ given in Section~\ref{S:binf}
and see that $[J_f:I_f]=f_0$.  

\begin{lemma}
Suppose we have two $R_f$ modules $M$ and $N$, with a map $M\tesnor_{R_f} N \ra I_f$, such
that $M$ and $N$ are both free rank $n$ $\Z$-modules, either $M$ or $N$ is characteristic, and $M \subset \Hom_{R_f} (N,I_f)$ such that $[\Hom_{R_f} (N,J_f):M]=f_0$.  Let $A$, as usual, denote the map $M\tensor_\Z N \ra V$.  
 We write elements of $M$ as row vectors with entries in $\Z$.  Then $\theta$ acts on $M'=M_{f_0}$ by $-A_2A_1^{-1}$ on the right, and
$\theta$ acts on $N'=N_{f_0}$ by $-A_1^{-1}A_2$ on the left.
Also, $\Det(A_1x_1+A_2x_2)=f$.
\end{lemma}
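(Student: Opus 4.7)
The plan is to convert the hypotheses into matrix identities for $A_1,A_2$ using Proposition~\ref{P:Jstruc}, then read off the $\theta$-action from Corollary~\ref{C:thet}, and finally pin down $\Det(A_1x_1+A_2x_2)$ via the characteristic module hypothesis.

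First I would translate the index condition. Under the isomorphism $\Hom_{R_f}(N,J_f)\stackrel{\check{\zeta}_{n-1}}{\isom}\Hom_\Z(N,\Z)=N^*$ of Proposition~\ref{P:Jstruc}, the composition $M\hookrightarrow\Hom_{R_f}(N,I_f)\hookrightarrow\Hom_{R_f}(N,J_f)\cong N^*$ sends a basis element $m_i\in M$ to the functional $\beta\mapsto\check{\zeta}_{n-1}(m_i\circ\beta)=m_iA_1\beta$, so in the chosen bases it is encoded by the matrix $A_1$. Hence $[\Hom_{R_f}(N,J_f):M]=|\det A_1|$, and comparing with $[J_f:I_f]=|f_0|$ gives $|\det A_1|=|f_0|$; in particular $A_1$ is invertible over $\Z[f_0^{-1}]$.

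Next I would determine the $\theta$-action on $M'$ and $N'$. For $\alpha\in M'$ and $\beta\in N'$, $R_f'$-bilinearity of $\circ$ gives $\theta(\alpha\circ\beta)=\alpha\circ(\theta\beta)=(\theta\alpha)\circ\beta$. Applying $\check{\zeta}_{n-1}$ and invoking Corollary~\ref{C:thet}, the leftmost term becomes $\check{\zeta}_{n-2}(\alpha\circ\beta)=-\alpha A_2\beta$. Writing the action of $\theta$ on $N'$ as left multiplication by a matrix $T$, the middle expression equals $\alpha A_1T\beta$; since this holds for all $\alpha,\beta$, we conclude $A_1T=-A_2$, i.e.\ $T=-A_1^{-1}A_2$. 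The symmetric comparison against the right expression forces $\theta$ to act on $M'$ by right multiplication by $-A_2A_1^{-1}$.

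Finally I would recover the form. From the $\theta$-action just established, the characteristic polynomial of $\theta$ on $M\otimes\Q$ is
\begin{equation*}
\chi_\theta(x)=\det(xI+A_1^{-1}A_2)=\frac{\det(xA_1+A_2)}{\det A_1}=\frac{F(x,1)}{\det A_1}.
\end{equation*}
The characteristic module hypothesis applied to $\zeta_1=f_0\theta\in R_f$, together with $f_0\ne0$, forces this polynomial to equal the characteristic polynomial of $\theta$ on $R_f\otimes\Q=\Q[\theta]/F(\theta,1)$, namely $F(x,1)/f_0$. Both are monic, so their leading coefficients must agree, giving $\det A_1=f_0$ exactly; all remaining coefficients then match, so $\Det(A_1x_1+A_2x_2)=(\det A_1/f_0)\,f=f$. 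The main obstacle is getting the \emph{sign} $\det A_1=f_0$ rather than only $\det A_1=\pm f_0$: the index comparison alone yields only absolute values, and the sign is pinned down purely by the observation that the two characteristic polynomials being compared are both monic of the same degree.
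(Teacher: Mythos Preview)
Your approach is essentially the same as the paper's: identify the map $M\hookrightarrow\Hom_{R_f}(N,J_f)\cong N^*$ with the matrix $A_1$ via Proposition~\ref{P:Jstruc}, deduce $|\det A_1|=|f_0|$ from the index hypothesis so that $A_1$ is invertible after inverting $f_0$, then use Corollary~\ref{C:thet} to read off the $\theta$-action, and finally invoke the characteristic hypothesis for the determinant. Your derivation of the $\theta$-action via the matrix identity $A_1T=-A_2$ is in fact slightly cleaner than the paper's, which passes through a separate faithfulness argument for the pairing.

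There is, however, a genuine gap in your final sign argument. You write that $\det(xA_1+A_2)/\det A_1$ and $F(x,1)/f_0$ are both monic, ``so their leading coefficients must agree, giving $\det A_1=f_0$ exactly.'' But each of these polynomials is monic \emph{automatically}---each is a degree-$n$ polynomial divided by its own leading coefficient---so comparing leading coefficients yields only $1=1$ and carries no information about $\det A_1$ versus $f_0$. From the equality $\det(xA_1+A_2)/\det A_1=F(x,1)/f_0$ you get $\det(xA_1+A_2)=(\det A_1/f_0)F(x,1)$, and since the index condition gives only $|\det A_1|=|f_0|$, you can conclude only $\Det(A_1x_1+A_2x_2)=\pm f$. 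In fact this is exactly what the paper's own proof establishes: despite the lemma being stated with equality $\Det(A_1x_1+A_2x_2)=f$, the paper's argument ends with ``It follows that $\Det(A_1x_1+A_2x_2)=\pm f$,'' and only $\pm f$ is used in the surrounding Proposition~\ref{P:eqb}. So your gap is real, but it coincides with a minor overstatement already present in the paper; the sign cannot be recovered from the unsigned index hypothesis alone.
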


\begin{proof}
We define $m\star n$ to be $\check{\zeta}_{n-1}(m\circ n)$.
By Proposition~\ref{P:Jstruc}, we see that $\star$ is faithful
for both $M$ and $N$ if and only if $\circ$ is.  
We see that $\circ$ is faithful on $M$ since
the index of $M$ in $\Hom_{R_f}(N,J_f)$ is not zero.
If there were
an $n\in N$ such that $ M\circ n =0$ for all $n$, then inverting ${f_0}$
we would find that $M_{f_0}\star n=0$ but
$M_{f_0}=\Hom_{\Z_{f_0}}(N_{f_0},\Z_{f_0})$, and we obtain a
 contradiction since $N_{f_0}$ is a free $\Z_{f_0}$-module and thus
there is some homomorphism from $N_{f_0}$ to $\Z_{f_0}$ which is non-zero on $n$.
 So, we fix an $\alpha\in M$ and
let $\beta$ vary in $N$.  Then $\alpha\theta\star\beta=\check{\zeta}_{n-2}(\alpha\circ\beta)=
-\alpha A_2 \beta=-\alpha A_2 A_1^{-1} A_1 \beta=(-\alpha A_2 A_1^{-1}) \star \beta$.  We similarly obtain the action of $\theta$ on $N$.

We have that $|\Det(A_1)|$ is the index of $M$ in $\Hom_\Z(N,\Z)$ or $\Hom_{R_f}(N,J_f)$, which is
the index of $I_f$ in $J_f$, i.e. $|f_0|$.   Since $M$ is characteristic, we know that $\theta$ and thus $-A_2A_1^{-1}$
acts with characteristic polynomial $F(t,1)/f_0$ on $M_{f_0}$.  It follows
that $\Det(A_1x_1+A_2x_2)=\pm f$.  We can argue similarly if $N$ is characteristic.
\end{proof}

Now, suppose that $M,N$ are balanced.  Then we know that $M,N$ are constructed from an element $A\in Z^2\tensor \Z^n \tensor \Z^n$
such that $\Det(A)=f$.  
We can see from the construction of the action of $\theta$ on $M_{f_0}$ that $M$ is characteristic.  
We have a map $M \ra \Hom_{R_f} (N,I_f)$, and composition with $I_f\subset J_f$ gives
$M \ra \Hom_{R_f} (N,J_f)=\Hom_\Z(N,\Z)$.  The map $M \ra\Hom_\Z(N,\Z)$ is given by $A_1$, and thus
$[\Hom_{R_f} (N,J):M]=|f_0|$, which implies $M \subset \Hom_{R_f} (N,J_f)$, and thus the map $M \ra \Hom_{R_f} (N,I_f)$ is injective as well.
\end{proof}

\begin{corollary}\label{C:Ninv}
For a non-zero form $f$, if $N$ is a finitely generated invertible module for $R_f$, then
there exists a unique balancing partner $M$ for $N$.
\end{corollary}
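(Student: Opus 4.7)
The plan is to produce $M$ explicitly as $M := \Hom_{R_f}(N,I_f)$, equipped with the evaluation pairing $\mathrm{ev}\colon M \tensor_{R_f} N \to I_f$, and then apply Proposition~\ref{P:eqb} to verify both that this is balanced and that no other balancing partner of $N$ exists.

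For existence, since $N$ is invertible there is an inverse module $N^{-1}$ with $N \tensor_{R_f} N^{-1} \cong R_f$, and the Hom--tensor adjunction yields $\Hom_{R_f}(N,P) \cong P \tensor_{R_f} N^{-1}$ for any $R_f$-module $P$; in particular $M \cong I_f \tensor_{R_f} N^{-1}$. I would then verify the three conditions of Proposition~\ref{P:eqb}. The inclusion $M \subset \Hom_{R_f}(N,I_f)$ is tautological, and $M$ is a finitely generated torsion-free $\Z$-module of $\Z$-rank $n$ since tensoring with $\Q$ identifies $M \tensor \Q$ with $I_f \tensor \Q$. Next, $N$ is characteristic: $R_f \tensor_\Z \Q$ is Artinian, so every invertible module over it is free, giving $N \tensor \Q \cong R_f \tensor \Q$ as modules, and hence every $\zeta \in R_f$ acts on $N$ with the same characteristic polynomial as on $R_f$. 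Finally, the index identity $[\Hom_{R_f}(N,J_f):M] = [J_f:I_f]$ reduces, via the adjunction, to showing that tensoring the finite module $J_f/I_f$ with $N^{-1}$ preserves its $\Z$-length.

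For uniqueness, suppose $M'$ is another balancing partner of $N$. Proposition~\ref{P:eqb} identifies $M'$ with an $R_f$-submodule of $\Hom_{R_f}(N,I_f) = M$ satisfying $[\Hom_{R_f}(N,J_f):M'] = [J_f:I_f]$. Since $M$ itself has the same index $[J_f:I_f]$ in $\Hom_{R_f}(N,J_f)$, the inclusion $M' \subset M$ must be an equality, and the balancing maps automatically agree by construction.

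The main obstacle is the length-preservation claim used in the index computation. I would handle it by localizing at each rational prime $p$: the ring $R_f \tensor_\Z \Z_p$ is semi-local, so every invertible module over it is free, giving $N^{-1} \tensor_\Z \Z_p \cong R_f \tensor_\Z \Z_p$; hence the $p$-primary component of any finite $R_f$-module $C$ satisfies $C_p \tensor_{R_f} N^{-1} \cong C_p$, and summing over $p$ gives $|C \tensor_{R_f} N^{-1}| = |C|$. Applied to $C = J_f/I_f$, this yields the needed index identity and completes the verification.
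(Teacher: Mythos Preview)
Your argument is correct and follows the same strategy as the paper: set $M=\Hom_{R_f}(N,I_f)$ with the evaluation pairing, then invoke Proposition~\ref{P:eqb} for both existence and uniqueness. The one stylistic difference is that the paper first realizes $N$ as an invertible fractional ideal of $R_f$ (citing Bourbaki), so that $\Hom_{R_f}(N,J_f)=N^{-1}J_f$ and $\Hom_{R_f}(N,I_f)=N^{-1}I_f$ are literally products of fractional ideals, and then simply asserts $[N^{-1}J_f:N^{-1}I_f]=[J_f:I_f]$. You instead stay module-theoretic and supply the justification the paper omits, namely that tensoring the finite quotient $J_f/I_f$ with $N^{-1}$ preserves $\Z$-length, via localization at each prime $p$ (where $R_f\otimes\Z_{(p)}$ is semilocal and $N^{-1}$ becomes free). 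Your route is slightly longer but more self-contained; the paper's is terser but leans on the fractional-ideal picture.
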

\begin{proof}
If $N$ is a finitely generated invertible module, then $N$ can be realized an an invertible fractional ideal of $R_f$
\cite[II.5.7, Proposition 12]{B}.
 Then $\Hom_{R_f} (N,J_f)=N^{-1}J_f$ and $\Hom_{R_f} (N,I_f)=N^{-1}I_f$.
In that case, $[\Hom_{R_f} (N,J_f):\Hom_{R_f} (N,I_f)]=[J_f:I_f]$ and for $M$ to be balanced with $N$ it is necessary and sufficient that $M=\Hom_{R_f} (N,I_f)$.
\end{proof}

\subsection{Non-degenerate forms}\label{SS:nodeg}
When $f$ is a non-degenerate binary $n$-ic form over $\Z$,
we have the following. 
\begin{proposition}\label{P:charfrac}
If $f$ is a non-degenerate binary $n$-ic form over $\Z$ (i.e. $\disc(f)\ne0$), then all characteristic modules 
can be realized as fractional ideals.  This gives a bijection between isomorphism classes of 
characteristic $R_f$-modules and fractional ideal classes of $R_f$.
\end{proposition}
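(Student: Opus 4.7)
The plan is to use the non-degeneracy hypothesis to force any characteristic module to become free of rank one after tensoring with $\Q$, and then to read off the realization as a fractional ideal from a choice of such an isomorphism.

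First, since $\disc(f)\neq 0$, the $\Q$-algebra $Q_f := R_f \otimes_\Z \Q$ is an étale $\Q$-algebra of dimension $n$, hence a product of number fields $Q_f \isom K_1 \times \cdots \times K_r$ with $\sum_i [K_i:\Q] = n$. Given a characteristic $R_f$-module $M$, the $Q_f$-module $M \otimes_\Z \Q$ decomposes via the idempotents of $Q_f$ as $M \otimes_\Z \Q \isom V_1 \times \cdots \times V_r$ where each $V_i$ is a $K_i$-vector space; set $d_i = \dim_{K_i} V_i$, so $\sum_i d_i [K_i:\Q] = n$. For any $\zeta \in R_f$ with image $(\zeta_1,\dots,\zeta_r)\in Q_f$, the characteristic polynomial of multiplication by $\zeta$ on $R_f$ (viewed as a $\Z$-module) equals $\prod_i \mathrm{charpoly}_{K_i/\Q}(\zeta_i)$, while its characteristic polynomial on $M$ equals $\prod_i \mathrm{charpoly}_{K_i/\Q}(\zeta_i)^{d_i}$. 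The characteristic-module condition forces these to agree for all $\zeta$, and by choosing $\zeta$ whose projections $\zeta_i$ have distinct irreducible minimal polynomials over $\Q$ (available because each $K_i$ has a primitive generator over $\Q$ and we can use the Chinese remainder theorem to mix components freely), we conclude $d_i = 1$ for every $i$. Hence $M\otimes_\Z\Q \isom Q_f$ as $Q_f$-modules.

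Next, fix any $Q_f$-isomorphism $\iota : M\otimes_\Z\Q \xrightarrow{\sim} Q_f$. The image $\iota(M)$ is a finitely generated $R_f$-submodule of $Q_f$ containing a $\Q$-basis of $Q_f$, i.e.\ a fractional $R_f$-ideal. Any two such $\iota$ differ by multiplication by a unit of $Q_f$, so the fractional ideal class of $\iota(M)$ is independent of choice. Conversely, every fractional ideal $I\subset Q_f$ is automatically a characteristic $R_f$-module: it is a free $\Z$-module of rank $n$ because $I\otimes_\Z\Q = Q_f$, and the action of any $\zeta\in R_f$ on $I$ is the restriction of multiplication by $\zeta$ on $Q_f$, so their $\Q$-characteristic polynomials (and thus $\Z$-characteristic polynomials) coincide with that on $R_f$.

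It remains to match the equivalence relations. Given two fractional ideals $I, I'$, any $R_f$-module isomorphism $\psi : I \ra I'$ extends uniquely to a $Q_f$-linear automorphism of $Q_f$, which is multiplication by some unit $\lambda\in Q_f^\times$; thus $I' = \lambda I$, which is the classical notion of equivalent fractional ideals. Conversely, multiplication by a unit of $Q_f$ manifestly gives an $R_f$-module isomorphism between $I$ and $\lambda I$. So the assignment $M\mapsto \iota(M)$ descends to a well-defined bijection between isomorphism classes of characteristic $R_f$-modules and fractional ideal classes of $R_f$.

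The main point where care is needed is the dimension-counting step showing $d_i = 1$ for each $i$: one must verify that the ring $R_f$ contains enough elements whose projections to the factors $K_i$ have characteristic polynomials that are sufficiently ``generic'' to distinguish the multiplicity $d_i$ in the product formula. Non-degeneracy, which guarantees the factors $K_i$ are fields (so that $\mathrm{charpoly}_{K_i/\Q}$ behaves well) and that $R_f\otimes_\Z\Q = Q_f$ (so that projections to each $K_i$ can be arbitrary), is precisely what makes this step work.
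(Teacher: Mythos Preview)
Your proof is correct. Both your argument and the paper's ultimately rest on the same fact---that over the \'etale $\Q$-algebra $Q_f$, a finite module is determined by its dimension over each field factor---but the packaging differs. The paper arranges (via a $\GL_2(\Z)$ action) that $f_0\neq 0$, takes the single distinguished element $\theta$, and invokes rational normal form: since $\theta$ acts on $M\otimes\Q$ and on $Q_f$ with the same \emph{separable} characteristic polynomial, the two $\Q[\theta]$-modules are isomorphic, hence $M\otimes\Q\cong Q_f$ as $Q_f$-modules. You instead decompose $Q_f$ into fields first, split $M\otimes\Q$ accordingly, and run a dimension count using a carefully chosen $\zeta$. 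Your route makes the role of non-degeneracy more explicit (the factors are genuine fields, and one can pick $\zeta$ whose componentwise minimal polynomials are distinct irreducibles), at the cost of having to manufacture that $\zeta$; the paper's route is terser because separability of the characteristic polynomial of the single element $\theta$ does all the work in one stroke. For the bijection between isomorphism classes and ideal classes, your argument (extend an $R_f$-isomorphism to a $Q_f$-linear automorphism of $Q_f$, which must be multiplication by a unit) is the standard one and matches the paper's.
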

\begin{proof}
We assume $f_0\ne 0$ by an action of $\GL_2(\Z)$.  Then, we see that we can put the action of $\theta$
on a characteristic module $M$ in rational normal form over $\Q$, and since it acts with the same
separable characteristic polynomial as the action of $\theta$ on $R_f\tensor_\Z \Q$, in rational normal form these
actions must be the same.  Thus, we can view $M$ as a $\Z$-submodule of $R_f\tensor_\Z \Q$, or a fractional ideal.
Clearly two fractional ideals in the same class give isomorphic modules.  Moreover, a module homomorphism between two fractional ideals
$I_1\ra I_2$ sends $q\in \Q\cap I_1$ to some element $k\in I_2$, and since the map is an $R_f$-module map, we see that
it is multiplication by $k/q$.
\end{proof}

For a fractional ideal $M$, let $|M|$ denote the norm of $M$, given by the index
$[R_f:M]$, which can be defined even if $M$ is not a submodule of $R_f$, since they sit in a common $\Q$-vector space.  Then, we can reformulate the balancing condition in terms of norms.  This is the version of balanced used in 
\cite{HCL1} and \cite{HCL2}.  

\begin{theorem}\label{T:sameBhar}
For non-degenerate $f$, we have a bijection
$$
\bij{isomorphism classes of balanced pairs $(M,N)$ of modules for $f$}{classes of $(M,N)$ where $M$ and $N$ are fractional $R_f$-ideals, $MN\subset I_f$ and $|M||N|=|I_f|$},
$$
where $(M,N)$ and $(M_1,N_1)$ are in the same class if
$M_1=\lambda M$ and $N_1=\lambda^{-1} N$ for some invertible element 
$\lambda\in R_f\tensor_\Z \Q$.
\end{theorem}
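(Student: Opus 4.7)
The plan is to combine Propositions~\ref{P:charfrac} and \ref{P:eqb} with an explicit Gram-determinant computation to translate the abstract balancing condition into the ideal-theoretic conditions of the theorem. Throughout, write $Q_f := R_f \tensor_\Z \Q$ (\'etale over $\Q$ since $\disc(f)\neq 0$), and let $|L|$ denote the covolume of a full lattice $L \sub Q_f \tensor_\Q \R$, normalized so $|R_f|=1$. By the $\GL_2(\Z)$-equivariance exploited in the proof of Theorem~\ref{T:bijZ}, I may assume $f_0 \neq 0$.

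For the forward direction, start with a balanced pair $(M,N,\mu)$. By Proposition~\ref{P:eqb} I may assume $M$ is characteristic. I first argue that $N$ is also characteristic: tensoring the injection $M \hookrightarrow \Hom_{R_f}(N, I_f)$ with $\Q$ (and using $I_f \tensor \Q \isom Q_f$, which holds because $I_f$ is a full lattice in $Q_f$ for non-degenerate $f$) yields an injection $Q_f \hookrightarrow \Hom_{Q_f}(N\tensor\Q, Q_f)$ of $Q_f$-modules of common $\Q$-dimension $n$, hence an isomorphism; since $Q_f$ is a product of fields, this forces $N\tensor\Q \isom Q_f$. By Proposition~\ref{P:charfrac} I realize both $M$ and $N$ as fractional ideals in $Q_f$, each unique up to $Q_f^\times$. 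Extending $\mu$ rationally gives a $Q_f$-linear endomorphism of $Q_f$, i.e.\ multiplication by some $\lambda\in Q_f^\times$; replacing $M$ by $\lambda M$ normalizes $\mu$ to the restriction of multiplication in $Q_f$, so $MN \sub I_f$. The residual ambiguity is exactly the equivalence $(M,N)\sim (\alpha M, \alpha^{-1}N)$.

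The norm equality $|M||N|=|I_f|$ follows from a direct computation. By Lagrange interpolation, $\check\zeta_{n-1}(r) = \Tr_{Q_f/\Q}(r/F'(\theta))$, so the pairing $A_1(m,n) = \check\zeta_{n-1}(mn)$ is the trace pairing between the lattices $M/F'(\theta)$ and $N$ in $Q_f$. The Gram-determinant identity $\det[\Tr(a_i b_j)] = \pm |A||B|\disc(R_f)$ for $\Z$-bases of full lattices $A,B\sub Q_f$, together with $|\operatorname{Nm}(F'(\theta))| = |\disc(f)|/|f_0|^{n-2}$ and $\disc(R_f) = \disc(f)$, yields $|\det A_1| = |M||N|\cdot|f_0|^{n-2}$. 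From the explicit description of $I_f, J_f$ in Section~\ref{S:binf}, $[J_f:R_f]=|f_0|^{n-2}$ and $[J_f:I_f]=|f_0|$, so $|I_f|=1/|f_0|^{n-3}$. Hence $|\det A_1|=|f_0|$ if and only if $|M||N|=|I_f|$. Since, given $M$ characteristic, the condition $\Det(A_1x_1+A_2x_2)=\pm f$ is equivalent to $|\det A_1|=|f_0|$ (via the characteristic polynomial of $-A_1^{-1}A_2 = \theta$ on $M\tensor\Q = Q_f$, which is $F(t,1)/f_0$), the equivalence of the two balancing conditions follows.

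For the reverse direction, start with fractional ideals $(M,N)$ satisfying $MN\sub I_f$ and $|M||N|=|I_f|$, define $\mu$ to be the restriction of multiplication, and observe that both modules are automatically characteristic, $M \sub (I_f:N) \sub (J_f:N)$, and $|\det A_1|=|f_0|$ by the computation above; combined with $M$ characteristic, this gives $\Det(A_1x_1+A_2x_2)=\pm f$, so $(M,N,\mu)$ is balanced. The two constructions are mutually inverse on equivalence classes: the equivalence $(M,N)\sim(\alpha M,\alpha^{-1}N)$ is realized by the multiplication-by-$\alpha$ isomorphism of balanced pairs. The main obstacle is the discriminant bookkeeping in the Gram-determinant calculation; its key ingredients are the identification of $\check\zeta_{n-1}$ as the scaled trace functional $\Tr(\cdot/F'(\theta))$, the standard Gram-determinant formula, and the identity $\disc(R_f)=\disc(f)$ for the rings associated to binary forms.
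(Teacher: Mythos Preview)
Your argument is correct and arrives at the same bijection, but the central index/norm computation is carried out by a genuinely different route than the paper's. The paper proves directly (using Proposition~\ref{P:getcoeff}) that for the specific pair $M=R_f$, $N=J_f$ the pairing matrix $[\check\zeta_{n-1}(\zeta_i\theta^k)]$ has determinant $\pm 1$, and then obtains $|\det A_1|=|M||N|/|J_f|$ for arbitrary fractional ideals by change of basis; this uses only the combinatorics of the $\zeta$-basis already set up in Section~\ref{S:binf}. You instead identify $\check\zeta_{n-1}$ with $\Tr_{Q_f/\Q}(\cdot/F'(\theta))$ via Lagrange interpolation and then invoke the Gram-determinant identity together with $\disc(R_f)=\disc(f)$ and the formula for $|\operatorname{Nm}(F'(\theta))|$. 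Your approach is more classical and perhaps more recognizable to a reader versed in trace duality, at the cost of importing the identity $\disc(R_f)=\disc(f)$ from outside the paper; the paper's approach is entirely self-contained.

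One small point worth tightening: when you write ``multiplication by some $\lambda\in Q_f^\times$'' you have not yet justified invertibility of $\lambda$. This follows immediately from what you proved one sentence earlier---that $M\otimes\Q\hookrightarrow\Hom_{Q_f}(N\otimes\Q,Q_f)$ is an isomorphism---since that map is $m\mapsto(n\mapsto\lambda mn)$; but you should say so. The paper handles this same issue by proving directly that $\tau:MN\to I_f$ is injective over $\Q$ (again via Proposition~\ref{P:getcoeff}). Also, strictly speaking $-A_1^{-1}A_2$ is the matrix of $\theta$ on $N$ rather than $M$ (on $M$ it is $-A_2A_1^{-1}$), though of course the characteristic polynomials agree.
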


\begin{proof}
All modules that appear in balancing pairs are characteristic by Proposition~\ref{P:eqb}, and thus can be realized as fractional ideals.  For a balanced pair $M,N$ of modules, we can take any fractional ideal representative of $M$, but then we choose the
unique representative of $N$ such that the map $M\tensor N \ra I_f$
is just given by $M\tesnor N \ra MN \subset R_f\tesnor_\Z \Q$ with
image landing in $R_f$.   If $M$ and $N$ are fractional ideals of $R_f$, a map $M\tensor_{R_f} N \ra I$ factors through $MN$.

We now argue that $\tau: MN \ra I_f$ is injective.  As usual, we assume $f_0\ne 0$ by a $\GL_2(\Z)$ action if necessary.
We can detect the injectivity after tensoring with $\Q$ because $\Q$ is a flat $\Z$-module.  Over $\Q$ we have that $MN$ is at least rank $n$ because it contains $N$ and thus is rank $n$.  We can take $\theta^k$ as a basis of $MN$, and we see where they map to in $I_f=J_f=\Hom(R,\Q)$. Then $\tau(\theta^k)$ is the map in $\Hom(R,\Q)$ that sends $\zeta_i$ to $\check{\zeta}_{n-1} (\zeta_i \theta^k)$. By Proposition~\ref{P:getcoeff}, we have 
$$\check{\zeta}_{n-1} (\zeta_i \theta^k)=
\begin{cases}
1 &\text{ if $k+i=n-1$ and $i>0$},\\
1/f_0 &\text{ if $k=n-1$ and $i=0$},\\
0 &\text{ otherwise}.
\end{cases}
$$
Thus, we see that $\tau(MN)=\Hom(R,\Q)$, when working over $\Q$, and therefore over $\Z$ we have that $MN \ra I_f$ is injective. 

A map $MN\ra I_f$ is just multiplication by some element of
$R_f\tesnor_\Z \Q$.  The element is not a zero-divisor since
$MN\ra I_f$ is injective, and thus it is invertible in $R_f\tensor_\Z \Q$.  
We can choose that element to be 1 by taking
a different representative for $N$ in its ideal class.  If 
we had chosen a different representative for $M$, this would change the class of $(M,N)$.

Suppose we have a balanced pair $(M,N)$ realized as ideal classes with
$MN\subset I$.  We will show that the index condition for balanced is equivalent to the norm condition in the above theorem.  

\begin{proposition}
Let $f$ be a non-zero form.  If $M$ and $N$ are fractional ideals of $R_f$ with $MN\subset I_f$, then $[\Hom_{R_f} (N,J_f):M]=[J_f:I_f]$
if and only if  and $|M||N|=|I_f|$. 
\end{proposition}
\begin{proof}
We can act by $\GL_2(\Z)$ so as to assume $f_0\ne0$.
We claim that $|M||N|$ is the product of $|J_f|$ with the determinant of the pairing
$\zeta_{n-1}(mn)$.
When $M=R_f$ and $N=J_f$, we see from Proposition~\ref{P:getcoeff} that the determinant of the pairing is 
$1$, and thus the claim is true.  If we change $\Q$-bases from $R_f, J_f$ to $M,N$, we change the determinant of the pairing
by $N(M)N(N)/N(J_f)$ and thus the determinant of the pairing $\zeta_{n-1}(mn)$ is $|M||N|/|J_f|$.

The index of $M$ in $\Hom_{R_f}(N,J_f)$ is just the index of $M$ in $\Hom_\Z(N,\Z)$, which is giving by the pairing
$\zeta_{n-1}(mn)$.  Thus $[\Hom_{R_f}(N,J_f):M]=|M||N|/|J_f|$.  We see that $[\Hom_{R_f} (N,J_f):M]=[J_f:I_f]$ if and only if
$|M||N|=|I_f|$.
\end{proof}

The theorem now follows from the above proposition.

\end{proof}

For symmetric tensors, we can make a similar argument to prove the following.

\begin{theorem}
For non-degenerate $f$, we have a bijection
$$
\bij{isomorphism classes of self balanced of modules $M$ for $f$}{classes of $(M,k)$ where $M$ is a fractional $R_f$-ideal,
$k$ is an invertible element of $R_f\tensor_\Z \Q$, and $M^2 k\subset I_f$ and $|M|^2|(k)|=|I_f|$},
$$
where $(M,k)$ and $(M_1,k_1)$ are in the same class if
$M_1=\lambda M$ and $k_1=\lambda^{-2} k$ for some invertible element 
$\lambda\in R_f\tensor_\Z \Q$.
\end{theorem}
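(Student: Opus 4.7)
The plan is to adapt the proof of Theorem~\ref{T:sameBhar} to the symmetric setting, where the identification $M=N$ forces us to track an extra scalar $k \in R_f \tensor_\Z \Q$ rather than being able to adjust two ideal representatives independently. Starting with a self balanced module $M$, I would apply Proposition~\ref{P:eqb} to the balanced pair $(M,M)$ to conclude that $M$ is characteristic, and then invoke Proposition~\ref{P:charfrac} (which applies because $f$ is non-degenerate) to realize $M$ as a fractional ideal of $R_f$, uniquely up to multiplication by some invertible $\lambda \in R_f \tensor_\Z \Q$.

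Next, I would observe that the balancing map factors as $M \tensor_{R_f} M \to M^2 \to I_f$, where $M^2$ denotes the usual fractional-ideal product inside $R_f \tensor_\Z \Q$. Any $R_f$-module homomorphism between two fractional ideals is multiplication by some element of $R_f \tensor_\Z \Q$, so the second map is multiplication by some $k$, yielding $M^2 k \sub I_f$. The needed injectivity of $M^2 \to I_f$ (so that $k$ is nonzero, hence invertible, since non-degeneracy of $f$ makes $R_f \tensor_\Z \Q$ \'etale and therefore reduced) follows from the same argument used in Theorem~\ref{T:sameBhar}: after a $\GZ$ reduction to $f_0 \neq 0$ and tensoring with $\Q$, Proposition~\ref{P:getcoeff} shows that the images $\tau(\theta^i)$ form a basis of $\Hom(R_f \tensor_\Z \Q,\Q)$. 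Replacing $M$ by $\lambda M$ replaces $M^2$ by $\lambda^2 M^2$, so preserving the same map to $I_f$ forces $k \mapsto \lambda^{-2} k$, matching the stated equivalence relation on the pairs $(M,k)$.

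Finally, I would translate the index condition in Proposition~\ref{P:eqb} into the norm equality $|M|^2|(k)|=|I_f|$. Mirroring the norm computation in the proof of Theorem~\ref{T:sameBhar}, the pairing $(m_1,m_2)\mapsto \check{\zeta}_{n-1}(k\,m_1 m_2)$ on $M\times M$ has determinant $|M|^2|(k)|/|J_f|$, which one checks in the base case $M=R_f$, $k=1$ and propagates via the change of $\Z$-basis calculation. This determinant equals $[\Hom_{R_f}(M,J_f):M]$, and the balanced condition $[\Hom_{R_f}(M,J_f):M] = [J_f:I_f] = |I_f|/|J_f|$ becomes $|M|^2|(k)|=|I_f|$. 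The inverse direction is then automatic: from a class $(M,k)$, the map $(m_1,m_2)\mapsto k\,m_1 m_2$ lands in $I_f$ by hypothesis, gives $M$ a self balanced structure, and the norm condition translates back through Proposition~\ref{P:eqb}. The main technical hurdle is the injectivity of $M^2 \to I_f$, which rests on Proposition~\ref{P:getcoeff} together with non-degeneracy; once this is handled the rest is bookkeeping with the $\lambda^{\pm 2}$ scaling that replaces the asymmetric rescaling $(M,N)\mapsto (\lambda M,\lambda^{-1}N)$ of Theorem~\ref{T:sameBhar}.
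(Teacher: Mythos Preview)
Your proposal is correct and is precisely the argument the paper intends: the paper's own proof of this theorem is the single sentence ``For symmetric tensors, we can make a similar argument,'' referring back to Theorem~\ref{T:sameBhar}, and you have faithfully carried out that adaptation, correctly replacing the freedom to rescale $N$ independently by the introduction of the scalar $k$ and the $\lambda^{-2}$ equivalence. One small simplification: rather than redoing the base case $M=R_f$, $k=1$ for the determinant of the pairing, you can observe directly that the pairing $(m_1,m_2)\mapsto \check{\zeta}_{n-1}(k\,m_1 m_2)$ on $M\times M$ is the same as the non-symmetric pairing on $M\times kM$, so the proposition inside the proof of Theorem~\ref{T:sameBhar} already gives its determinant as $|M|\,|kM|/|J_f|=|M|^2|(k)|/|J_f|$.
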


\subsection{Primitive forms}\label{SS:prim}
If $f$ is primitive them $I_f$ and $J_f$ are invertible $R_f$ modules
\cite[Proposition 2.1]{Bf}.  For general non-zero forms, we saw in Corollary~\ref{C:Ninv} that invertible ideals have unique balancing partners.  For primitive $f$ we have the following.

\begin{proposition}\label{P:primu}
If $f$ is a primitive non-degenerate form, and $N$ is a characteristic $R_f$-module, then there exists a unique balancing partner $M$ for $N$
(i.e. an $R_f$-module $M$ and map $M\tensor_{R_f} N \ra I_f$ 
that gives a balanced pair).
\end{proposition}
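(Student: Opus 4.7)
The plan is to exhibit the unique balancing partner $M$ of $N$ as the colon ideal $(I_f : N)$ inside $Q := R_f \otimes_\Z \Q$, and to verify balance via Proposition~\ref{P:eqb}.

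First, since $f$ is non-degenerate and $N$ is characteristic, Proposition~\ref{P:charfrac} realizes $N$ as a fractional $R_f$-ideal in $Q$. Since $f$ is primitive, \cite[Proposition 2.1]{Bf} makes $I_f$ and $J_f$ invertible fractional ideals of $R_f$. I then define
$$M := (I_f : N) = \{x \in Q : xN \subset I_f\},$$
which equals $N^\vee \cdot I_f$ (ideal product in $Q$, with $N^\vee := (R_f : N) = \Hom_{R_f}(N, R_f)$) by the invertibility of $I_f$. In particular $M$ is a fractional ideal, free of rank $n$ over $\Z$, and I equip it with the balancing map $M \otimes_{R_f} N \to I_f$ given by multiplication in $Q$.

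To verify $(M, N)$ is balanced using Proposition~\ref{P:eqb}: the inclusion $M \subseteq \Hom_{R_f}(N, I_f)$ holds (with equality) by construction, and $N$ is characteristic by hypothesis. For the index condition, the invertibility of $J_f$ yields $\Hom_{R_f}(N, J_f) = N^\vee \cdot J_f$, so I must show $[N^\vee J_f : N^\vee I_f] = [J_f : I_f]$. Writing $N^\vee I_f = (N^\vee J_f) \cdot (J_f^{-1} I_f)$ with $J_f^{-1} I_f$ an invertible ideal of $R_f$, this reduces to the identity $[L : LK] = [R_f : K]$ for any fractional ideal $L$ and any invertible ideal $K$, a local computation (invertible ideals are locally principal, where the statement is the standard fact that multiplication by a non-zero-divisor scales lattice index by its norm).

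For uniqueness, let $(M', N, \phi')$ be any balanced pair. The lemma inside the proof of Proposition~\ref{P:eqb} (applied after a $\GL_2(\Z)$ shift making $f_0 \neq 0$) shows that $\theta$ acts on $M'_{f_0}$ as $-A_2 A_1^{-1}$ with characteristic polynomial $F(t, 1)/f_0$, so $M'$ is itself characteristic, and Proposition~\ref{P:charfrac} realizes it as a fractional ideal in $Q$, unique up to $Q^*$-scaling. After $\otimes_\Z \Q$, the balancing map $\phi'$ becomes a $Q$-endomorphism of $Q$, hence multiplication by some $\lambda \in Q^*$; rescaling the embedding of $M'$ by $\lambda^{-1}$, I may assume $\phi'$ is multiplication in $Q$. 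Then $M' N \subset I_f$ gives $M' \subset (I_f : N) = M$, and the balanced norm equality $|M'||N| = |I_f| = |M||N|$ forces $|M'| = |M|$, hence $M' = M$ as sublattices of $Q$.

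The main obstacle is the index identity $[N^\vee J_f : N^\vee I_f] = [J_f : I_f]$: since $N^\vee$ is not assumed invertible, one cannot cancel it directly, and one must use the invertibility of $I_f$ and $J_f$ to isolate the invertible factor $J_f^{-1} I_f$ and then invoke the lattice-scaling lemma, justified by localization. Once this is in place, the uniqueness step is essentially bookkeeping with norms inside $Q$.
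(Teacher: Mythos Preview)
Your proof is correct and takes essentially the same approach as the paper: both exhibit $M=\Hom_{R_f}(N,I_f)=(I_f:N)$ and verify the criterion of Proposition~\ref{P:eqb} by using invertibility of $I_f$ and $J_f$ to show $\Hom_{R_f}(N,J_f)=\Hom_{R_f}(N,I_f)\cdot I_f^{-1}J_f$, whence $[\Hom_{R_f}(N,J_f):\Hom_{R_f}(N,I_f)]=[J_f:I_f]$. Your uniqueness argument via fractional-ideal realization and norms is more elaborate than necessary---the paper simply notes that any balancing partner $M'$ injects into $\Hom_{R_f}(N,I_f)$ with $[\Hom_{R_f}(N,J_f):M']=[J_f:I_f]=[\Hom_{R_f}(N,J_f):\Hom_{R_f}(N,I_f)]$, forcing $M'=\Hom_{R_f}(N,I_f)$ directly.
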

\begin{proof}
In this case, we see that $[\Hom_{R_f} (N,J_f):\Hom_{R_f} (N,I_f)]=[J_f:I_f]$.  This is because
$\Hom_{R_f} (N,J_f)$ and $\Hom_{R_f} (N,I_f)$ are naturally realized as fractional $R_f$ ideals.  
Then we see that $\Hom_{R_f} (N,J_f)J_f^{-1} I_f\subset \Hom_{R_f} (N,I_f)$ and
$\Hom_{R_f} (N,I_f)I_f^{-1} J_f\subset \Hom_{R_f} (N,J_f)$.  Thus $\Hom_{R_f} (N,J_f)=\Hom_{R_f} (N,I_f)I_f^{-1} J_f$,
and $[\Hom_{R_f} (N,J_f):\Hom_{R_f} (N,I_f)]$ is the norm of $J_f I_f^{-1}$, which is $[J_f:I_f]$.
Then, for $M$ to be balanced with $N$ it is necessary and sufficient that $M=\Hom_{R_f} (N,I_f)$.
\end{proof}

Theorem~\ref{T:specbijI} now follows from Propositions~\ref{P:primu} and 
\ref{P:charfrac} and Theorem~\ref{T:bijZ}.  
We can also apply Proposition~\ref{P:primu} to symmetric tensors.
\begin{theorem}\label{T:primequiv}
For non-degenerate primitive $f$, we have a bijection
$$
\bij{isomorphism classes of self balanced of modules $M$ for $f$}{classes of $(M,k)$ where $M$ is a fractional $R_f$-ideal,
$k$ is an invertible element of $R_f\tensor_\Z \Q$, and $M=(I_fk:M)$},
$$
where $(M,k)$ and $(M_1,k_1)$ are in the same class if
$M_1=\lambda M$ and $k_1=\lambda^{2} k$ for some invertible element 
$\lambda\in R_f\tensor_\Z \Q$, and $(I_fk:M)$ is the fractional ideal of elements $x$ such that
$xM\subset I_fk$.
\end{theorem}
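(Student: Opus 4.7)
The plan is to derive Theorem~\ref{T:primequiv} from the preceding non-degenerate symmetric bijection---which describes self balanced modules as classes $(M,k)$ with $M^{2}k\subset I_{f}$, $|M|^{2}|k|=|I_{f}|$, and equivalence $(M,k)\sim(\lambda M,\lambda^{-2}k)$---by a reparametrization $k\mapsto k^{-1}$ together with Proposition~\ref{P:primu}. After substituting $k^{-1}$ for $k$ throughout, the preceding theorem describes self balanced modules instead as classes $(M,k)$ satisfying $M^{2}\subset kI_{f}$ and $|M|^{2}=|k||I_{f}|$, with equivalence $(M,k)\sim(\lambda M,\lambda^{2}k)$, which matches the equivalence of Theorem~\ref{T:primequiv}. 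It therefore remains, under the assumption that $f$ is primitive and non-degenerate, to show that this pair of conditions is equivalent to the single equation $M=(I_{f}k:M)$.

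The inclusion $M^{2}\subset kI_{f}$ is, by definition of the colon, the same as $M\subset(I_{f}k:M)$, so the task reduces to promoting this inclusion to an equality using the norm condition. For that I would compute $|(I_{f}k:M)|$. Since $k\in(R_{f}\tensor_{\Z}\Q)^{*}$, a direct check gives $(I_{f}k:M)=k\cdot(I_{f}:M)$, hence $|(I_{f}k:M)|=|k|\cdot|(I_{f}:M)|$. Now primitivity enters: every fractional $R_{f}$-ideal $M$ is characteristic in the non-degenerate setting (since $M\tensor_{\Z}\Q=R_{f}\tensor_{\Z}\Q$ as $R_{f}\tensor_{\Z}\Q$-modules), so by Proposition~\ref{P:primu} its unique balancing partner is $(I_{f}:M)$, and Theorem~\ref{T:sameBhar} supplies $|M|\cdot|(I_{f}:M)|=|I_{f}|$. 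Combining yields $|(I_{f}k:M)|=|k||I_{f}|/|M|$, which equals $|M|$ precisely when $|M|^{2}=|k||I_{f}|$. Since an inclusion of fractional ideals with equal norms must be an equality (for $M\subset N$ the index $[N:M]$ equals $|M|/|N|$, a positive integer that is $1$ iff $M=N$), the inclusion $M\subset(I_{f}k:M)$ becomes equality exactly under the norm hypothesis.

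The converse is then immediate: $M=(I_{f}k:M)$ directly gives $M^{2}\subset I_{f}k$ from the definition of the colon ideal, and the identical norm computation forces $|M|^{2}=|k||I_{f}|$, so the reparametrized datum satisfies the hypotheses of the preceding theorem and thereby yields a self balanced module. The main technical obstacle is the norm identity $|(I_{f}:M)|=|I_{f}|/|M|$ for arbitrary, possibly non-invertible, $M$; this is precisely the point at which primitivity of $f$ is essential, since it is the invertibility of $I_{f}$ that drives Proposition~\ref{P:primu} and produces a well-defined balancing partner of $M$ with the correct norm.
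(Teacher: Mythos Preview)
Your proposal is correct and follows essentially the same route as the paper, which simply indicates that Theorem~\ref{T:primequiv} is obtained by applying Proposition~\ref{P:primu} to the preceding symmetric (non-degenerate) theorem. You have filled in the details the paper leaves implicit: the reparametrization $k\mapsto k^{-1}$ to align the equivalence relations, and the identification of the unique balancing partner $(I_f:M)$ via Proposition~\ref{P:primu} together with the norm identity $|M|\cdot|(I_f:M)|=|I_f|$ from (the proposition inside the proof of) Theorem~\ref{T:sameBhar}, which is exactly what converts the pair of conditions $M^2\subset I_fk$, $|M|^2=|k||I_f|$ into the single equation $M=(I_fk:M)$.
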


\section{Main theorem over an arbitrary base}\label{S:arbbase}
The proof of Theorem~\ref{T:bijZ} works over an arbitrary base with some modifications.  Let $S$ be a scheme.  We
consider binary $n$-ic forms with coefficients in $\OS$, i.e. $f_0x_1^n +f_1 x_1^{n-1}x_2 +\dots +f_nx_2^n$ with $f_i\in \OS$.
We say such a form is a \emph{zero-divisor} if it is a zero divisor in the $\OS$-algebra $\OS[x,y]$, which means
that for some open $\mathcal{U}$ of $S$, that $f$ is zero divisor in $\OS[x,y](\mathcal{U})$.
We can construct an $\OS$-module $R_f$ 
and an $R_f$-module $I_f$ by using the construction
of Section~\ref{S:binf} for the universal form over $\Z[f_0,\dots,f_n]$ and then pulling back to $S$ with 
the map given by our desired form.  From this construction, we inherit a map of $\OS$-modules
$I_f \ra V=\OS^{ 2}$.

\begin{definition}
 A \emph{based balanced pair of modules} for $f$ is a 
pair of $R_f$-modules $M$ and $N$, a choice of basis
$M \isom \OS^n$ and $N \isom \OS^n$, and a map of $R_f$-modules
$M \tensor_{R_f} N \ra I_f$, such that when the composition
$M \tensor_\OS N \ra M \tensor_{R_f} N \ra I_f \ra V$ is written as a pair of matrices
$A_1$ and $A_2$, we have $\Det(A_1 x_1 + A_2 x_2)=f$.
 A \emph{balanced free pair of modules} for $f$ is a 
pair of $R_f$-modules $M$ and $N$, each a free rank $n$ $\OS$-module, and a map of $R_f$-modules
$M \tensor_{R_f} N \ra I_f$, such that when the composition
$M \tensor_\OS N \ra M \tensor_{R_f} N \ra I_f \ra V$ is written as a pair of matrices $A_1$ and $A_2$,
we have $\Det(A)=fu$, where $u$ is a unit in $\OS$. 
Given a balanced pair of modules for a non-zero-divisor form $f$, there is
a unique choice of generator $\chi$ of $\wn M \tensor \wn N$
such that $\Det(A_1 x_1 + A_2 x_2)=f$
when constructing $A$ with bases of $M$ and $N$ that give $\chi\in \wn M \tensor \wn N$, we obtain  $\Det(A)=f$.

\end{definition}

\begin{theorem}
For every non-zero-divisor binary $n$-ic form $f$ with coefficients in $\OS$, there is a bijection
$$
\bij{based balanced pairs $(M,N)$ of modules for $f$}{$A\in \OS^2 \tensor \OS^n \tensor \OS^n$ with $\Det(A)=f$}.
$$
Let $G$ be the subgroup of $\GL_n(\OS)\times\GL_n(\OS)$ of elements $(g_1,g_2)$ such that
$\Det(g_1)\Det(g_2)=1$.  Then, $G$ acts equivariantly in the above bijection
(acting of the bases of $M$ and $N$), and we obtain a bijection
$$
\bij{isomorphism classes of balanced free pairs $(M,N)$ of modules for $f$}{$G$-classes of $A\in \OS^2 \tensor \OS^n \tensor \OS^n$ with $\Det(A)=f$}.
$$
\end{theorem}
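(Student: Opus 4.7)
The plan is to upgrade the proof of Theorem~\ref{T:bijZ} to an arbitrary base by observing that every construction and identity in Sections~\ref{SS:algZ}--\ref{S:GLinv} is already formulated over the polynomial ring $\Lambda = \Z[u_{ijk}]$, where $c_0 = \Det(\mathcal{C}_1)$ is automatically a non-zero-divisor, and therefore pulls back to any $\OS$.

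I define $\psi$ by pullback: the entries $a_{ijk}$ of $A \in \OS^2 \tensor \OS^n \tensor \OS^n$ determine a morphism $s \colon S \to \Spec \Lambda$ sending the universal tensor $\mathcal{C}$ to $A$, and I set $\psi(A) := s^{*}(M_\mathcal{C}, N_\mathcal{C}, \circ_\mathcal{C})$ for the universal based balanced pair built in Section~\ref{SS:algZ}. Lemma~\ref{L:nodenom} guarantees that the matrices expressing the action of each $\zeta_k$ already lie in $\Lambda$ rather than only in $\Lambda[1/c_0]$, so they pull back to honest $\OS$-endomorphisms; the remaining conditions---the ring-action axioms for $R_f$, the $R_f$-linearity of $\circ$, the fact that $\circ$ lands in $I_f$ (via Lemma~\ref{L:inI}), and the identity $\Det(A_1 x_1 + A_2 x_2) = f$---are polynomial identities in the $u_{ijk}$ and $f_i$ already verified over $\Lambda$ in Section~\ref{SS:algZ}, so they persist after $s^{*}$. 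The map $\phi$ records the pair of matrices of $M \tensor_\OS N \to I_f \to V$ using the canonical map of Section~\ref{S:binf}, is functorial in $S$ by construction, and satisfies $\phi \circ \psi = \mathrm{id}$ by pulling back the universal version of Equation~\eqref{E:gotAB}, $\check{\zeta}_{n-1}(m_j \circ n_k) = u_{1jk}$ and $-\check{\zeta}_{n-2}(m_j \circ n_k) = u_{2jk}$.

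The main obstacle is the reverse composite $\psi \circ \phi = \mathrm{id}$: given $(M,N,\circ)$ over $\OS$ with $\phi(M,N,\circ) = A$, I must identify it with $\psi(A)$. Proposition~\ref{P:Istruc} pulls back to $\OS$ (its proof is a polynomial argument using Proposition~\ref{P:getcoeff} together with the basis description of $I_f$, both verified over $\Lambda$) and shows that the balancing map of any based balanced pair is determined by its composition to $V$, so it suffices to pin down the $R_f$-actions on $M$ and $N$. Unlike the integral case, one cannot in general reduce to $f_0$ a non-zero-divisor by the $\GL_2(\OS)$ action of Section~\ref{S:GLinv}, because all of the $f_i$ may be zero-divisors in $\OS$; instead I argue universally. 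Combining the $R_f$-linearity of $\circ$ with Proposition~\ref{P:getcoeff} and Corollary~\ref{C:thet} expresses the action of $\zeta_k$ on $M$ and $N$ as explicit polynomials in $A_1, A_2, f_0, \ldots, f_n$; in the universal setting over $\Lambda$, these polynomials must equal the action on $M_\mathcal{C}$ and $N_\mathcal{C}$ (by Proposition~\ref{P:howact} after inverting the non-zero-divisor $c_0$, then extended over $\Lambda$ by Lemma~\ref{L:nodenom}). Pulling the resulting polynomial identity back to $\OS$ forces the action in any based balanced pair with $\phi$-image $A$ to coincide with the action in $\psi(A)$. The equivariant second bijection then follows from pulling back the $\GL(V)$-equivariance of Section~\ref{S:GLinv} together with the manifest compatibility of the whole construction with the $G$-action on bases.
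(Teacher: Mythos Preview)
Your setup of $\psi$ by pullback, the verification $\phi\circ\psi=\mathrm{id}$ via Equation~\eqref{E:gotAB}, and the reduction of the balancing map to the $R_f$-action via Proposition~\ref{P:Istruc} are all fine and match the paper. The gap is in the sentence ``Combining the $R_f$-linearity of $\circ$ with Proposition~\ref{P:getcoeff} and Corollary~\ref{C:thet} expresses the action of $\zeta_k$ on $M$ and $N$ as explicit polynomials in $A_1, A_2, f_0,\ldots,f_n$.'' That claim is not justified, and the subsequent ``pull back the polynomial identity'' step does not go through.

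Here is the problem. A based balanced pair $(M,N,\circ)$ over $\OS$ is \emph{not} itself a pullback from $\Lambda$; only the tensor $A=\phi(M,N)$ is. The unknown is the matrix $Z_k$ by which $\zeta_k$ acts on $M$. The axioms of a balanced pair together with Proposition~\ref{P:getcoeff} give you, for each $j,\ell$, the value of $\check{\zeta}_{n-1}(\zeta_k(m_j\circ n_\ell))$ and $\check{\zeta}_{n-2}(\zeta_k(m_j\circ n_\ell))$, i.e.\ the entries of $Z_kA_1$ and $Z_kA_2$, but only in terms of the full element $m_j\circ n_\ell\in I_f$, of which $A$ records merely the two components $\check{\zeta}_{n-1},\check{\zeta}_{n-2}$. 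So you cannot read off $Z_kA_1$, let alone $Z_k$, from $A$ alone without inverting something. Over $\Lambda$ this is harmless because $c_0$ is a non-zero-divisor and Proposition~\ref{P:howact} applies directly; but that uniqueness statement concerns balanced pairs \emph{over $\Lambda$}, and there is no mechanism to transport it to an arbitrary balanced pair over $\OS$ where every $f_i$ may be a zero-divisor.

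The paper's proof supplies exactly the missing step. Working locally with $S=\Spec B$, it embeds $B$ into the total ring of fractions $E$ of $B[x,y]$; since $B\hookrightarrow B[x,y]\hookrightarrow E$, equality of $\OS$-endomorphisms of the free modules $M,N$ can be checked after $\otimes_B E$. Over $E$ one applies the substitution $G(t_1,t_2)=F(xt_1,\,yt_1+\tfrac{1}{x}t_2)$, a $\GL_2(E)$-move making the new leading coefficient equal to $f$ itself, which is a unit in $E$. Now Proposition~\ref{P:howact} applies verbatim, forcing the $R_f$-action to be the one coming from $\psi(A)$. This is precisely the device needed to replace your unproven ``universal polynomial identity'' claim.
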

\begin{proof}
To construct a based balanced pair of modules from $A\in \OS^2 \tensor \OS^n \tensor \OS^n$, we can simply pullback the 
construction from the universal tensor (and we call this construction $\psi$).
Again, the balancing  map composed with $I_f\ra V$ gives the construction $\phi$ of an element of
$\OS^2 \tensor \OS^n \tensor \OS^n$ from a based balanced pair.
Now, suppose we have $(M,N)$, a based balanced pair of modules for $f$, and $\phi(M,N)=A$ and
$\psi(A)=(M',N')$.  We need to check that the action of $R_f$ is the same on $M$ and $M'$
(and $N$ and $N'$), and that the balancing maps agree.  It suffices to check this everywhere locally over $S$, and so
we can assume that $S$ is affine, and $S=\Spec B$.  Then, if suffices to check in a larger ring, so we
let $E$ be the ring obtained from inverting all the non-zero-divisors in $B[x,y]$.

 We have that $B[x,y]\subset E$.
We see that $x$ is not a zero divisor in
$B[x,y]$, because $xg=0$ implies that the leading coefficient of $g$ is 0.  We consider
$G(t_1,t_2)=F(xt_1,yt_1+\frac{1}{x}t_2)$.  This is a binary $n$-ic form in variables $t_i$ with coefficients in $E$.
Over $E$ we see it is a $\GL_2(E)$ transformation of $f$.  
We have that $G(1,0)=F(x,y)$, and thus $f$ is the leading coefficient of the new form.
However, $f$ has an inverse in $E$ and thus is not a zero divisor.  By the $\GL_2$ invariance of our constructions,
we can reduce to checking in the case where $f_0$ is not a zero divisor.  In this case we can prove
Proposition~\ref{P:howact} just as in the case of $\Z$.
\end{proof}

In fact, we can consider a completely general binary $n$-ic form
over $S$ given by a locally free rank $2$ $\OS$-module $V$, a locally free rank 1 $\OS$-module $L$, and a global section
$f\in \Sym^n V \tensor L$.  
We say a form $f$ is a \emph{zero-divisor} if it is a zero divisor on any open
$\mathcal{U}$ of $S$ on which $V$ and $L$ are free (and in this case the notion of zero-divisor is defined above).
When $f$ is not a zero-divisor, we have an associated $\OS$-algebra $R_f:=\pi_*(\O_{T_f})$, where
$T_f$ is the subscheme of $\P(V)$ cut out by $f$, and $\pi: T_f \ra S$.
We have line bundles $\O_{T_f}(k)$ on $T_f$ pulled back from $\O(k)$ on $\P(V)$, and we define
$I_f:= \pi_*(\O_{T_f}(n-3))\tensor (\wedge^2 V)^{\tensor 2} \tensor L $, which has an action 
of $R_f$ through the first factor.
Then we have a natural map $I_f\ra V^* \tesnor \wedge^2 V\isom V$ as in \cite[Equations (3.8) and (3.9)]{Bf}.  
Locally on $S$ where $V$ and $L$ are free, the constructions of $R_f$ and $I_f$ pullback from the constructions 
of $R_f$ and $I_f$ for the universal form over $\Z[f_0,\dots,f_n]$ as given in Section~\ref{S:binf} and used at the start of this section.
See \cite[Section 3]{Bf} for more details of these constructions.

We now consider $A\in V\tensor U\tensor W$, where $U$ and $W$ are locally free rank $n$ $\OS$-modules with
an orientation isomorphism $\wn U \tensor \wn W\isom L$.  An isomorphism between
$A\in V\tensor U\tensor W$ and $A'\in V\tensor U'\tensor W'$ is given by isomorphisms $U\isom U'$ and
$W \isom W'$ that take $A$ to $A'$ and respect the orientations.  We have the determinant 
 $\Det(A)\in\Sym^n V\tensor \wn U \tensor \wn V\isom \Sym^n V \tensor L$. 
(see \cite[Section 8.2]{Quartic} for the details of this kind of construction).
 Given a scheme $S$ and a locally free $\OS$-module $U$, we let $U^*$ denote the $\OS$-module $\sHom_{\OS} (U,\OS)$,
even if $U$ is also a module for another sheaf of algebras.

\begin{definition}
 A \emph{balanced pair of modules} for a non-zero-divisor $f$ is a 
pair of $R_f$-modules $M$ and $N$, each a locally free rank $n$ $\OS$-module such that $\wn M \tensor \wn N\isom L^*$, and a map of $R_f$-modules $M \tensor_{R_f} N \ra I_f$, such that when the composition
$M \tensor_\OS N \ra M \tensor_{R_f} N \ra I_f \ra V$ is written as $A\in M^* \tesnor N^* \tensor V$
 the image of $\Det(A)$ in $\Sym^n V \tesnor L$ (via an isomorphism $\wn M^* \tensor \wn N^*\isom L$) is $fu$, where $u$ is a unit in $\OS$. 
When $f$ is a non-zero-divisor, given that $\wn M \tensor \wn N\isom L^*$, there is a unique choice of isomorphism
 so that the image of $\Det(A)$ is $f$. 
\end{definition}

\begin{theorem}\label{T:ARB}
For every non-zero-divisor binary $n$-ic form $f\in \Sym^n V \tensor L$, there is a bijection
$$
\bij{isomorphism classes of balanced pairs $(M,N)$ of modules for $f$}{isomorphism classes of $A\in V\tensor U\tensor W$, where $U$ and $W$ are locally free rank $n$ $\OS$-modules with
an orientation isomorphism $\wn U \tensor \wn W\isom L$, and $\Det(A)=f$}.
$$
\end{theorem}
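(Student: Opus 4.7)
The plan is to deduce Theorem~\ref{T:ARB} from the preceding theorem in this section by a local-to-global (descent) argument. Both sides of the bijection form sheaves of sets on $S$ in the Zariski topology: the left side because $R_f$-module data, locally free modules, and maps $M \tensor_{R_f} N \ra I_f$ all glue; the right side because sections of $V \tensor U \tensor W$ and isomorphisms of locally free sheaves glue. Since $S$ is covered by opens on which $V$, $L$, $U$, and $W$ are simultaneously free, it suffices to prove the bijection on such opens and to verify naturality under restriction and change of trivialization.

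On an open $\mathcal{U} \subset S$ over which $V, L, U, W$ are all trivial, a choice of bases identifies $V \isom \OS^2$, $L \isom \OS$, $U \isom \OS^n$, $W \isom \OS^n$, reduces $f \in \Sym^n V \tensor L$ to a binary $n$-ic form with coefficients in $\OS(\mathcal{U})$, and identifies $A \in V \tensor U \tensor W$ with an element of $\OS^2 \tensor \OS^n \tensor \OS^n$. The orientation $\wn U \tensor \wn W \isom L$ relative to the chosen bases forces changes of basis of $U$ and $W$ to satisfy $\Det(g_1)\Det(g_2) = 1$, which is exactly the subgroup $G$. Hence the earlier unnumbered theorem of this section gives a bijection between $G$-classes of such $A$ with $\Det(A) = f$ and isomorphism classes of balanced free pairs $(M,N)$ for $f$; each such pair carries a preferred generator $\chi \in \wn M \tensor \wn N$ for which $\Det(A) = f$, and this $\chi$ produces the isomorphism $\wn M \tensor \wn N \isom L^*$ demanded in the definition of a balanced pair.

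Next I would verify naturality. The algebraic construction of $M$ and $N$ was set up over the universal ring $\Lambda$, so pulls back along any morphism of bases and is automatically functorial, and the $\GL(V)$-equivariance was established in Section~\ref{S:GLinv}. Combined with $G$-equivariance for bases of $U$ and $W$, this shows that on overlaps $\mathcal{U} \cap \mathcal{U}'$ the two locally constructed pairs $(M,N)$ are identified by the transition maps, and similarly for the inverse construction. The local bijections therefore glue to a global bijection in both directions.

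The main obstacle is the orientation bookkeeping. Globally, $\wn M \tensor \wn N$ is only locally free of rank one, and the balancing condition demands an isomorphism with $L^*$ for which the induced image of $\Det(A) \in \wn M^* \tensor \wn N^* \tensor \Sym^n V$ lands on $f$ exactly, not merely up to a unit. I need to check that the preferred local generators $\chi$ are compatible under the transition cocycles in $G$ and $\GL(V)$, so that they patch to a global isomorphism $\wn M \tensor \wn N \isom L^*$; uniqueness at the global level will then follow from the corresponding uniqueness over each trivializing open, together with the sheaf analog of Proposition~\ref{P:Istruc} which ensures that an $R_f$-module balancing map is recoverable from its composition with $I_f \ra V$.
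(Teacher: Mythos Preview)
Your approach matches the paper's: construct the $R_f$-module structure locally where $V,U,W$ are free, check that it is independent of the choice of bases (equivariance under $G$ and under $\GL(V)$), and verify that the two constructions are inverse on such opens. Two small points are worth tightening.

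First, the assertion that both sides ``form sheaves of sets'' is not correct: isomorphism classes of locally free sheaves with extra structure assemble into a stack, not a sheaf, so a bijection on a cover together with naturality does not by itself yield a global bijection. What does work, and what you in effect describe in your later paragraphs, is to define the maps $\psi$ and $\phi$ globally---by gluing the local $R_f$-actions on $U^*$ and $W^*$, and by composing the balancing map with $I_f\ra V$---and then check that $\psi\circ\phi$ and $\phi\circ\psi$ are identities locally. That is precisely the paper's argument.

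Second, your ``main obstacle'' disappears once you observe, as the paper does, that the construction hands you $M=U^*$ and $N=W^*$. Then $\wn M\tensor\wn N\isom\wn U^*\tensor\wn W^*\isom L^*$ is simply the inverse of the given orientation $\wn U\tensor\wn W\isom L$, and no patching of local generators $\chi$ is needed.
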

\begin{proof}
From $A$, we can construct $R_f$ modules from $U^*$ and $W^*$ by giving the $R_f$ action locally where
$U$ and $W$ are free and we can chose bases, and then seeing that it is invariant under change of basis by elements of $\GL_n\times\GL_n$ that preserve the orientation.  Similarly, we can construct the balancing map. 
Again, the construction of $A$ from a balanced pair of modules just combines the balancing map with $I_f\ra V$.
To see that these constructions are inverse, it suffices to check locally on $S$, where we can assume $V$, $U$, and $W$ are free.
\end{proof}

As when working over $\Z$, we can also get a version on the theorem for symmetric tensors.

\begin{definition}
 A \emph{self balanced module} for a non-zero-divisor 
 $f$ is an $R_f$-modules $M$, that is a locally free rank $n$ $\OS$-module and such that
$(\wn M)^{\tensor 2}$ is isomorphic to $L^*$, and a map of $R_f$-modules
$M \tensor_{R_f} N \ra I_f$, such that when the composition
$M \tensor_\OS N \ra M \tensor_{R_f} N \ra I_f \ra V$ is written as $A\in M^* \tesnor N^* \tensor V$
 we have $\Det(A)=fu$, where $u$ is a unit in $\OS$.
\end{definition}

\begin{theorem}
For every non-zero-divisor binary $n$-ic form $f\in \Sym^n V \tensor L$, there is a bijection
$$
\bij{isomorphism classes of self balanced modules $M$ for $f$}{isomorphism classes of $A\in V\tensor \Sym_2 U$, where $U$  is a locally free rank $n$ $\OS$-module with
an orientation isomorphism $(\wn U )^{\tensor 2}\isom L$, and such that $\Det(A)=f$}.
$$
\end{theorem}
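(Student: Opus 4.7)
The plan is to deduce this symmetric statement from Theorem~\ref{T:ARB} by showing that the bijection established there restricts to a bijection on the symmetric data on both sides. First I would observe that a self balanced module $M$ for $f$ gives rise to a balanced pair of modules in the sense of Theorem~\ref{T:ARB}, namely the pair $(M,M)$ equipped with its given balancing map $M\tensor_{R_f}M\ra I_f$. Conversely, since $\Sym_2 U$ sits inside $U\tensor U$ as the invariants under the swap, any $A\in V\tensor \Sym_2 U$ with $\Det(A)=f$ yields an element of $V\tensor U\tensor U$ with the same determinant, and Theorem~\ref{T:ARB} (applied with $W=U$) produces a balanced pair $(M,N)=(U^*,U^*)$ which, because of the symmetry of $A$, is naturally a self balanced module.

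Next I would check that these two constructions are mutually inverse. In one direction, starting from a self balanced module $M$ and applying the construction of Theorem~\ref{T:ARB} to $(M,M)$ with $U=W=M^*$, the resulting $A\in V\tensor M^*\tensor M^*$ must lie in $V\tensor \Sym_2 M^*$: the map $M\tensor_{R_f}M\ra I_f$ is, by hypothesis, defined on the tensor product without an ordering of factors, so when composed with $I_f\ra V$ it produces a symmetric element. The orientation $(\wn M^*)^{\tensor 2}\isom L$ induced by $(\wn M)^{\tensor 2}\isom L^*$ is exactly the one required. In the opposite direction, starting from symmetric $A$, the two $R_f$-module structures that Theorem~\ref{T:ARB} places on $U^*$ (one from the $U$-slot, one from the $W$-slot) coincide because of the symmetry of $A$, and the balancing map $U^*\tensor_\OS U^*\ra I_f$ factors through $U^*\tensor_{R_f} U^*$ (not just through the unordered tensor over $R_f$ of the two copies).

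The verifications are local on $S$, so we may assume $V$, $L$, and $U$ are free, in which case everything reduces to the statement in bases: a pair of matrices $A_1, A_2$ is symmetric if and only if, under the construction of Section~\ref{SS:algZ}, the two modules $M$ and $N$ together with their $R_f$-actions and the balancing map coincide. This in turn follows from the explicit formulas $\theta\cdot(-)=-A_2A_1^{-1}$ on $M$ (right action) and $\theta\cdot(-)=-A_1^{-1}A_2$ on $N$ (left action): transposing symmetric matrices interchanges the two formulas, giving a canonical identification of the two sides.

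The main point that needs care is the bookkeeping with orientations and isomorphism classes: one must check that two self balanced modules $M, M'$ become isomorphic as balanced pairs $(M,M)\isom(M',M')$ precisely when they are isomorphic as self balanced modules, which excludes the extra automorphism of balanced pairs that swaps the two factors. This is handled by the requirement that isomorphisms respect the orientation $(\wn M)^{\tensor 2}\isom L^*$, which is preserved under the swap and hence does not introduce new identifications; together with the local explicit description above, this completes the bijection.
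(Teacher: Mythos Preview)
Your approach—restricting the bijection of Theorem~\ref{T:ARB} to the symmetric locus on both sides—is exactly what the paper intends; the paper gives no proof beyond the remark that the symmetric case follows ``as when working over $\Z$,'' and over $\Z$ the argument is the one-line observation preceding Theorem~\ref{T:gensym}.

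There is, however, a genuine gap in the step where you assert that the tensor $A$ produced from a self balanced module is symmetric because ``the map $M\tensor_{R_f}M\ra I_f$ is, by hypothesis, defined on the tensor product without an ordering of factors.'' The paper's definition of a self balanced module only asks for an $R_f$-module map $\phi:M\tensor_{R_f}M\to I_f$ with $\Det(A)=f$; it does not require $\phi$ to be invariant under swapping the two factors. Without that requirement the statement is actually false. Over $\Z$ with $n=2$ and $f=x_1^2$, take $R_f=\Z[\theta]/\theta^2$, let $M=\Z^2$ with $\theta$ acting as $0$, and set $\phi(m_j\tensor m_k)=(A_1)_{jk}\,\theta$ for $A_1=\left(\begin{smallmatrix}1&1\\0&1\end{smallmatrix}\right)$. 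One checks directly that $\phi$ is an $R_f$-module map and that the resulting $A=(A_1,0)$ has $\Det(A)=x_1^2$, so $(M,\phi)$ is a self balanced module in the literal sense of the definition; yet $A_1$ is not symmetric, and since any isomorphism of self balanced modules replaces $A_1$ by $(g^{-1})^tA_1g^{-1}$, no isomorphic self balanced module has symmetric $A$.

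This gap is shared by the paper's own one-line justification and is really a deficiency in the definition rather than in your strategy. The intended definition almost certainly requires the balancing map to be symmetric. Once that is added, your argument goes through cleanly: the swap of the two rank-$n$ factors is an involution on both sides of Theorem~\ref{T:ARB}, compatible with the bijection, and its fixed points are precisely the self balanced modules (with symmetric $\phi$) on one side and $A\in V\tensor\Sym_2 U$ on the other. You should flag this correction explicitly rather than claim it as ``by hypothesis.''
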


\section{Geometric construction of modules from the universal tensor}\label{SS:geomZ}
Just as we have given both concrete and geometric constructions of $R_f$, $I_f$, and $J_f$, in this section we will give 
geometric constructions of the $R_f$-modules $M$ and $N$ that were constructed explicitly above for the universal tensor. 

\begin{notation}
Let $B$ be a ring.  
 When we have an a matrix $M \in B^m \tensor B^n$, we can multiply $M$ by vectors in two ways.  We can multiply
$M$ by a length $m$ vector on the left, and we can multiply $M$ by a length $n$ column vector on the right.
When we have an element $A\in B^\ell \tensor B^m \tensor B^n$, we can multiply it by vectors in three different ways, and 
we realize that the ``on the left'' and ``on the right'' descriptions do not generalize appropriately for three dimensional tensors.
We will need a new language.  An element $A\in B^\ell \tensor B^m \tensor B^n$ is comprised of entries
$a_{i j k}$, with $1\leq i\leq \ell,$ $1\leq j\leq m$, and $1\leq k \leq n$.  We say that $a_{ijk}$ is the entry in the
 $i$th \emph{aisle}, $j$th \emph{row}, and $k$th \emph{column}.  Note that aisle, row, and column denote two dimensional
 submatrices, i.e. codimension one slices of $A$.   For $A\in B^2 \tensor B^n \tensor B^n$, the $n$ by $n$ matrix
we called $A_i$ above is the $i$th aisle of $A$.  

If we have a sequence $x_1,\dots,x_\ell$, we can
form it into a vector and combine it with $A\in B^\ell \tensor B^m \tensor B^n$ to get the $m$ by $n$ matrix we call $A(x,\cdot,\cdot)$ with $j, k$ entry
$\sum_i a_{ijk} x_i$.  The dots indicate that we have not also multiplied by vectors in the other situations. 
For example, for $A\in B^2 \tensor B^n \tensor B^n$, the matrix $A(x,\cdot,\cdot)$ is what we have previously
referred to as 
$A_1 x_1 + A_2 x_2$.
 Similarly, if
we have a sequence $y_1,\dots, y_m$, we can form a $2 \times n$ array $A(\cdot,y,\cdot)$ with $i,k$ entry
$\sum_j a_{ijk} y_j$.  We could call this array a matrix, but it is more convenient to continue to refer
to its aisles and columns.  If
we have a sequence $z_1,\dots, z_n$, we can form a $2 \times m$ array $A(\cdot,\cdot,z)$ with $i,j$ entry
(in the $i$th aisle and $j$th row)
$\sum_\ell a_{ijk} z_k$.
In fact, we will always use a $x$ variable in the first place, $y$ in the second place, and a $z$ in the third place, and thus we will use the short hand $A(x)$ for $A(x,\cdot,\cdot)$ and $A(y)$ for $A(\cdot,y,\cdot)$.  
We may refer to the $j,k$ entry of $A(x)$ by $A(x)_{j,k}$ and the $i,k$ entry of $A(y)$ by
$A(y)_{i,k}$.
We can also multiply $A$ by more than one vector at a time.  For example, $A(x,y,\cdot)$ (denoted by $A(x,y)$ for short)
is a length $n$ vector with $\ell$th entry $\sum_i \sum_j a_{ijk} x_i y_j$.

Given a 2-dimensional array $A$ with entries in some ring, we can form the ideal
$\M(A)$ of the determinants of its maximal minors.  For example,
for $A\in B^2 \tensor B^n \tensor B^n$, we have that
$\M(A(x))=(\Det(A_1x_1+A_2x_2))$.  We have previously called the subscheme of $\P^1_B$  defined by this
ideal $T_{\Det(A_1x_1+A_2x_2)}$.  Now, in order to emphasize certain symmetries, we say
that $\M(A(x))$ defines a subscheme $T_{A(x)} \sub \P^1_B$.  Analogously, we have a subscheme
$T_{A(y)} \sub \P^{n-1}_B$ cut out by the determinants of $2\times 2$ minors of $A(y)$.
\end{notation}



The scheme $T_{A(y)}$ has line bundles
$\O_{T_{A(y)}}(k)$ pulled back from $\O(k)$ on $\P^{n-1}_B$.  Heuristically, we would like to say
that $R_{\Det(A)}$ is the $B$-algebra of global functions of $T_{A(y)}$ and
then we would have an $R_{\Det(A)}$-module $\Gamma(\O_{T_{A(y)}}(1))$.  However, we have already defined $R_{\Det(A)}$
as the $B$-algebra of global functions of $T_{A(x)}$ which is not always isomorphic to $\Gamma(\O_{T_{A(y)}})$.
Furthermore, $\Gamma(\O_{T_{A(y)}})$ is not always a free rank $n$ $B$-algebra, even when $\Det(A)$ is a non-zero-divisor.
We can, however, use this geometric construction of modules from sufficiently general
$2\times n\times n$ tensors, including from the
 the universal tensor of these dimensions.  
As in Section~\ref{SS:algZ}, we work over the ring $\Lambda=\Z[u_{ijk}]$ and 
with the universal tensor $\mathcal{C}$ in $\Lambda^2 \tensor_\Lambda \Lambda^n \tensor_\Lambda \Lambda^n$
with $i,j,k$ entry $u_{i,j,k}$. 
We have a binary $n$-ic form $c=\Det(\mathcal{C}(x))$ with coefficients in $\Lambda$.  
\begin{theorem}\label{T:samealg}
 The $\Lambda$-algebra $\Gamma(\O_{T_{\mathcal{C}(y)}})$ is isomorphic to $R_c$.
\end{theorem}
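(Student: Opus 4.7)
The plan is to construct a natural isomorphism of $\Lambda$-schemes $T_{\mathcal{C}(y)} \isom T_{\mathcal{C}(x)}$; taking global sections of the structure sheaf then yields the claimed isomorphism of $\Lambda$-algebras, since $R_c=\Gamma(\O_{T_{\mathcal{C}(x)}})$ by definition. The natural tool is the incidence correspondence
\[
Z \;=\; V(y^T \mathcal{C}(x)) \;\sub\; \P^1_\Lambda \times_\Lambda \P^{n-1}_\Lambda,
\]
cut out by the $n$ bilinear equations $(y^T\mathcal{C}(x))_k = x_1(y^T\mathcal{C}_1)_k + x_2(y^T\mathcal{C}_2)_k = 0$, which are sections of $\O(1,1)$.

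First I would observe that the two projections factor through the correct subschemes. For $p_1:Z\to\P^1_\Lambda$: if $y\ne 0$ satisfies $y^T\mathcal{C}(x) = 0$ then $\mathcal{C}(x)$ is singular, so $\Det(\mathcal{C}(x))=c(x)=0$ and $p_1$ lands in $T_{\mathcal{C}(x)}$. For $p_2:Z\to\P^{n-1}_\Lambda$: the existence of $x\ne 0$ with $y^T\mathcal{C}(x)=0$ forces the two rows $y^T\mathcal{C}_1$ and $y^T\mathcal{C}_2$ of $\mathcal{C}(y)$ to be proportional, so all $2\times 2$ minors of $\mathcal{C}(y)$ vanish and $y\in T_{\mathcal{C}(y)}$.

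The main step is to show that $p_1$ and $p_2$ are both isomorphisms. Both target schemes are finite flat of degree $n$ over $\Spec \Lambda$: $T_{\mathcal{C}(x)}$ because it is a degree-$n$ hypersurface in $\P^1_\Lambda$, and $T_{\mathcal{C}(y)}$ by pushing down the Eagon--Northcott resolution of the ideal of $2\times 2$ minors of the $2\times n$ matrix $\mathcal{C}(y)$ (whose surviving cohomology contributes $1+(n-1)=n$). The incidence $Z$ is cut out by $n$ sections of $\O(1,1)$ in a product of relative dimension $n$ over $\Lambda$; over the universal ring $\Lambda=\Z[u_{ijk}]$ these sections form a regular sequence, so $Z$ is also finite flat of degree $n$ over $\Lambda$. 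On a generic specialization where $c(x)$ has $n$ distinct geometric roots and $\mathcal{C}(x)$ has exactly $1$-dimensional left kernel at each root, both $p_1$ and $p_2$ are bijective on geometric points. A finite birational morphism between Cohen--Macaulay $\Lambda$-schemes that are finite flat of the same degree must be an isomorphism, so $p_1$ and $p_2$ are isomorphisms over all of $\Lambda$.

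The main obstacle is verifying that the $n$ bilinear equations cutting out $Z$ do form a regular sequence in the universal situation, equivalently that $Z$ is flat over $\Lambda$ of the expected codimension. I would check this by a codimension estimate on loci in $\Spec\Lambda$ where the dimension of $Z$ could jump, exploiting that $\Lambda$ is a polynomial ring and the equations are universal. Alternatively, one can bypass the global regular-sequence issue by working on affine charts: invert $c_0$ and use the action of $\theta = -\mathcal{C}_1^{-1}\mathcal{C}_2$ on $N_\mathcal{C}$ from Section~\ref{SS:algZ} to construct the isomorphism directly on the $x_2\ne 0$ chart (identifying both with $\Spec \Lambda_{c_0}[\theta]/c(\theta,1)$), then glue with the symmetric construction (inverting $c_n$) on the $x_1\ne 0$ chart to obtain the isomorphism over all of $\Lambda$.
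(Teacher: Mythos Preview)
Your main claim---that $T_{\mathcal{C}(x)}$ and $T_{\mathcal{C}(y)}$ are isomorphic as $\Lambda$-schemes---is false, and the paper says so explicitly. Over the origin of $\Spec\Lambda$ (where all $u_{ijk}=0$) the form $c$ vanishes identically and the matrix $\mathcal{C}(y)$ is the zero matrix, so the fiber of $T_{\mathcal{C}(x)}$ is all of $\P^1$ while the fiber of $T_{\mathcal{C}(y)}$ is all of $\P^{n-1}$. For $n>2$ these have different dimensions, so no $\Lambda$-scheme isomorphism exists. The same example shows that neither scheme, nor your incidence correspondence, is finite (let alone finite flat) over $\Lambda$: the fibers over the origin are positive-dimensional. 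Hence the step ``a finite birational morphism between Cohen--Macaulay $\Lambda$-schemes that are finite flat of the same degree must be an isomorphism'' has no traction; its hypotheses all fail. What \emph{is} true is that the pushforwards $\pi_*\O_{T_{\mathcal{C}(x)}}$ and $\pi_*\O_{T_{\mathcal{C}(y)}}$ are locally free of rank $n$, but that is a much weaker statement and does not make the underlying schemes finite flat.

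Your fallback---invert $c_0$ on one chart and $c_n$ on the other, then glue---also fails, because $D(c_0)\cup D(c_n)$ does not cover $\Spec\Lambda$ (the origin, for instance, lies in neither open). The paper's actual argument is more delicate and uses exactly the local freeness of the pushforwards. It removes a closed set $Z\subset\Spec\Lambda$ of codimension at least $2$ (cut out by the product of the ideal $(c_0,\dots,c_n)$ with an ideal coming from the $(n-1)$-minors of $\mathcal{C}(x)$), proves the scheme isomorphism $T_{\mathcal{C}(x)}'\isom T_{\mathcal{C}(y)}'$ only over the complement $S'$ via essentially your incidence correspondence, and then uses that each $\pi_*\O_T$ is locally free together with the codimension-$2$ condition to show that the restriction maps $\Gamma(\O_{T_{\mathcal{C}(x)}})\to\Gamma(\O_{T_{\mathcal{C}(x)}'})$ and $\Gamma(\O_{T_{\mathcal{C}(y)}})\to\Gamma(\O_{T_{\mathcal{C}(y)}'})$ are both isomorphisms. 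That Hartogs-type step is what bridges the gap between ``isomorphic over a dense open'' and ``global sections agree over all of $\Lambda$'', and it is the essential ingredient your proposal is missing.
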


\begin{proof}
Recall that $R_c=\Gamma(\O_{T_{\mathcal{C}(x)}})$ by definition.
So we need to show that we have an isomorphism of $\Lambda$-algebras 
$\Gamma(\O_{T_{\mathcal{C}(y)}})\isom \Gamma(\O_{T_{\mathcal{C}(x)}})$.  We might expect that this would follow because we had an isomorphism
of $\Lambda$-schemes $T_{\mathcal{C}(y)}\isom T_{\mathcal{C}(x)}$, but that is not the case. However, restricted to a large open subscheme of $\Spec \Lambda$, we do
get such an isomorphism.  Let $S'$ be the open subscheme of $S=\Spec \Lambda$ that is the complement of the closed subscheme 
$Z$ (defined below).  Let $T_{\mathcal{C}(y)}'=T_{\mathcal{C}(y)} \tensor_S S',$ which is an open subscheme of $T_{\mathcal{C}(y)}$.
Similarly, let $T_{\mathcal{C}(x)}'=T_{\mathcal{C}(x)} \tensor_S S',$ which is an open subscheme of $T_{\mathcal{C}(x)}$.
We will show in Lemma~\ref{L:schemeisom} that we have an isomorphism of $S'$-schemes $T_{\mathcal{C}(y)}'\isom T_{\mathcal{C}(x)}'$.

The subscheme $Z$ of $S$ will correspond to tensors that are very degenerate.  Thus we can think of 
the tensor $\mathcal{C}\tensor_S S'$ over $S'$ as the universal ``not too degenerate'' tensor.
The  $(n-1)$-minors of $\mathcal{C}(x)$ form a matrix $W(\{x_1^{n-1},x_1^{n-2}x_2,\dots,x_2^{n-1}\},\cdot,\cdot)$.
The $i,j,k$ entry of $W\in \Lambda^n \tesnor \Lambda^n \tensor \Lambda^n$ is $(-1)^{j+k}$ times the $x_1^{n-i}x_2^{i-1}$ coefficient of the determinant of the
submatrix of $\mathcal{C}(x)$ obtained by deleting the $j$th row and $k$th column.  
We can form $\Det(W(y))$, a degree $n$ polynomial in the $y_i$, and form the ideal $\mathfrak{w}$ of $\Lambda$ of its coefficients, with
$d_i$ the coefficient of $y_i^n$.  
If we do the analogous construction, starting with $\mathcal{C}(y)$, we see that the next-to-maximal minors of $\mathcal{C}(y)$ are the entries themselves.
Then we can form $\Det(\mathcal{C}(x))=c$, and form the ideal $(c_0,\dots,c_n)$ of its coefficients.
Let $Z$ be the subscheme of $S=\Spec \Lambda$ defined by the ideal $(c_0,\dots,c_n)\mathfrak{w}$.

We will prove the theorem with the following lemmas.

\begin{lemma}
 The codimension of $Z$ in $S$ is at least 2.
\end{lemma}
\begin{proof}
Suppose for the sake of contradiction that $Z$ is codimension 1.  Then either
the subscheme of $S$ defined by $(c_0,\dots,c_n)$ or the subscheme defined by $\mathfrak{w}$
must be codimension 1 and thus given by a principal ideal.  However, we note that
$c_0$ and $c_n$ are expressions is disjoint sets of the $u_{ijk}$.  In fact, $c_0$ 
only involves the $u_{1jk}$ and $c_n$ the $u_{2jk}$.  Thus $c_0$ and $c_n$ have no nontrivial divisor in the UFD $\Lambda$,
and the subscheme cut out by $(c_0,\dots,c_n)$ cannot be codimension 1.  Similarly,
$d_j$ does not involve any $u_{** j}$. Thus a common divisor of $d_1,\dots, d_n$ must be trivial, and
so the subscheme cut out by $(d_1,\dots,d_n)$ cannot be codimension 1.
We conclude the subscheme cut out by $\mathfrak{w}$, which is contained in the subscheme cut out by $(d_1,\dots,d_n)$, cannot be codimension 1.
\end{proof}

\begin{lemma}\label{L:usecodim}
 We have that the restriction map $\Gamma(\O_S) \ra \Gamma(\O_{S'})$ is an isomorphism and thus
$\Gamma(\O_{S'})$ is naturally isomorphic to $\Lambda$.
\end{lemma}
\begin{proof}
This follows because $Z$ is codimension at least 2 (e.g. by \cite[Chapter 4, Theorem 1.14]{Liu}).
\end{proof}

From this lemma we conclude that $\Gamma(\O_{T_{\mathcal{C}(x)}'})$ and $\Gamma(\O_{T_{\mathcal{C}(y)}'})$ are $\Lambda$-algebras.

\begin{lemma}\label{L:isom1}
 The restriction map $\Gamma(\O_{T_{\mathcal{C}(x)}})\ra\Gamma(\O_{T_{\mathcal{C}(x)}'})$ is an isomorphism of $\Lambda$-algebras.
\end{lemma}
\begin{proof}
Let $\pi: T_{\mathcal{C}(x)} \ra S$.
We will see in Theorem~\ref{T:modstruc} that the $\OS$-module $\pi_*(\O_{T_{\mathcal{C}(x)}})$ is locally free on $S$. 
We will show that $\pi_*(\O_{T_{\mathcal{C}(x)}})$ is isomorphic to the pushforward 
of $\pi_*(\O_{T_{\mathcal{C}(x)}'})$ to $S$, and then taking global sections will prove the lemma.  
We cover $S$ with opens $\mathcal{U}_i$ that trivialize $\pi_*(\O_{T_{\mathcal{C}(x)}})$.  
 Since $S$ is irreducible,
$\mathcal{U}_i$ is the same dimension as $S$.  
On each $\mathcal{U}_i$,
we then have that $\mathcal{U}_i\cap Z$ is at least codimension 2 in $\mathcal{U}_i$. 
Thus, on $\mathcal{U}_i$, the sheaf $\pi_*(\O_{T_{\mathcal{C}(x)}})$ is isomorphic to $\O_S^{\oplus n}$,
for which the restriction map to $S'$ is an isomorphism by Lemma~\ref{L:usecodim}.
This means that restricted to $\mathcal{U}_i$, we have that $\pi_*(\O_{T_{\mathcal{C}(x)}})$ is isomorphic to the pushforward 
of $\pi_*(\O_{T_{\mathcal{C}(x)}'})$ to $S$.
\end{proof}

\begin{lemma}
 The restriction map $\Gamma(\O_{T_{\mathcal{C}(y)}})\ra\Gamma(\O_{T_{\mathcal{C}(y)}'})$ is an isomorphism of $\Lambda$-algebras.
\end{lemma}
\begin{proof}
The sheaf $\pi_*(\O_{T_{\mathcal{C}(y)}})$ is locally free on $S$ by Theorem~\ref{T:modstruc}.
We then use the same argument as in Lemma~\ref{L:isom1}.
\end{proof}
 
\begin{lemma}\label{L:schemeisom}
 We have an isomorphism of $S'$-schemes $T_{\mathcal{C}(y)}'\isom T_{\mathcal{C}(x)}'$.
\end{lemma}
\begin{proof}
First we will give the main idea of the proof.
The idea is that we can define a correspondence between points of $T_{\mathcal{C}(y)}'$ and $T_{\mathcal{C}(x)}'$
by $\mathcal{C}(x,y)=0$.  For points in $T_{\mathcal{C}(x)}'$, we have $\Det(\mathcal{C}(x))=0$, and thus there should be some
values of $y_j$ such that $\mathcal{C}(x,y)=0$.  For these $y_j$, we have that $\mathcal{C}(y)$ sends a non-trivial vector $x$ to 0,
and thus is rank 1.  Conversely, for points in $T_{\mathcal{C}(y)}'$, we have that $\mathcal{C}(y)$ is rank 1, and thus should
send a non-trivial vector $x$ to zero, and $\mathcal{C}(x,y)=0$ implies that $\mathcal{C}(x)$ has determinant zero.
The first difficultly in making this idea rigorous is that the correspondence is only a bijection
when $\mathcal{C}$ is sufficiently non-degenerate, which is why we have had to restrict to the base $S'$.  
Over $S'$, we could prove that the tensors are sufficiently non-degenerate to give a
 bijection of field valued points of $T_{\mathcal{C}(y)}'$ and $T_{\mathcal{C}(x)}'$.
Below, we must work a bit harder to prove an isomorphism of schemes.

First we give a map $T_{\mathcal{C}(y)}'\ra T_{\mathcal{C}(x)}'$.  We will give maps from open sets of $T_{\mathcal{C}(y)}'$ to
$\P^1_{S'}$.  We will then show that the open sets cover $T_{\mathcal{C}(y)}'$.  
We will then show that these maps agree on overlaps, and finally we will show that the image lands in $T_{\mathcal{C}(x)}'$.
Given an $1\leq i\leq n$, we map $T_{\mathcal{C}(y)}'$ to $\P^1_{S'}$ via
$x_1=-\sum_{j} u_{2jk}y_j$ and $x_2=\sum_{j} u_{1jk}y_j$, which we can do on the open set
$E_k \subset T_{\mathcal{C}(y)}'$  defined as the complement of 
the ideal $(\sum_{j} u_{1jk}y_j, \sum_{j} u_{2jk}y_j )$.  
Note that $\sum_{j} u_{ijk}y_j$ is the $i,k$ entry of $\mathcal{C}(y)$,
or of the next-to-maximal minor of $\mathcal{C}(y)$.
  Suppose there was a point of $T_{\mathcal{C}(y)}'$ not
in any $E_k$.  If we write $y$ for the vector of the $y_j$'s, then at this point we have
$\mathcal{C}(y)=0$, i.e. $\mathcal{C}_1(y)=0$ and $\mathcal{C}_2(y)=0$, and thus for formal $x_i$, we have 
$\mathcal{C}(x,y)=0$, and thus 
$\Det(\mathcal{C}(x))=0$ at this point, which contradicts
our choice of $S'$ to be in the complement of $(c_0,\dots,c_n)$.

The fact that these maps agree on the intersection of $E_k$ and $E_\ell$ is exactly given by the fact 
that on $T_{\mathcal{C}(y)}'$ the $2\times 2 $ minor of $\mathcal{C}(y)$ including
rows $k$ and $\ell$ is 0.  To see that the image of our map lands in $T_{\mathcal{C}(x)}'$, we check on open $P_i$
of $T_{\mathcal{C}(y)}'$, where $y_i$ is non-zero.  
We have that $\mathcal{C}(x)$ on $E_\ell$ has $j,k$ entry $-u_{1,j,k}\sum_a a_{2 a \ell} y_a +
u_{2,j,k}\sum_a a_{1 a \ell} y_a  $, and thus $\mathcal{C}(x,y)$ has $k$th entry
\begin{align*}
 \sum_{j} y_j \left(-u_{1,j,k}\sum_a u_{2 a \ell} y_a +
u_{2,j,k}\sum_a u_{1 a \ell} y_a \right)&=
\sum_{j} -u_{1,j,k} y_j \sum_a u_{2 a \ell} y_a +
\sum_j u_{2,j,k}y_j \sum_a u_{1 a \ell} y_a \\ &=-\mathcal{C}(y)_{1,k} \mathcal{C}(y)_{2,\ell}+\mathcal{C}(y)_{2,k} \mathcal{C}(y)_{1,\ell},
\end{align*}
which is zero by the definition of $T_{\mathcal{C}(y)}'$.
On $P_i$ we form the column vector $y/y_i$ of regular functions,
with $j$th entry $y_j/y_i$, and we see that $\mathcal{C}(x,y/y_i)=0$.  Thus we can write
the $i$th row of $\mathcal{C}(x)$ as a linear combination of the other rows, and conclude that
$\Det(\mathcal{C}(x))=0$.

Next, we will give a map $T_{\mathcal{C}(x)}'\ra T_{\mathcal{C}(y)}'$, which should be seen in analogy to the map $T_{\mathcal{C}(y)}'\ra T_{\mathcal{C}(x)}'$.
We will give maps from open sets of $T_{\mathcal{C}(x)}'$ to
$\P^{n-1}_{S'}$, and show that the open sets cover $T_{\mathcal{C}(y)}'$.  
We will then show that these maps agree on overlaps, and finally we will show that the image lands in $T_{\mathcal{C}(x)}'$.
Given an $1\leq i\leq n$, we  map $T_{\mathcal{C}(x)}'$ to $\P^{n-1}_{S'}$ by
letting $y_j$ equal the $j,k$ minor of $\mathcal{C}(x)$, that is $y_j$ equals
$(-1)^{j+k}$ times the determinant of the submatrix of $\mathcal{C}(x)$ obtained by deleting the $j$th row and $k$th column.
We have defined the $y_j$'s to be a column of minors of $\mathcal{C}(x)$.
This is a well-defined map to $\P^{n-1}$ on the open set
$F_k \subset T_{\mathcal{C}(x)}'$ defined as the complement of 
the ideal of $(n-1)$-minors of $\mathcal{C}(x)$ for the $k$th column. 
  Suppose there was a point of $T_{\mathcal{C}(x)}'$ not
in any $F_k$, then at this point we have
all minors of $\mathcal{C}(x)$ are 0.  This means that $W(\{x_1^{n-1},x_1^{n-2}x_2,\dots,x_2^{n-1}\},\cdot,\cdot)=0$.  Thus
for formal $y$, we have $W(\cdot,y,\cdot)$ has a non-trivial kernel and thus $\Det(W(y))=0$, which contradicts
our choice of $S'$ to be in the complement of $\mathfrak{w}$.

The fact that these maps agree on the intersection of $F_k$ and $F_\ell$ is exactly given by the fact 
that the
$2 \times 2$ minors of the classical adjoint matrix are divisible by the determinant of the original matrix.
  To see that the image of our map lands in $T_{\mathcal{C}(y)}'$, we check on opens $P_i$
of $T_{\mathcal{C}(x)}'$, where $x_i$ is non-zero.  
On $P_i$ we form the column vector $x/x_i$ of regular functions,
with $j$th entry $x_j/x_i$.  
Computing $\mathcal{C}(x,y)_{\ell}$ with the $y_i$'s we have defined on $F_k$ is the same as computing the determinant of $\mathcal{C}(x)$ with the
$k$th column replaced by the $\ell$th column.  Whether or not $k=\ell$, since $\Det(\mathcal{C}(x))=0$, we obtain $\mathcal{C}(x,y)_{\ell}=0$.
Thus, $\mathcal{C}(x/x_i,y)=0$ and we can write
the $i$th aisle of $\mathcal{C}(y)$ as multiple of the other aisle.  We conclude that
$\mathcal{C}(y)$ has all $2$ by $2$ minors $0$.

Now we need to check that the maps we have just given are inverses one one another.
We first check on the inverse image of $E_k$ in $F_\ell$.  Here we start with $x_i$, we define new $y_j$,
and then from the $y_j$ we define new $x_i'$.  We will compute
$-x_1'x_2+x_2'x_1$. 
Since we have 
$x_1'=-\sum_{j} u_{2jk}y_j$ and $x_2'=\sum_{j} u_{1jk}y_j$, we have that
$$
-x_1'x_2+x_2'x_1=\sum_{j} (u_{1jk}x_1+u_{2jk}x_2)y_j.
$$
We note that $u_{1jk}x_1+u_{2jk}x_2=\mathcal{C}(x)_{j,k}$, and that
$y_j$ is defined to be the $j,\ell$ minor of $\mathcal{C}(x)$.
Thus $-x_1'x_2+x_2'x_1$ is the determinant of the matrix obtained from $\mathcal{C}(x)$
by replacing the $\ell$th column by the $k$th column, and is zero in any case on $T_{\mathcal{C}(x)}$.
This shows that our maps compose to the identity on the inverse image of $E_k$ in $F_\ell$ for all $k$ and $\ell$,
and thus on $T_{\mathcal{C}(x)}$.

We now check on the inverse image of $F_k$ in $E_\ell$.
Here we start with $y_j$, we define new $x_i$,
and then from the $x_i$ we define new $y_j'$.  
At first, we will use formal $y_j$ (i.e. not assuming the relation in $T_{\mathcal{C}(y)}$).  Then
we will compute $y_j'y_m-y_m'y_j$ is in the ideal of relations $\mathcal{M}(\mathcal{C}(y))$ that cut out $T_{\mathcal{C}(y)}$.
We can form an $n\times n$ matrix $M$ with $a,b$ entry $(-1)^a\mathcal{C}(x)_{a,b}y_a$ if $a=j,m$ and $\mathcal{C}(x)_{a,b}$ otherwise.
Note that $(-1)^m y_j' y_m$ is the $j,k$ minor of $M$ and 
$(-1)^j y_j' y_m$ is the $m,k$ minor of $M$.
For any matrix $N$ the difference of $(-1)^m$ times the $j,k$ minor of $N$ and $(-1)^j$ times the $m,k$ minor of $N$
is in the ideal generated by maximal minors of $\bar{N}$, which is obtained from $N$
by deleting rows $j$ and $m$ and adding a row that is the $j$th row of $N$ plus $(-1)^{j+m}$ times the $m$th row of $N$.
The maximal minors of $\bar{M}$ are not changed if we add multiples of the original
(non-deleted) rows of $M$ to the new row of $\bar{M}$.  We add $(-1)^{j}y_a$ times the original $a$th row
of $M$ to the new row of $\bar{M}$ to obtain $\bar{M}'$.  The maximal minors of $\bar{M}'$ certainly lie
in the ideal generated by the elements of its ``new'' row, and we claim these elements are in $\mathcal{M}(\mathcal{C}(y))$.  
In the $b$th column, the element in the new row of $\bar{M}'$ is
is 
$$\sum_{i,c} u_{icb}x_iy_c(-1)^j=\sum_{i,c,a} (-1)^{i+j} u_{icb} u_{(3-i)a\ell} y_ay_c=\sum_{i}(-1)^{i+j} \mathcal{C}(y)_{i,b} \mathcal{C}(y)_{3-i,\ell} ,$$
which is the $b,\ell$ minor of $\mathcal{C}(y)$ and thus in $\mathcal{M}(\mathcal{C}(y))$.
This shows that our maps compose to the identity on $T_{\mathcal{C}(x)}$.
\end{proof}

Thus it follows that we have an isomorphism of $\Lambda$-algebras  $R_c=\Gamma(\O_{T_{\mathcal{C}(x)}})\isom \Gamma(\O_{T_{\mathcal{C}(y)}})$.
\end{proof}

From Theorem~\ref{T:samealg}, we have an $R_c$-module structure on $\Gamma(\O_{T_{\mathcal{C}(y)}}(1))$.  
We now see that it is related to the module $M_\mathcal{C}$ we constructed in Section~\ref{S:Raction}
from the universal tensor. 

\begin{theorem}\label{T:modag}
 We have an isomorphism of $R_c$-modules $$\Gamma(\O_{T_{\mathcal{C}(y)}}(1))\isom \Hom_\Lambda(M_\mathcal{C},\Lambda),$$ where $M_\mathcal{C}$ is 
as in the construction $\psi$ of two $R_c$ modules $M_\mathcal{C}$ and $N_\mathcal{C}$ given in Section~\ref{S:Raction}.
\end{theorem}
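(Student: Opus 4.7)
The plan is to follow the general strategy of the proof of Theorem~\ref{T:samealg}. First I observe that both $\Gamma(\O_{T_{\mathcal{C}(y)}}(1))$ and $\Hom_\Lambda(M_\mathcal{C}, \Lambda)$ are locally free $\Lambda$-modules of rank $n$: for $\Hom_\Lambda(M_\mathcal{C}, \Lambda)$ this is immediate from $M_\mathcal{C} \cong \Lambda^n$, and for $\Gamma(\O_{T_{\mathcal{C}(y)}}(1))$ it follows from the local freeness of $\pi_*\O_{T_{\mathcal{C}(y)}}(1)$, the same kind of fact cited inside the proof of Lemma~\ref{L:isom1}. The codimension-$2$ argument of Lemma~\ref{L:isom1}, combined with Lemma~\ref{L:usecodim}, then reduces the problem to constructing the isomorphism over the open subscheme $S' \subset S$, where we have access to the scheme isomorphism $T_{\mathcal{C}(y)}' \isom T_{\mathcal{C}(x)}'$ from Lemma~\ref{L:schemeisom}.

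I would define the comparison map explicitly. Let $m_1, \ldots, m_n$ denote the standard row-vector basis of $M_\mathcal{C} = \Lambda^n$ and let $\phi_1, \ldots, \phi_n \in \Hom_\Lambda(M_\mathcal{C}, \Lambda)$ be the dual basis, so $\phi_j(m_i) = \delta_{ij}$. Send the linear form $y_j \in \Gamma(\O_{T_{\mathcal{C}(y)}}(1))$ (restricted from the standard coordinate on $\P^{n-1}_\Lambda$) to $\phi_j$ and extend $\Lambda$-linearly. Over $S'$, using the scheme iso and the computation (from the proof of Lemma~\ref{L:schemeisom}) that on $F_k$ the iso sends $y_j$ to the $(j,k)$-minor of $\mathcal{C}(x)$, one verifies that $y_1, \ldots, y_n$ form a $\Lambda'$-basis of the rank-$n$ free $\Lambda'$-module $\Gamma(\O_{T_{\mathcal{C}(y)'}}(1))$, so the map is well-defined.

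The main algebraic work is verifying $R_c$-equivariance, which (after inverting $c_0$) reduces to comparing the action of $\theta$. On the dual side, $(\theta\phi_j)(m_i) = \phi_j(m_i \theta) = \phi_j(-m_i \mathcal{C}_2 \mathcal{C}_1^{-1}) = -(\mathcal{C}_2 \mathcal{C}_1^{-1})_{i,j}$, hence $\theta\phi_j = -\sum_k (\mathcal{C}_2 \mathcal{C}_1^{-1})_{k,j}\phi_k$. On the section side, the vanishing of the $2 \times 2$ minors of $\mathcal{C}(y)$ on $T_{\mathcal{C}(y)}'$ translates (via the scheme iso) into the relation $y^T \mathcal{C}(x) = 0$; writing $\theta = x_1/x_2$ gives $y^T(\mathcal{C}_1 \theta + \mathcal{C}_2) = 0$, so $\theta y^T = -y^T \mathcal{C}_2 \mathcal{C}_1^{-1}$, and the $j$-th entry reads $\theta y_j = -\sum_k (\mathcal{C}_2 \mathcal{C}_1^{-1})_{k,j} y_k$, matching the dual-side formula. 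Equivariance under arbitrary $r \in R_c$ follows because each $\zeta_k$ is a polynomial in $\theta$ with $\Lambda$-coefficients and our map is $\Lambda$-linear.

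Finally, the map is an isomorphism: over $S'$ it sends a $\Lambda'$-basis to a $\Lambda'$-basis of rank-$n$ free modules, and the codimension-$2$ argument extends this to an isomorphism over $S$. The main obstacle I anticipate is the precise justification that the $y_j$ form a $\Lambda$-basis (not merely a generating set) of $\Gamma(\O_{T_{\mathcal{C}(y)}}(1))$; this requires a careful analysis of the pushforward, using the scheme isomorphism to identify $\pi_*\O_{T_{\mathcal{C}(y)}}(1)$ with a twist of $\pi_*\O_{T_{\mathcal{C}(x)}}(n-1)$ (since $\O_{T_{\mathcal{C}(y)}}(1)$ pulls back on each $F_k$ to a degree-$(n-1)$ line bundle via the cofactor formula) and showing that the $n$ cofactor columns span the pushforward. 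A secondary bookkeeping issue is keeping the sign and transpose conventions straight in the $\theta$-calculation so that the target is genuinely $\Hom_\Lambda(M_\mathcal{C}, \Lambda)$ rather than $N_\mathcal{C}$ (which differs from it by multiplication by $\mathcal{C}_1$ over $\Lambda'$).
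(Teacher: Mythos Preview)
Your argument is correct, but it takes a longer route than the paper's, and the obstacle you flag at the end is already handled by a result you cite earlier.

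The paper's proof is much shorter because it avoids the detour through $S'$ and the scheme isomorphism of Lemma~\ref{L:schemeisom} entirely. It simply invokes Theorem~\ref{T:modstruc} to conclude that $\Gamma(\O_{T_{\mathcal{C}(y)}}(1))$ is a free $\Lambda$-module with basis $y_1,\dots,y_n$; that theorem gives an $\OS$-module isomorphism $\I_{\wedge^2_U p}\isom U$, and in the universal case the right-hand side is $\Lambda^n$ with its standard basis corresponding to the linear forms $y_j$. So the ``main obstacle'' you anticipate---proving the $y_j$ are a basis rather than just generators---is exactly the content of Theorem~\ref{T:modstruc}, and you do not need to go through cofactor columns or the identification with a twist of $\pi_*\O_{T_{\mathcal{C}(x)}}(n-1)$.

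Once both sides are free $\Lambda$-modules with specified bases, the paper reduces to the \emph{generic point} of $\Spec\Lambda$ rather than to the open set $S'$: the action of each $\zeta_k$ on the $y_j$ is by some matrix with entries in $\Lambda$, so it suffices to compute that matrix over the fraction field of $\Lambda$. There one uses the relation $\mathcal{C}(x,y)=0$ directly (which holds on $T_{\mathcal{C}(y)}'$, hence at the generic point via Lemma~\ref{L:schemeisom}) to read off that $x_1/x_2$ acts on the row vector $y$ by $-\mathcal{C}_2\mathcal{C}_1^{-1}$ on the right, which is exactly your computation. Your reduction to $S'$ via the codimension-$2$ argument of Lemma~\ref{L:isom1} is valid, but unnecessary once you have the basis: the generic-point reduction is cleaner and sidesteps any need to re-establish an isomorphism of modules over $S'$.
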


\begin{proof}
We will see in Theorem~\ref{T:modstruc} that $\Gamma(\O_{T_{\mathcal{C}(y)}}(1))$ is a free $\Lambda$-module with basis $y_1,\dots,y_n$.
Thus, it just suffices to check that the $\zeta_i$ acts on the $y_j$
in a way corresponding to their action on $M_\mathcal{C}$.
We know that $\zeta_i$ acts on the $y_j$ by a matrix of elements of $\Lambda$, and thus it suffices to determine
this action over the generic point of $\Spec \Lambda$, i.e. the fraction field of $\Lambda$.
  We have that $\mathcal{C}(x,y)=0$ and thus $y\mathcal{C}_1x_1
+y\mathcal{C}_2x_2=0$, where $y$ is a row vector of the $y_i$.  Thus, where $x_2$ is invertible, $\frac{x_1}{x_2}$
acts like $-\mathcal{C}_2\mathcal{C}_1^{-1}$ on the right on the row vector $y$, and where $x_1$ is invertible, $\frac{x_2}{x_1}$
acts like $-\mathcal{C}_1\mathcal{C}_2^{-1}$ on the right on the row vector $y$.  
Thus $\frac{x_1}{x_2}$ acts like $-\mathcal{C}_2\mathcal{C}_1^{-1}$ on the left on elements of $\Gamma(\O_{T_{\mathcal{C}(y)}}(1))$ written
as row vectors whose entries are the coefficients of the $y_i$ in the element.
We have that $\theta$ acts in elements of $M_\mathcal{C}$ by $(-\mathcal{C}_2\mathcal{C}_1^{-1})^t$ on the left.
Since in the correspondence between the algebraic
and geometric construction on $R_c$ we have that $\theta$ corresponds to $\frac{x_1}{x_2}$, we see that
the $\zeta_i$ act on $\Gamma(\O_{T_{\mathcal{C}(y)}}(1))$ as they act on $\Hom_\Lambda(M_\mathcal{C},\Lambda)$.
 \end{proof}

We can of course get a completely analogous geometric construction of $N_\mathcal{C}$ as \\
$\Hom_\Lambda(\Gamma(\O_{T_{\mathcal{C}(z)}}(1)),\Lambda)$.

\section{Geometric construction over an arbitrary base scheme}\label{S:ArbBase}
\begin{notation}
 Given a scheme $S$ and a locally free $\OS$-module $U$, we let $U^*$ denote the $\OS$-module $\sHom_{\OS} (U,\OS)$,
even if $U$ is also a module for another sheaf of algebras.  Let $\P(U)=\Proj \Sym^* U$.
\end{notation}

Now we replace $\Spec \Lambda$ by an arbitrary scheme $S$, and we consider $V,U,W$, locally free $\OS$-modules of ranks 2, $n$, and $n$,
respectively.  Let $p\in V\tesnor U \tesnor W$ denote  a global section of
$V\tesnor U \tesnor W$.  Let
$f=\Det(p)\in \Sym^n V \tensor \wn U \tensor \wn W$. 
In Section~\ref{S:arbbase} we constructed a balanced pair $M,N$ of modules for $f$ from $p$.
In this section, we will give a geometric construction of those modules, or rather
a geometric construction of $M^*$ and $N^*$ as we have done in the case of the universal form in Section~\ref{SS:geomZ}.
This construction of the modules $M^*$ and $N^*$ from $p\in V\tesnor U \tesnor W$ will
work for all $p$ and be functorial in $S$, i.e. will commute with base change in $S$.

The idea is to replace the 
subschemes of $\P(V)$, $\P(U)$, and $\P(W)$ cut out by the maximal minors of our tensor
(called $T_p(V),T_p(U),T_p(W)$, respectively, in the Introduction)
 with complexes of sheaves.  
We will then replace $\pi_*$ with the hypercohomology functors $H^0 R \pi_*$.
This has already been done
 in the construction of $R_f$ and the module $I_f$ in \cite[Section 3]{Bf}.  We face some additional challenges 
in this paper for $T_p(U)$ and $T_p(W)$ because the complexes involved are more complicated.   
One can also interpret this work as a construction of dg-schemes 
given by resolutions of the maximal minors
 instead
of just a construction of schemes.

\subsection{Arbitrary triple tensors}
We now give a more general construction before specifying to the situation of interest in this paper.
Let $S$ be an arbitrary scheme, and let $p\in V \tesnor U\tesnor W$, where $V,U,W$ are locally free $\OS$-modules of ranks
$r_V$, $r_U$, and $r_W$ respectively.  Let $r=r_V$ and assume $r_W\geq r$.  We can view $p$ as a map
$W^* \ra V\tesnor U$, and take $r$-minors of this map, with coefficients in $U$, to get
$\wedge^r_U p : \wedge^{r} W^* \ra \wedge^{r}V\tesnor \Sym^{r}U$ or equivalently
$\wedge^r_U p : \wedge^{r} W^* \tensor \wedge^{r}V^* \ra\Sym^{r}U$.

Let $\pi: \P(U) \ra S$.  Let $\O(k)$ be the usual sheaf of $\P(U)$.
Then since $\pi_* \O(r) = \Sym^{r}U$, by the adjointness of $\pi_*$ and $\pi^*$ we get a map
$$\wedge^r_U p : \pi^*\left(\wedge^{r} W^* \tensor \wedge^{r}V^*\right) \ra \O(r)$$
or equivalently, for any $k$, we get
$$\wedge^r_U p : \pi^*\left(\wedge^{r} W^* \tensor \wedge^{r}V^*\right)\tensor_{\OS} \O(k) \ra \O(r+k).$$
It is an abuse of notation to call all these maps $\wedge^r_U p$, but it better than the alternative. Locally on $S$, where $U$, $V$, and $W$ are free, the map
$\wedge^r_U p : \pi^*\left(\wedge^{r} W^* \tensor \wedge^{r}V^*\right) \ra \O(r)$ 
has image spanned by the $\binom{r_W}{r}$ $r$-by-$r$ minors of the matrix of the map
$W^* \ra V\tesnor U$, an $r_V$ by $r_W$ matrix with entries in $U$.  
The idea of our construction is to replace the sheaf $\O/\im (\wedge^r_U p)$ 
of regular functions of the subscheme of $\P(U)$ cut out by those $r$-by-$r$ minors
with complex that is generically a locally free resolution of the $\O/\im (\wedge^r_U p)$.

From the Eagon-Northcott complex, which resolves $R$ modulo the $\ell$ by $\ell$ minors of a generic matrix
(see \cite{CA}),
we can construct a
 complex
$\mathcal{C}(k)$ with $\mathcal{C}^{-1}(k) \ra \mathcal{C}^0(k)$ given by
$$\wedge^r_U p : \pi^*\left(\wedge^{r} W^* \tensor \wedge^{r}V^*\right)\tensor_{\OS} \O(k) \ra \O(r+k),$$
and with $\mathcal{C}^i(k)=0$ for $i>0$ and $i\leq -(r_W-r)-2$.  For $-(r_W-r)-1\leq i\leq -2$, we have
$$\mathcal{C}^i(k)= \pi^*(\wedge^r V^* \tesnor K_{-i}(r,W^*,V)) \tensor_\OS \O(i+1+k),$$
where $K_{-i}(r,W^*,V)$ is the locally free $\OS$-module built from $V$ and $W$ 
that is the $i$th term in the Eagon-Northcott complex for a map $\alpha: W^* \ra V$
and $d_i$ is canonically constructed from $p$ (and explained in the next paragraph).  Note that $K_{-i}(r,W^*,V)$ only depends on $V$ and $W$ and does not depend
on $\alpha$.

We now show how to construct the $d_i$.
From the construction of the Eagon-Northcott complex, 
there is a map 
$$
\sHom_{\O_Y}(W^*,V) \ra \sHom_{\O_Y}(\wedge^r V^* \tesnor K_{-i}(r,W^*,V),\wedge^r V^* \tesnor K_{-i+1}(r,W^*,V))
$$ that sends $\alpha \mapsto d_i$, where $d_i$ is the map in
the Eagon-Northcott complex for $\alpha$.  
We can extend that linear map to
$$
\sHom_{\O_Y}(W^*,V) \tensor U \ra \sHom_{\O_Y}(\wedge^r V^* \tesnor K_{-i}(r,W^*,V),\wedge^r V^* \tesnor K_{-i+1}(r,W^*,V)) \tesnor_{\O_Y} U
$$
to get the maps when there are coefficients.  Let $H_i$ be the $\O_Y$-module $\wedge^r V^* \tesnor K_{-i}(r,W^*,V)$.
We obtain $d_i : H_i \ra H_{i+1}\tensor U$, or equivalently
$d_i : H_i \tensor H_{i+1}^* \ra U$.  Using adjointness of $\pi_*$ and $\pi^*$,
this is equivalent to
$d_i : \pi^* (H_i \tensor H_{i+1}^*) \ra \O_U(1)$, 
which gives us
$d_i : \pi^* (H_i) \tensor\O_U(i+1+k)  \ra \pi^*(H_{i+1})\tensor\O_U(i+2+k)$.

The complex $\mathcal{C}(-r)$ of sheaves on $\P(U)$ has a commutative, homotopy-associative differential graded algebra structure from the commutative, homotopy-associative differential graded 
algebra structure on the Eagon-Northcott complex (which every resolution of a cyclic module has \cite[Proposition 1.1]{BE}), and the complex  $\mathcal{C}(-r+1)$ is a differential graded module for
$\mathcal{C}(-r)$. 
Now we make an important calculation about the cohomology of $\mathcal{C}(-r)$ and $\mathcal{C}(-r+1)$.

\begin{theorem}\label{T:cohcompute}
Let $p\in V \tesnor U\tesnor W$, where $V,U,W$ are locally free $\OS$-modules of ranks
$r$, $r_U$,  and $r_W$ respectively.  
Assume that $r_W\geq r\geq 2$ and that we are in one of the following cases
\begin{enumerate}
 \item $r=2$ and $r_U\geq r_W$
\item $r_U=2$ and $r=r_W$.
\end{enumerate} 
If $k=-r$ or $k=-r+1$, then
$R\pi_*\mathcal{C}(k)$ has no cohomology in any degree except 0.
\end{theorem}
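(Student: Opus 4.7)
The plan is to compute $R\pi_*\mathcal{C}(k)$ via the standard hypercohomology spectral sequence
$$E_1^{p,q}=R^q\pi_*\mathcal{C}^p(k)\Longrightarrow R^{p+q}\pi_*\mathcal{C}(k),$$
and to show that every nonzero term on the $E_1$-page sits on the antidiagonal $p+q=0$. Because the differentials $d_r\colon E_r^{p,q}\to E_r^{p+r,q-r+1}$ preserve total degree, this at once forces $R^i\pi_*\mathcal{C}(k)=0$ for $i\neq 0$, regardless of whether those differentials happen to vanish.

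The input is Serre's computation applied termwise. Each $\mathcal{C}^p(k)$ is of the form $\pi^*F_p\tensor\O(j_p)$ with $j_0=r+k$ and $j_p=p+1+k$ for $p\leq-1$, so by the projection formula
\begin{align*}
R^0\pi_*\mathcal{C}^p(k)&=F_p\tensor\Sym^{j_p}U\quad\text{if }j_p\geq0,\\
R^{r_U-1}\pi_*\mathcal{C}^p(k)&=F_p\tensor\Sym^{-j_p-r_U}U^*\tensor\wedge^{r_U}U^*\quad\text{if }j_p\leq-r_U,
\end{align*}
and all other $R^q\pi_*\mathcal{C}^p(k)$ vanish. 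So one has to show that each $p$ either falls outside both ranges (so every row is zero), or else falls in exactly one of them with matching total degree.

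In case (1), $r=2$ and $r_U\geq r_W$. The twist $j_0\in\{0,1\}$ contributes at $(0,0)$. For $p\leq-1$ the twists form $\{-2,-3,\dots,-r_W\}$ when $k=-r$, and $\{-1,-2,\dots,-r_W+1\}$ when $k=-r+1$; both sets lie in $[-r_U+1,-1]$ and hence in the Serre vanishing range, except for the single boundary case $k=-r$ with $r_U=r_W$. In that boundary case the bottom twist equals $-r_U$, producing an extra entry at $(p,q)=(-(r_U-1),r_U-1)$ with $p+q=0$. In case (2), $r_U=2$ and $r=r_W$, so the complex has only two terms at $p\in\{-1,0\}$. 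The twist $j_0\in\{0,1\}$ contributes at $(0,0)$, and the twist $j_{-1}\in\{-r,-r+1\}$ contributes at most at $(-1,1)$ (whenever $j_{-1}\leq-r_U=-2$), again on the antidiagonal.

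The main obstacle is bookkeeping rather than any deep difficulty: one must check carefully where each twist $j_p$ lands relative to the Serre vanishing strip $-r_U+1\leq j\leq-1$. The two rank hypotheses are exactly what is needed to keep every intermediate twist inside this strip, so that only the extremal twists at $p=0$ and at the bottom of the complex can survive, and those automatically lie on $p+q=0$. Once this is established, the spectral sequence collapses onto a single antidiagonal and the vanishing of $R^i\pi_*\mathcal{C}(k)$ for $i\neq 0$ follows immediately.
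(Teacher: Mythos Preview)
Your proof is correct and follows essentially the same approach as the paper's. Both arguments reduce, via the projection formula and Serre's computation of $R^q\pi_*\O(j)$ on $\P(U)$, to checking that each term $\mathcal{C}^p(k)$ contributes to $R\pi_*$ only in total degree $0$; you organize this via the hypercohomology spectral sequence and the antidiagonal $p+q=0$, while the paper phrases the same computation as ``$R^j\pi_*$ of the $i$th term in the $i$th place'' and checks it vanishes for $j\neq 0$. Your case analysis and the identification of the single boundary contribution at $(p,q)=(-(r_U-1),r_U-1)$ when $k=-r$ and $r_U=r_W$ match the paper's exactly.
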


\begin{proof}

Let $j\ne 0$, and we will compute that each term of the complex $\mathcal{C}(k)$ has trivial $R^j \pi_*$.
By the projection formula, we can ignore the term that is pulled back from $S$.  We have
$R^j \pi_*$ of the $i$th term $i\leq -1$ of  $\mathcal{C}(k)$, in the $i$th place, is
$R^{j-i} \pi_*$ of $\mathcal{C}^i(k)$ viewed as a complex in the $0$th place.
We have that $R^{j-i} \pi_* \O(i+1+k)=0$ unless either 1) $j=i$ and $i+1+k\geq 0$
or 2) $j-i=r_U-1$ and $i+1+k\leq -r_U$.  
Since $i+1\leq 0$ and $k\leq -r+1\leq -1$, we can never have $i+1+k\geq 0$.
We consider the two assumptions of the theorem in cases.
\begin{enumerate}
 \item {\bf Case I: $r=2$ and $r_U\geq r_W$.}  In this case, we have
$i+1+k\geq -(r_W-r)+k\geq  -(r_W-r)-r\geq -r_U$ and thus we can only have
$i+1+k\leq -r_U$ if $i=-(r_W-r)-1$, and $k=-r$, and $r_U=r_W$.
However, that implies that $i=-r_U+1$ and thus $j=0$.
\item {\bf Case II: $r_U=2$ and $r=r_W$.}  In this case, we only are considering $i=-1$, and thus
 $j-i=r_U-1$ implies $j=0$.
\end{enumerate}
We now need to consider $R^j \pi_*$ of the $0$th term of $\mathcal{C}(k)$ (for $j\ne 0$).
We have that $R^{j} \pi_* \O(r+k)=0$ unless 1)$j=0$ and $r+k\geq 0$ or
2)$j=r_U-1$ and $r+k\leq -r_U$.  However, we are assuming $j\ne0$ and $r+k\geq 0$, and thus this can never happen.
Thus we conclude that for $k=-r$ and $k=-r+1$, under our assumptions about $r_U,r,$ and $r_W$,
the complex $\mathcal{C}(k)$ has no cohomology in any degree except 0.
\end{proof}

\begin{corollary}
 Thus $\R_{\wedge^r_U p}=H^0 R\pi_* \mathcal{C}(-r)$ is a sheaf of algebras on $S$, and $\I_{\wedge^r_U p}=H^0 R\pi_* \mathcal{C}(-r+1)$ 
is a sheaf of $\R$-modules on $S$. 
\end{corollary}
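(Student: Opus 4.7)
The approach is to transfer the commutative, homotopy-associative differential graded algebra structure on $\mathcal{C}(-r)$ and the dg-module structure of $\mathcal{C}(-r+1)$ over $\mathcal{C}(-r)$ (both noted immediately before Theorem~\ref{T:cohcompute}) through the functor $H^0R\pi_*$, using the vanishing supplied by Theorem~\ref{T:cohcompute} to ensure that no higher homotopy survives to obstruct the algebra and module axioms.

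First, I would invoke Theorem~\ref{T:cohcompute} to conclude that $R\pi_*\mathcal{C}(-r)$ and $R\pi_*\mathcal{C}(-r+1)$ are each quasi-isomorphic to their zeroth cohomology sheaf placed in degree zero. Thus $\R_{\wedge^r_U p}$ represents $R\pi_*\mathcal{C}(-r)$ in the derived category $D(\OS)$, and similarly $\I_{\wedge^r_U p}$ represents $R\pi_*\mathcal{C}(-r+1)$.

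Next, the multiplication $\mathcal{C}(-r) \tensor \mathcal{C}(-r) \ra \mathcal{C}(-r)$ and the module action $\mathcal{C}(-r) \tensor \mathcal{C}(-r+1) \ra \mathcal{C}(-r+1)$, composed with the natural lax-monoidal maps for $R\pi_*$, yield morphisms $R\pi_*\mathcal{C}(-r) \tensor^{L} R\pi_*\mathcal{C}(-r) \ra R\pi_*\mathcal{C}(-r)$ and $R\pi_*\mathcal{C}(-r) \tensor^{L} R\pi_*\mathcal{C}(-r+1) \ra R\pi_*\mathcal{C}(-r+1)$ in $D(\OS)$. Applying $H^0$ produces the desired structure maps $\R \tensor \R \ra \R$ and $\R \tensor \I \ra \I$. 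The associativity, commutativity, unit, and module axioms follow from their chain-level homotopy counterparts: any obstruction to strictness would land in $H^i R\pi_*\mathcal{C}(k)$ for $k \in \{-r,-r+1\}$ and some $i > 0$, and all such groups vanish by Theorem~\ref{T:cohcompute}.

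The step I expect to be most delicate is making precise the passage from the commutative homotopy-associative dga structure to a strict algebra structure on hypercohomology. In general one only inherits an $A_\infty$-algebra on cohomology, but when the hypercohomology is concentrated in a single degree all higher products $m_n$ with $n \geq 3$ are forced to take values in vanishing cohomology groups, so the $A_\infty$-structure collapses to a strict commutative $\OS$-algebra (and analogously a strict $\R$-module on $\I$). Once this conceptual point is set up, Theorem~\ref{T:cohcompute} does the essential work and the remainder is routine.
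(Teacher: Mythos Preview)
Your proposal is correct and follows essentially the same approach as the paper: both argue that since Theorem~\ref{T:cohcompute} forces $R\pi_*\mathcal{C}(-r)$ to be concentrated in degree $0$, the homotopy-associative dga structure on $\mathcal{C}(-r)$ descends to a strict associative $\OS$-algebra structure on $H^0R\pi_*\mathcal{C}(-r)$, and likewise for the module. Your write-up is more explicit about the mechanism (lax-monoidal structure of $R\pi_*$, collapse of the $A_\infty$-structure), whereas the paper compresses this into the single observation that a complex concentrated in one degree has no nontrivial homotopies.
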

\begin{proof}
Since $R\pi_* \mathcal{C}(-r)$ is equivalent to a single sheaf in degree 0, it has no non-trivial homotopies.  Thus
$H^0 R\pi_* \mathcal{C}(-r)$ has an $\OS$-algebra structure that is not just homotopy-associative but in fact associative.
Since $\mathcal{C}(-r+1)$ is a module for $\mathcal{C}(-r)$, we have that $\I_{\wedge^r_U p}=H^0 R\pi_* \mathcal{C}(-r+1)$ 
is an $H^0 R\pi_* \mathcal{C}(-r)$-module.
\end{proof}
 We can also view the construction of $\R_{\wedge^r_U p}$ 
as taking the pushforward of the regular functions on the dg-scheme given by our resolution
of $\O/\im (\wedge^r_U p)$,
instead of on the scheme cut out by $\wedge^r_U p$.

When $p$ is the universal tensor (of any size), then the Eagon-Northcott complex, and thus $\mathcal{C}(k)$,
is exact at every spot except the 0th.  Thus, when $p$ is the universal tensor, $\mathcal{C}(k)$ is quasi-isomorphic
to $\O(k)/\im(\wedge^r_U p)$. The sheaf $\O(k)/\im(\wedge^r_U p)$ is supported on the scheme defined by the $r$-by-$r$ minors in $\im(\wedge^r_U p)$,
and is isomorphic on that scheme to the pullback of $\O(k)$ from $\P(U)$.
Thus, when $\mathcal{C}$ is the universal tensor in $\Lambda^{2}\tensor \Lambda^{n}\tensor \Lambda^{n}$, we have that $\R_{\wedge^r_U \mathcal{C}}$ is the sheaf of rings given by the global sections
$\Gamma(\O_{T_{\mathcal{C}(y)}})$ (as defined in Section~\ref{SS:geomZ}), and $\I_{\wedge^r_U \mathcal{C}}$ is the $\R_{\wedge^r_U \mathcal{C}}$-module given by the global sections
$\Gamma(\O_{T_{\mathcal{C}(y)}}(1))$.

Theorem~\ref{T:cohcompute} also allows us to see that the constructions of $\R_{\wedge^r_U p}$ and $\I_{\wedge^r_U p}$ commute with base change on $S$

\begin{corollary}\label{C:BC}
Let $p\in  V\tesnor U\tesnor W$, where $U,V,W$ are locally free $\OS$-modules of ranks
$r_U$, $r$, and $r_W$ respectively.  
Assume that $r_W\geq r\geq 2$ and that we are in one of the following cases
\begin{enumerate}
 \item $r=2$ and $r_U\geq r_W$
\item $r_U=2$ and $r=r_W$.
\end{enumerate} 
Then the constructions of $\R_{\wedge^r_U p}$ and $\I_{\wedge^r_U p}$ commute with base change.
More precisely, let
$\phi : S' \ra S$ be a map of schemes.
Let $p'\in \phi^*U \tesnor \phi^*V\tesnor \phi^*W$ be the pullback of $p$.  Then the natural
map from cohomology
$$\R_{\wedge^r_U p} \tensor_{\O_S} \O_{S'} \ra \R_{\wedge^r_{\phi^*U} p'}$$ is an isomorphism of $\O_{S'}$-algebras.
Also, the 
natural
map from cohomology
$$\I_{\wedge^r_U p} \tensor_{\O_S} \O_{S'} \ra \I_{\wedge^r_{\phi^*U} p'}$$ is an isomorphism of $\R_{\wedge^r_{\phi^*U} p'}$-modules
(where the $\R_{\wedge^r_{\phi^*U} p'}$-module structure on $\I_{\wedge^r_U p} \tensor_{\O_S} \O_{S'}$ comes from
the $(\R_{\wedge^r_U p} \tensor_{\O_S} \O_{S'})$-module structure.
 \end{corollary}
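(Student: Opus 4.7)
The plan is to deduce the corollary from a term-by-term base change computation together with the cohomology vanishing in Theorem~\ref{T:cohcompute}. First, I would observe that the formation of $\mathcal{C}(k)$ is functorial in the data $(S,U,V,W,p)$: each term is of the shape $\pi^*F_i \tensor_{\OS} \O(m_i)$ for some locally free $\OS$-module $F_i$ built from $U,V,W$ by polynomial functors, and the differentials are built from $p$ by the Eagon--Northcott construction together with the adjunction $\pi_* \dashv \pi^*$. Thus, writing $\phi'\colon \P(\phi^*U)\ra\P(U)$ for the map induced by $\phi$, there is a canonical identification $\phi'^*\mathcal{C}(k) \isom \mathcal{C}'(k)$, where $\mathcal{C}'(k)$ denotes the complex associated to $p'$.

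Next, for each individual term I would use the projection formula together with the standard computation of $R\pi_*\O(m)$ for the projective bundle $\pi\colon \P(U)\ra S$ (each $R^q\pi_*\O(m)$ is either zero, a symmetric power of $U$, or such a power twisted by $\wedge^{r_U}U^*$, and in all cases is a polynomial functor of $U$) to conclude that $R\pi_*\mathcal{C}^i(k)$ commutes with arbitrary base change for each $i$. In fact, the proof of Theorem~\ref{T:cohcompute} shows the stronger statement that $R^q\pi_*\mathcal{C}^i(k)=0$ unless $i+q=0$. Hence the hypercohomology spectral sequence
\[
E_1^{i,q} = R^q\pi_*\mathcal{C}^i(k) \;\Longrightarrow\; R^{i+q}\pi_*\mathcal{C}(k)
\]
is supported on the antidiagonal $i+q=0$, degenerates at $E_1$, and realises $\R_{\wedge^r_U p}$ (respectively $\I_{\wedge^r_U p}$) as an iterated extension whose successive quotients are precisely the sheaves $R^{-i}\pi_*\mathcal{C}^i(k)$.

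The corresponding spectral sequence for $p'$ has the same shape (the rank hypotheses of Theorem~\ref{T:cohcompute} are preserved under base change), and the canonical base change morphism $\phi^* R\pi_*\mathcal{C}(k) \ra R\pi'_*\mathcal{C}'(k)$ is compatible with the two spectral sequences. Since the base change map is an isomorphism on every graded piece by the term-by-term computation above, I would conclude that it is an isomorphism of $\O_{S'}$-modules. The compatibility with the algebra structure on $\R$ and the $\R$-module structure on $\I$ then follows because those structures are inherited from the (homotopy-associative) differential graded algebra and module structures on $\mathcal{C}(-r)$ and $\mathcal{C}(-r+1)$, which are manifestly functorial in $p$, so that $\phi'^*$ and hence the derived base change map respect them.

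The main technical obstacle I anticipate is the rigorous passage from term-wise to total base change. The cleanest route is to invoke the derived projection formula: since $\mathcal{C}(k)$ is a bounded complex of sheaves flat over $S$, one has a canonical quasi-isomorphism $L\phi^* R\pi_*\mathcal{C}(k) \simeq R\pi'_* \phi'^*\mathcal{C}(k)$ in the derived category. Because both sides are concentrated in degree $0$ by Theorem~\ref{T:cohcompute} (applied to $p$ and to $p'$), the derived pullback $L\phi^*$ reduces to ordinary $\phi^*$ on $R^0\pi_*\mathcal{C}(k)$ (which is locally free by the argument above, in particular flat), and the statement of the corollary follows.
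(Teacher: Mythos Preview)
Your proposal is correct and follows essentially the same strategy as the paper: both arguments reduce to Theorem~\ref{T:cohcompute} (cohomology concentrated in degree $0$) together with local freeness of $H^0R\pi_*\mathcal{C}(k)$, and then invoke a standard cohomology-and-base-change principle. The only difference is packaging: the paper simply observes that all $H^iR\pi_*\mathcal{C}(k)$ are flat (zero for $i\neq 0$, locally free for $i=0$ by Theorem~\ref{T:modstruc}) and cites \cite[Corollaire~6.9.9]{EGA3} directly, whereas you unpack this via the hypercohomology spectral sequence and the derived base change isomorphism. Your route has the mild advantage of deriving the local freeness of $H^0$ along the way (from the antidiagonal spectral sequence and the local freeness of each $R^{-i}\pi_*\mathcal{C}^i(k)$), rather than forward-referencing Theorem~\ref{T:modstruc}; the paper's route is shorter.
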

\begin{proof}
The key to this proof is to compute all cohomology of the pushforward 
of the complex $\mathcal{C}(k)$ for $k=-r$ and $k=-r+1$.
We already know from Theorem~\ref{T:cohcompute} that there is
only cohomology in degree 0.
Theorem~\ref{T:modstruc} will tell us that
$H^0 R \pi_* \mathcal{C}(k)$ is locally free for $k=-r$ and $k=-r+1$.
 Thus since all $H^{i}R\pi_*(\mathcal{C}(k))$ are flat,
by \cite[Corollaire 6.9.9]{EGA3}, we have that cohomology and base change commute.
Note that the base change morphisms respect the algebra and module structures on $\R_{\wedge^r_U p}$ and $\I_{\wedge^r_U p}$,
and thus since they are isomorphisms, they are algebra and module isomorphisms.
\end{proof}

\subsection{$\OS$-module structure of $\R_{\wedge^r_U p}$ and $\I_{\wedge^r_U p}$}
Now we consider a base scheme $S$, and $V,U,W$ locally free $\OS$-modules of ranks $2$, $n$, and $n$, respectively.
In this case, we construct the $\OS$-algebra and module pairs $\R_{\wedge^n_V p}$ and $\I_{\wedge^n_V p}$,
$\R_{\wedge^2_U p}$ and $\I_{\wedge^2_U p}$, and $\R_{\wedge^2_W p}$ and $\I_{\wedge^2_W p}$.
We will now find the $\OS$-module structure of all of these constructions.  
This has already been done for the $\wedge^n_V p$ construction in \cite[Section 3.1]{Bf}, and so we consider here
the $\wedge^2_U p$ (as the $\wedge^2_W p$ constructions will follow identically).

\begin{theorem}\label{T:modstruc}
We have an exact sequence of $\OS$-modules
$$
0 \ra \OS \ra \R_{\wedge^2_U p} \ra (\Sym^{n-2} V)^*\tesnor \wt V^* \tensor \wedge^n W^*\tensor \wedge^n U^* \ra 0,
$$
and an $\OS$-module isomorphism $\I_{\wedge^2_U p}\isom U$.
\end{theorem}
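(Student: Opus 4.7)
The plan is to compute $\R_{\wedge^2_U p} = H^0 R\pi_* \mathcal{C}(-2)$ and $\I_{\wedge^2_U p} = H^0 R\pi_*\mathcal{C}(-1)$ using the first hypercohomology spectral sequence for $\pi\colon \P(U)\to S$,
\[ E_1^{p,q} = R^q\pi_*(\mathcal{C}^p(k)) \Rightarrow R^{p+q}\pi_*\mathcal{C}(k). \]
By Theorem~\ref{T:cohcompute} (applied in Case I, with $r=2$, $r_U=n$, $r_W=n$) the abutment is concentrated in total degree zero, so the spectral sequence simply computes a filtration on $H^0 R\pi_*\mathcal{C}(k)$.

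First I would read off the surviving $E_1$-entries for $\mathcal{C}(-2)$, using the projection formula together with the standard cohomology of a $\P^{n-1}$-bundle: $\pi_*\O(\ell)=\Sym^\ell U$ for $\ell\ge 0$, $R^{n-1}\pi_*\O(\ell)=(\Sym^{-\ell-n}U)^*\tensor \wedge^n U^*$ for $\ell\le -n$, and all other $R^q\pi_*\O(\ell)$ vanish.  The term $\mathcal{C}^0(-2)=\O$ contributes $\OS$ at $(p,q)=(0,0)$; the term $\mathcal{C}^{-1}(-2)$ involves $\O(-2)$, which has trivial $R\pi_*$ for $n\ge 3$; and for $-n+1\le p\le -2$ the term $\mathcal{C}^p(-2)$ involves $\O(p-1)$ with $p-1\in[-n,-3]$, so only the extreme term $p=-n+1$ survives, giving $R^{n-1}\pi_*\O(-n)=\wedge^n U^*$.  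Thus the only nonzero entries on the $p+q=0$ diagonal are
\[ E_1^{0,0}=\OS \qquad\text{and}\qquad E_1^{-n+1,\,n-1}= \wedge^2 V^*\tensor K_{n-1}(2,W^*,V)\tensor \wedge^n U^*. \]
A direct check shows that every differential $d_r$ incident to either of these two survivors lands in (or comes from) a position where $E_1$ vanishes, so $E_\infty=E_1$ on this diagonal and the resulting filtration yields the short exact sequence
\[ 0 \to \OS \to \R_{\wedge^2_U p} \to \wedge^2 V^*\tensor K_{n-1}(2,W^*,V)\tensor \wedge^n U^* \to 0, \]
whose left arrow is forced to be the unit of the $\OS$-algebra structure.

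Next I would identify $K_{n-1}(2,W^*,V)\isom \wedge^n W^*\tensor(\Sym^{n-2}V)^*$ as the top term of the Eagon–Northcott complex.  The normalization $K_1(2,W^*,V)=\wedge^2 W^*$, read off by comparison with the explicit formula for $\mathcal{C}^{-1}(k)$, forces $K_j(2,W^*,V)=\wedge^{j+1}W^*\tensor(\Sym^{j-1}V)^*$; evaluated at $j=n-1$ this is the claimed factor, so the cokernel above matches the one stated in the theorem.  For $\I_{\wedge^2_U p}=H^0 R\pi_*\mathcal{C}(-1)$ the identical analysis applies: $\mathcal{C}^0(-1)=\O(1)$ contributes $U=\pi_*\O(1)$ at $(0,0)$, while every lower term $\mathcal{C}^p(-1)$ involves $\O(\ell)$ with $-n+1\le\ell\le -1$ and so has $R^q\pi_*=0$ in all $q$.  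The $p+q=0$ diagonal of $E_1$ therefore has the single entry $U$, proving $\I_{\wedge^2_U p}\isom U$.

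The main obstacle is the bookkeeping for the top Eagon–Northcott term $K_{n-1}(2,W^*,V)$: one must keep straight the divided-power versus symmetric-power conventions (they agree after dualizing, giving $(\Sym^{n-2}V)^*=D_{n-2}(V)$) and confirm the claimed normalization by matching the Eagon–Northcott recipe against the explicit shape of $\mathcal{C}^{-1}(k)$ provided in Wood's construction.  Everything else is forced by Theorem~\ref{T:cohcompute} and the projective-bundle vanishing statements.  The degenerate case $n=2$, where the complex has only the two terms $\mathcal{C}^{-1}(k),\mathcal{C}^0(k)$ and $(\Sym^{n-2}V)^*=\OS$, is verified separately by the same procedure.
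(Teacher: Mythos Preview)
Your argument is correct and essentially the same as the paper's: both compute $H^0 R\pi_*\mathcal{C}(k)$ by noting (via the projection formula and projective-bundle cohomology) that only the terms $\mathcal{C}^0(k)$ and $\mathcal{C}^{-n+1}(k)$ contribute, and then invoking Theorem~\ref{T:cohcompute} so that the resulting two-step filtration gives the stated short exact sequence (with only $U$ surviving for $k=-1$). The paper phrases this as ``the long exact sequence of cohomology'' rather than as the hypercohomology spectral sequence, but these are the same mechanism here; your write-up is slightly more explicit about the degeneration of the differentials, the Eagon--Northcott identification $K_{n-1}(2,W^*,V)\cong(\Sym^{n-2}V)^*\otimes\wedge^n W^*$, and the $n=2$ case.
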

\begin{proof}
From Theorem~\ref{T:cohcompute}, we know that
for $k=-r$ and $k=-r+1$
$\mathcal{C}(k)$ has trivial $H^j R\pi_*$ for all $j\ne 0$, and all components $C(k)^{i}_i$ of the complex  
(the $i$th term of $C(k)$ sitting as a complex in the $i$th place) have $H^0 R\pi_*(C(k)^{i}_i)=0$ except for possibly the
two extremal terms $i=0$ and $i=-n+1$.  Thus, by the long exact sequence of cohomology, we have the exact sequences
$$
0 \ra H^0 R\pi_* (\O) \ra \R_{\wedge^2_U p} \ra H^{n-1} R\pi_* (\pi^*(\wedge^2 V^* \tesnor K_{-n+1}(2,W^*,V)) \tensor_\OS \O(-n))\ra 0
$$
and
$$
0 \ra H^0 R\pi_* (\O) \ra \R_{\wedge^2_U p} \ra H^{n-1} R\pi_* (\pi^*(\wedge^2 V^* \tesnor K_{-n+1}(2,W^*,V)) \tensor_\OS \O(-n+1))\ra 0.
$$
We see that 
\begin{align*}
&H^{n-1} R\pi_* (\pi^*(\wedge^2 V^* \tesnor K_{-n+1}(2,W^*,V)) \tensor_\OS \O(-n))\\
=&
\wedge^2 V^* \tesnor K_{-n+1}(2,W^*,V) \tensor_\OS H^{n-1} R\pi_* ( \O(-n))\\
=&
\wedge^2 V^* \tesnor K_{-n+1}(2,W^*,V)  \tensor_\OS \wn U^*.
\end{align*}
Since $K_{-n+1}(2,W^*,V)=(\Sym^{n-2}V)^*\tensor \wedge^n W^*$, we obtain the exact sequence desired.
Also, note that $H^{n-1} R\pi_* (\O(-n+1))=0$.
\end{proof}

We can see that the three $\OS$-algebras constructed from a tensor $p\in V \tensor U\tensor W$ are isomorphic.
In the case that $V,U$, and $W$ are free, then $p$ is a pull-back from the universal tensor, in which case we know the algebras are isomorphic from Theorem~\ref{T:samealg}.  If one checks that the algebra isomorphism given by 
Theorem~\ref{T:samealg} is canonical, that it doesn't depend on the choice of bases of $V$, $U$, and $W$, 
then that would show that the three $\OS$-algebras constructed from a tensor $p\in V \tensor U\tensor W$ are all isomorphic
because locally, $V$, $U$ and $W$ are free, and if the isomorphisms between algebras do not depend on the choice of bases, they
will agree on overlaps.  In fact, the constructions made in Lemma~\ref{L:schemeisom} to give the 
isomorphism of $S'$-schemes $T_{\mathcal{C}(y)}'\isom T_{\mathcal{C}(x)}'$ are all given by minors of matrices and in fact
 are canonical.

Finally, we can see that the construction of the $\R_{\wedge^2_U p}$-module
$\I_{\wedge^2_U p}^*$ from this section  agrees with the construction of a module structure on $U^*$  given in the proof of Theorem~\ref{T:ARB}.  By Theorem~\ref{T:modstruc} $\I_{\wedge^2_U p}^*$ is also a $\R_{\wedge^2_U p}$-module structure on $U^*$.  Thus, we just need to check that the $\R_{\wedge^2_U p}$ actions agree, which we can check locally.  By Corollary~\ref{C:BC}, the construction of $\I_{\wedge^2_U p}^*$ commutes with base change,
and the construction of a module structure on $U^*$  from the proof of Theorem~\ref{T:ARB} commutes with base change by construction.  Thus it suffices to check that the module structures agree in the case of the universal tensor, which was done in Theorem~\ref{T:modag}.

\section*{Acknowledgements}
The author would like to thank Manjul Bhargava for asking the questions that inspired this research, guidance along the way, and helpful feedback both on the ideas
and the exposition in this paper.  
  This work was done as part of the author's Ph.D. thesis at Princeton University, and during the work she was supported by an NSF Graduate Fellowship, an NDSEG Fellowship, an AAUW Dissertation Fellowship, and a Josephine De K\'{a}rm\'{a}n Fellowship.  This paper was prepared for submission while the author was supported by an American Institute of Mathematics Five-Year Fellowship and National Science Foundation grant DMS-1001083.


\begin{thebibliography}{99} 

\bibitem{HCL1} M. Bhargava, Higher composition laws. I. A new view on Gauss composition, and quadratic generalizations, Ann. of Math. (2) {\bf 159} (2004), no.~1, 217--250.

\bibitem{HCL2} M. Bhargava, Higher composition laws. II. On cubic analogues of Gauss composition, Ann. of Math. (2) {\bf 159} (2004), no.~2, 865--886.

\bibitem{HCL3} M. Bhargava, Higher composition laws. III. The parametrization of quartic rings, Ann. of Math. (2) {\bf 159} (2004), no.~3, 1329--1360.



\bibitem{B} N. Bourbaki, {\it Elements of mathematics. Commutative algebra}, Translated from the French, Hermann, Paris, 1972


\bibitem{BE} D. A. Buchsbaum\ and\ D. Eisenbud, Algebra structures for finite free resolutions, and some structure theorems for ideals of codimension $3$, Amer. J. Math. {\bf 99} (1977), no.~3, 447--485.

\bibitem{DD}
R. Dedekind, and P.G.L. Dirichlet, {\it Vorlesungen \"{u}ber Zahlentheorie,} 1863.


\bibitem{PrimeSplit}
I. Del Corso, R. Dvornicich\ and\ D. Simon, Decomposition of primes in non-maximal orders, Acta Arith. {\bf 120} (2005), no.~3, 231--244.

\bibitem{DF}B. N. Delone\ and\ D. K. Faddeev, {\it The theory of irrationalities of the third degree}, Amer. Math. Soc., Providence, R.I., 1964. (translation of B. N. Delone\ and\ D. K. Faddeev, Theory of Irrationalities of Third Degree, Acad. Sci. URSS. Trav. Inst. Math. Stekloff, {\bf 11} (1940).)

\bibitem{CA} D. Eisenbud, {\it Commutative algebra}, Springer, New York, 1995. 

\bibitem{GGS} W. T. Gan, B. Gross\ and\ G. Savin, Fourier coefficients of modular forms on $G\sb 2$, Duke Math. J. {\bf 115} (2002), no.~1, 105--169.

\bibitem{EGA3}A. Grothendieck, \'El\'ements de g\'eom\'etrie alg\'ebrique. III. \'Etude cohomologique des faisceaux coh\'erents. II, Inst. Hautes \'Etudes Sci. Publ. Math. No. 17 (1963), 91 pp.

\bibitem{HW}
K. Hardy\ and\ K. S. Williams, The class number of pairs of positive-definite binary quadratic forms, Acta Arith. {\bf 52} (1989), no.~2, 103--117.

\bibitem{Liu}
Q. Liu, {\it Algebraic geometry and arithmetic curves}, Translated from the French by Reinie Erne, Oxford Univ. Press, Oxford, 2002.


\bibitem{Morales2} J. Morales, The classification of pairs of binary quadratic forms, Acta Arith. {\bf 59} (1991), no.~2, 105--121.


\bibitem{Morales} J. Morales, On some invariants for systems of quadratic forms over the integers, J. Reine Angew. Math. {\bf 426} (1992), 107--116.

\bibitem{Naka}
J. Nakagawa, Binary forms and orders of algebraic number fields, Invent. Math. {\bf 97} (1989), no.~2, 219--235. 


\bibitem{SimonIdeal}
D. Simon, La classe invariante d'une forme binaire, C. R. Math. Acad. Sci. Paris {\bf 336} (2003), no.~1, 7--10.


\bibitem{Simon}
D. Simon, The index of nonmonic polynomials, Indag. Math. (N.S.) {\bf 12} (2001), no.~4, 505--517.


\bibitem{Bf} M. M. Wood, Rings and ideals parametrized by binary $n$-ic forms, \emph{Moduli Spaces for Rings and Ideals}, Ph.D. thesis, Princeton University, Chapter 3,
to appear in J. Lon. Math. Soc.

\bibitem{BinQuartic} M.M. Wood, Quartic rings associated to binary quartic forms, \emph{Moduli Spaces for Rings and Ideals}, Ph.D. thesis, Princeton University, Chapter 4,
 submitted for publication.

\bibitem{Quartic} M.M. Wood, Paramertizing quartic algebras over an arbitrary base, \emph{Moduli Spaces for Rings and Ideals}, Ph.D. thesis, Princeton University, Chapter 6,
 submitted for publication.
\end{thebibliography}
\end{document}